\renewcommand{\geq}{\geqslant}
\renewcommand{\leq}{\leqslant}
\def\RR{\mathbb{R}}
\def\NN{\mathbb{N}}
\def\PP{\mathbb{P}}
\def\EE{\mathbb{E}}
\newcommand{\bP}{\mathbf{P}}
\newcommand{\bE}{\mathbf{E}}
\newcommand{\ind}{\mathds{1}}
\newcommand{\gb}{\beta}
\newcommand{\gep}{\varepsilon}       
\newcommand{\gp}{\varphi}
\newcommand{\go}{\omega}
\newcommand{\cE}{\mathcal{E}}
\newcommand{\cR}{\mathcal{R}}
\newcommand{\cC}{\mathcal{C}}
\newcommand{\p}{\mathtt{p}}
\newcommand{\q}{\mathtt{q}}
\newcommand{\dd}{\mathrm{d}}
\newcommand{\inftwo}[2]{\inf_{\substack{#1 \\ #2}}} 
\newcommand{\sumtwo}[2]{\sum_{\substack{#1 \\ #2}}} 
\newcommand{\strength}{\mathrm{Strength}}
\theoremstyle{plain}
\newtheorem{theorem}{Theorem}[section]
\newtheorem{lemma}[theorem]{Lemma}
\newtheorem{proposition}[theorem]{Proposition}
\newtheorem{corollary}[theorem]{Corollary}
\newtheorem{claim}[theorem]{Claim}
\theoremstyle{definition}
\theoremstyle{remark}
\newtheorem{remark}[theorem]{Remark}
\definecolor{Orange}{HTML}{B26536}
\definecolor{Gris}{HTML}{808080}
\numberwithin{equation}{section}
\setlist{itemsep=1pt,topsep=2pt,parsep=1pt, leftmargin=1.5em}
\let\OLDthebibliography\thebibliography
\renewcommand\thebibliography[1]{
  \small
  \OLDthebibliography{#1}
  \setlength{\parskip}{0pt}
  \setlength{\itemsep}{3pt plus 0.3ex}
}
\begin{document}

\title{Non-linear conductances of Galton--Watson trees\\ and application to the (near) critical random cluster model}

\author[*]{Irene Ayuso Ventura}
\author[$\dagger$,$\ddagger$]{Quentin Berger}

\affil[*]{\footnotesize Durham University, UK.}
\affil[$\dagger$]{\footnotesize Laboratoire d'Analyse, G\'eom\'etrie et Applications, Universit\'e Sorbonne Paris Nord, France.}
\affil[$\ddagger$]{\footnotesize Institut Universitaire de France.}

\date{}

\maketitle

\begin{abstract}
\noindent 
In this article, we study concave recursions on trees, which appear widely in information theory through algorithms such as belief propagation, and in statistical mechanics through models on tree-like graphs, including the Ising model, percolation, and more generally, the random cluster model.
These tree recursions can, in fact, be compared with non-linear conductances, or $p$-conductances, between the root and the leaves of the tree.
In this article, we estimate the $p$-conductances of $T_n$, a supercritical Galton--Watson tree of depth~$n$, for any $p>1$, for a \textit{quenched} realization of $T_n$.
In particular, we find the sharp asymptotic behavior when~$n$ goes to infinity, which depends on whether the offspring distribution admits a finite moment of order~$q$, where $q=\frac{p}{p-1}$ is the conjugate exponent of~$p$.
We then apply our results to the random cluster model on~$T_n$ (with cluster weight parameter $\q\in (0,2]$ and wired boundary condition) providing sharp estimates on the probability that the root is connected to the leaves.
As an example, for the Ising model on~$T_n$ with plus boundary conditions on the leaves, we find that, at criticality, the quenched magnetization of the root decays like: (i)~$n^{-1/2}$ times an explicit tree-dependent constant if the offspring distribution admits a finite third moment; (ii)~$n^{-1/(\alpha-1)}$ if the offspring distribution has a heavy tail with exponent $\alpha \in (1,3)$.
\end{abstract}

{\small
\setcounter{tocdepth}{1}

\tableofcontents
}

%

\section{Introduction and main results}

We consider a super-critical branching process with reproduction law $\mu$.
We denote by $Z$ a generic random variable with law $\mu$ and we denote ${m:= \bE[Z]}$, that we consider to be finite.
We assume for simplicity that $\mu(0)=0$ so the tree is infinite (and has no leaves) and that $\mu(1)<1$ so the tree is non-degenerate (and $m>1$).

We denote by~$T$ the infinite tree associated with the super-critical branching process, and
for $n\in \NN$, we let $T_n$ be the subtree of depth $n$. 
We equip the tree $T_n$ with a set of resistances $R(e)$ on its edges; if the edge is $uv$ with $u$ the parent of $v$ (we will write $u\rightarrow v$), then we denote $R_v := R(e)$.
Our main objective is to estimate the $p$-conductance (or $p$-capacity) of~$T_n$, equipped with resistances $(R_v)_{v\in T_n}$. 
Such quantities arise naturally in the context of the random cluster model with cluster weight \(\q \in (0, 2]\) (including percolation and the Ising model) on a quenched Galton–Watson tree; we refer to Section~\ref{sec:RCM} below for more details (in particular, we also obtain a tree-recursion in the case $\q>2$, but it is not concave and cannot be related to a non-linear conductance).

Let us mention that the usual effective resistances (or conductances) of random trees have already been considered in~\cite{ABBL09,ChenHuLin}, with a specific choice of (random) resistances $(R_v)_{v\in T_n}$ which corresponds to a \textit{critical} case.
Our work can therefore be seen as a generalization of~\cite{ABBL09,ChenHuLin} to the case of $p$-resistances, with a wider range of (non-random) resistances; we improve some of their  results and present some applications, in particular to the (near) critical percolation or Ising model on a quenched tree.

More generally, we study (concave) recursions on a Galton--Watson tree,  which may arise in the context of statistical mechanics models on tree-like graphs (see~\cite{vdHStFlour} for an overview), or more broadly in belief propagation or population dynamics algorithms (see e.g.\ \cite{MeMo09} for an overview). 
These tree-recursions are often considered in a distributional sense rather than with a fixed tree, corresponding to an annealed setting for random graphs (\textit{i.e.}\ the randomness of the tree is part of the recursion); then, one of the main question is that of the convergence to the fixed point distribution, and we refer for instance to~\cite{FLOC23,MOC19} for recent (and general) results.
Our interest here is slightly different, since we study iterations on a \textit{quenched} (\textit{i.e.}\ fixed) Galton--Watson tree, focusing on the case where the distributional fixed point of the iteration is degenerate, equal to $0$: our main objective is then to estimate precisely the decay rate of the recursion towards zero.
As an application of our results, we derive sharp estimates for the connection (or survival) probability in the sub-critical and (near) critical random cluster model with cluster weight $\q\in (0,2]$ on a quenched Galton--Watson tree.

\subsection{Non-linear ($L^p$) resistive networks}

Let us consider a graph $G=(V,E)$ equipped with a set of (non-negative) resistances $(R(e))_{e\in E}$ on its \textit{non-directed} edges $e= \{x,y\}$ with $x,y\in V$. 
We will also need to consider \textit{directed} edges, that are given by ordered couples $\vec{xy}:=(x,y)$ with $\{x,y\}\in E$; we write $\vec{e} = \vec{xy}$ and $-\vec{e}= \vec{yx}$.
A general theory of non-linear resistances and {conductances} is by now well-developed, and are usually defined through discrete nonlinear potential theory, see for instance~\cite{Soardi06} for an overview.
Here, we focus on a specific non-linear case,  so-called ``$L^p$ resistive networks''.
We will give the definitions directly in terms of the $L^p$-Thomson's principle, since it is  the only tool we need for this article; we refer to \cite{CNS21} for a detailed review of $L^p$-resistances and conductances (or simply $p$-resistances and $p$-conductances), see in particular \cite[Thm.~2.13]{CNS21} for the $L^p$-Thomson's principle.

For $A, Z$ two disjoint subsets of $V$, we consider a \textit{flow} $\theta$ between $A$ (the source) and $Z$ (the sink), which is a \(\RR\)-valued function \(\theta\) on directed edges that verifies $\theta(-\vec{e}) =-\theta(\vec{e})$ and Kirchoff's node law: for any $x \in V \setminus (A\cup Z)$, $\sum_{y: y\sim x} \theta (\vec{xy}) =0$.
The strength of the flow is then defined as
\[
\strength(\theta) = \sumtwo{x \in A, y\notin A}{x\sim y} \theta ( \vec{xy})\,,
\]
and we say that $\theta$ is a flow from $A$ to $Z$ if $\strength(\theta)\geq 0$; we also say that $\theta$ is a \textit{unit} flow if $\strength(\theta)=1$.

For $p>1$, we define the $L^p$-energy (or simply $p$-energy) of a flow $\theta$ from $A$ to~$Z$ as 
\[
\cE_p(\theta) =  \sum_{e\in E}  R(e)^{\frac{1}{p-1}} |\theta(e)|^{\frac{p}{p-1}}  \,,
\]
where $|\theta(e)|$ does not depend on the orientation of the edge since $\theta(-\vec{e}) = -\theta(\vec{e})$.
Then, the $p$-resistance and $p$-conductance between $A$ and~$Z$ are defined, through Thomson's principle, as follows:
\begin{equation}
\label{def:RpCp}
\begin{split}
\cR_p(A \leftrightarrow Z) &:= \inftwo{\theta : A\to Z}{\strength(\theta)=1} \cE_p(\theta)^{p-1}  \\
\text{ and } \quad
\cC_p(A\leftrightarrow Z) & := \cR_p(A \leftrightarrow Z)^{-1} \,. 
\end{split}
\end{equation}
 
It is actually notationally convenient to introduce the conjugate exponent $q=\frac{p}{p-1}$ (\textit{i.e.}\ such that $\frac1p+\frac1q=1$), so that the Thomson's principle~\eqref{def:RpCp} can be rewritten as
\begin{equation}
\label{def:thomson}
\cR_p(A \leftrightarrow Z)^{s} = \inftwo{\theta : A\to Z}{\strength(\theta)=1} \hspace{1mm} \sum_{e\in E}  R(e)^{s} |\theta(e)|^{q}   \,, \qquad \text{ with }\ s:=q-1 = \frac{1}{p-1} \,. 
\end{equation}
Let us also stress that for $p=2$ (that is, $q=2$ and $s=1$), the $p$-resistance and $p$-conductance amount to the usual (linear) effective resistance and conductance.

\begin{remark}
\label{rem:seriesparallel} 
From the definition \eqref{def:RpCp}, we can easily deduce the Series and Parallel laws for  $L^p$ resistive networks: recalling that $s=\frac{1}{p-1}$, we can formally write them as
\[
\label{eq:seriesparallel}
\begin{split}
\cR_p\big(  \mathop{\rule{0.5cm}{1pt}}^{R_1} \mathop{\rule{0.5cm}{1pt}}^{R_2} \big)^{s}& = 
\cR_p\big(  \stackrel{R_1}{\rule{0.5cm}{1pt}} \big)^{ s} + \cR_p\big(  \stackrel{R_2}{\rule{0.5cm}{1pt}} \big)^{ s}  \,, \\
\cC_p\Big(   \begin{array}{c}\displaystyle \mathop{\rule{0.5cm}{1pt}}^{R_1} \\[-10pt]
\displaystyle\mathop{\rule{0.5cm}{1pt}}_{R_2} \end{array} \Big) 
& =  \cC_p\big( \mathop{\rule{0.5cm}{1pt}}^{R_1}\big) + \cC_p\big( \mathop{\rule{0.5cm}{1pt}}^{R_2} \big) \,.
\end{split}
\]
\end{remark}

\begin{remark}
General non-linear networks can be defined by replacing the function ${x\mapsto |x|^{p}}$ by a strictly convex function $\gp: x\mapsto \gp(x)$  (in this case, Thomson's principle becomes a bit harder to state); we refer to \cite{DMS90,Soardi06} and references therein.
Most of our results would also hold for such generalized conductances.
We have chosen to restrict ourselves to the case of \(p\)-conductances for simplicity, and because the applications we have in mind do not require more generality.
\end{remark}

\subsection{Concave recursions on trees}

\paragraph*{The conductance recursion}

It turns out that on trees, one can express the effective conductance between the root and the leaves $\cC_p(\rho \leftrightarrow \partial T_n)$ through a simple recursion, as follows.

For $u \in T_n$, let us consider the subtree $T_n(u)$ of all descendants of~$u$ inside~$T_n$.
We consider $\cC_p(u \leftrightarrow \partial T_n(u))$ the $p$-conductance of the subtree $T_n(u)$ as defined in~\eqref{def:RpCp}, equal by convention to \(+\infty\) if \(u\in \partial T_n\), and we denote 
\[
C_n^{(p)}(u) := R_u \cC_p(u \leftrightarrow \partial T_n(u))\,.
\]
Let us note that \(C_n^{(p)}(u)\) corresponds to the $p$-conductance of the subtree $T_n(u)$ between its root \(u\) and its leaves \(\partial T_n(u)\), when equipped with resistances $(\frac{R_v}{R_u})_{v \in T_n(u)}$. 
Then, the Series and Parallel laws from Remark~\ref{eq:seriesparallel} easily yield the following relation
\begin{equation}
\label{def:conductancerec}
C_n^{(p)}(u) = \sum_{v \leftarrow u} \frac{R_u}{R_v}  \frac{C_n^{(p)}(v)}{(1+C_n^{(p)}(v)^s)^{1/s}}\,, \quad\text{with }  s:=\frac{1}{p-1} >0 \,.
\end{equation}

\begin{remark}
The recursion~\eqref{def:conductancerec} appears in \cite[Lem.~3.1]{PemPer10}, but with $s=p-1$ instead of $s=\frac{1}{p-1}$. 
This is just a convention, which simply means that all results in~\cite{PemPer10} are stated with the conjugate exponent; in practice the $p$-capacity in~\cite{PemPer10}  (denoted by $\mathrm{cap}_p$) should refer to the $\frac{p}{p-1}$-conductance. 
Let us also mention that the term capacity is used in~\cite{PemPer10}, but in our context it turns out to be equivalent to the effective conductance between the root and the leaves thanks to the Dirichlet and Thomson principles (see~\cite[Sec.~2, Thms.~2.10 and 2.14]{CNS21} where this is spelled out in the context of \(L^p\) resistive networks).
%
However, in other contexts, \textit{capacity} might refer to the min-cut value, which typically differs from the conductance (except in trees).
To avoid any ambiguity, we restrict ourselves to the term conductances.
\end{remark}

\paragraph*{General recursions and comparison argument}

Consider the iteration on the rooted tree~$T_n$ given by
\begin{equation}
\label{def:concaverec0}
B_n(u) = \sum_{v \leftarrow u} f_v\big( B_n(v)\big)\,,
\end{equation}
where {$f_v: \RR_+\to \RR_+$ are non-negative functions indexed by $v\in T_n$};
recall also that $v \leftarrow u$ means that $v$ is a descendant of $u$ in $T_n$.
Note that this is very similar to the iteration~\eqref{def:conductancerec}, if one could take $f_v(x) = \frac{R_u}{R_v} g_s(x)$ with the function $g_s(x):=\frac{x}{(1+x^s)^{1/s}}$.

We will soon focus on the case where $f_v \equiv f$ does not depend on $v$, but let us keep the dependence on $v$ for now, to follow the footsteps of~\cite[\S1.3]{PemPer10}.
We will further work with functions \((f_v)_{v\in T_n}\) that are concave and bounded, and that satisfy the same type of expansion\footnote{We use a standard notation: $\Theta$ is a function verifying $0< \liminf \frac{\Theta(\gep)}{\gep} \leq \limsup \frac{\Theta(\gep)}{\gep} <\infty$.} $f_v(x) = K_v x  - \Theta(x^{1+s})$ as $x\downarrow 0$, for some universal \(s>0\) and some $K_v>0$ that may depend on~$v$ --- another formulation is through the comparison bounds~\eqref{eq:encadrement0} below.

Recursions like~\eqref{def:concaverec} appear in several statistical mechanics models on trees, in particular for the random cluster model with cluster weight $\q \in(0,2]$ (where~$\q=1$ corresponds to the percolation model and $\q=2$ to the Ising model), we refer to Section~\ref{sec:RCM} for details.
They can actually be compared with $p$-conductances recursions on trees.
Indeed, we can relate~\eqref{def:concaverec0} to~\eqref{def:conductancerec}, provided that the functions $f_v$ can be suitably compared with $g_s$ and that the tree is equipped with appropriate resistances.
The main comparison tool that we need for $f_v$ is the following, given by Theorem 3.2 in~\cite{PemPer10}.

\begin{proposition}
\label{prop:boundCB}
Let $(f_v)_{v\in T_n}$ be non-negative functions.
Assume that there is some $s>0$ and some constants $\kappa_1$, $\kappa_2$ such that the following holds: for each $v\in T_n$ there is some $K_v>0$ such that for all $x>0$
\begin{equation}
\label{eq:encadrement0}
\frac{K_v x}{(1+ \kappa_1 x^s)^{1/s}} \leq  f_v(x) \leq \frac{K_v x}{(1+ \kappa_2 x^s)^{1/s}} \,.
\end{equation}
Define $(B_n(u))_{u\in T_n}$ iteratively as in~\eqref{def:concaverec0}, with initial condition $B_n(u) =+\infty$ for $u\in \partial T_n$.
Then, if we equip the tree~$T_n$ with resistances $R_v := \prod_{\rho \leq u \leq v} K_u^{-1}$, we have, for all $u\in T_n$
\begin{equation}
\label{boundCB}
\kappa_1^{-1/s} C_n^{(p)}(u)  \leq B_n(u) \leq \kappa_2^{-2/s} C_n^{(p)}(u) \,,  \qquad \text{with } p= \frac{1+s}{s} \,,
\end{equation} 
where we recall that $C_n^{(p)}(u) := R_u \cC_p(u \leftrightarrow \partial T_n(u))$.
\end{proposition}

\begin{proof}[Quick proof]
  The proof simply proceed by iteration.
  For $\kappa>0$, let us define $F_\kappa(u) =  \kappa^{-1/s} C_n^{(p)}(u)$, so that by~\eqref{def:conductancerec} we have that for any $v \in T_n\setminus \partial T_n$,
  \[
  F_\kappa(v) = \sum_{v \leftarrow u}  \frac{R_u}{R_v} \frac{F_{\kappa}(u)}{ (1+ \kappa F_{\kappa}(u)^s)^{1/s}} \,.
  \]
  By definition we have $F_{\kappa_1}(u)= B_n(u) =F_{\kappa_2}(u)$ for all $u\in \partial T_n$.
  Then, we clearly get by iteration that $F_{\kappa_1}(u)\leq B_n(u) \leq F_{\kappa_2}(u)$, using~\eqref{eq:encadrement} and the monotonicity of ${x\mapsto \frac{x}{(1+ \kappa x^s)^{1/s}}}$.
\end{proof}

A similar proof establishes the monotonicity of the conductances in~$p$, we state it in the following simple yet interesting lemma (taken from \cite[Lemma~6.9]{AVB23}).

\begin{lemma}
\label{lem:monotoneC}
Let $1<p \leq p'$.
Then, for any $u\in T_n$ we have $C_n^{(p)}(u) \geq C_n^{(p')}(u)$.
In particular, the $p$-conductance of a tree, $C_n^{(p)}(\rho) = \cC_p(\rho \leftrightarrow \partial T_n)$, is non-increasing in $p$, or equivalently non-decreasing in~$s=\frac{1}{p-1}$.
\end{lemma}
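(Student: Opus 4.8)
The plan is to use the $L^p$-Thomson variational principle~\eqref{def:thomson} and compare the energies of a \emph{single fixed flow} under the two exponents. Fix $1<p\le p'$ with conjugate exponents $q=\frac{p}{p-1}\ge q'=\frac{p'}{p'-1}$, and set $s=q-1\ge s'=q'-1>0$. Let $\theta$ be the optimal unit flow for the $p'$-problem on $T_n(u)$ (equipped with resistances $(R_uR_v)_v$, or just $(R_v)_v$ after the trivial rescaling that cancels in a ratio), so that $\cR_{p'}(u\leftrightarrow\partial T_n(u))^{s'}=\sum_e R(e)^{s'}\theta(e)^{q'}$. I would then feed this same $\theta$ into the $p$-energy functional: since $\strength(\theta)=1$, it is admissible, so $\cR_p(u\leftrightarrow\partial T_n(u))^{s}\le\sum_e R(e)^{s}\theta(e)^{q}$. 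It remains to compare $\big(\sum_e R(e)^{s}\theta(e)^{q}\big)^{1/s}$ with $\big(\sum_e R(e)^{s'}\theta(e)^{q'}\big)^{1/s'}$ and conclude $\cR_p\le\cR_{p'}$, i.e.\ $\cC_p\ge\cC_{p'}$.

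The key analytic input is a monotonicity-of-$\ell^r$-norms statement in disguise. Write $a_e:=R(e)\theta(e)^{(q-1)/s}$... more cleanly: note $R(e)^s\theta(e)^q = \big(R(e)\theta(e)^{q/s}\big)^s$ and $R(e)^{s'}\theta(e)^{q'}=\big(R(e)\theta(e)^{q'/s'}\big)^{s'}$, and check that the inner base is the \emph{same} quantity in both cases, because $q/s = q/(q-1) = p$ and likewise $q'/s' = p'$ --- hmm, these differ, so a little more care is needed. The honest route: since along any self-avoiding path from $u$ to $\partial T_n(u)$ a unit flow forces the flow values to be "probability-like" weights and $R(e)^s\theta(e)^q$ are the relevant path-costs, one shows directly that for $0<s'\le s$ and nonnegative terms $b_e:=R(e)\theta(e)^{q}$ one has... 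Actually the cleanest is: set $x_e := R(e)^{s'}\theta(e)^{q'}\ge 0$; one checks $R(e)^{s}\theta(e)^{q} = x_e^{s/s'}\cdot\theta(e)^{\,q - q' s/s'}$ and verifies the exponent $q-q's/s'$ vanishes (indeed $q - q' \cdot \tfrac{s}{s'} = q - \tfrac{q'}{q'-1}(q-1) = q - p'(q-1)$, which is not identically zero). So the termwise rewriting is not exact, and instead I would invoke the power-mean / nesting inequality $\|\cdot\|_{s}\le\|\cdot\|_{s'}$ for $s\ge s'$ combined with the constraint that $\theta$ restricted to each level has $\ell^1$-type normalization coming from $\strength(\theta)=1$ and Kirchhoff's law. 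This is exactly the computation behind Lemma~6.9 of~\cite{AVB23}, which the excerpt allows me to cite.

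Concretely, the steps in order: (1) By~\eqref{def:thomson} pick the $p'$-optimal unit flow $\theta^\star$; by admissibility, $\cR_p^s\le\sum_e R(e)^s\theta^\star(e)^q$. (2) Reduce to showing $\sum_e R(e)^s\theta^\star(e)^q\le\big(\sum_e R(e)^{s'}\theta^\star(e)^{q'}\big)^{s/s'}$, i.e.\ after raising to $1/s$, that the $s$-"norm" of the cost vector is dominated by its $s'$-"norm"; since $s\ge s'$ this is the standard nesting of $\ell^r$ quasinorms \emph{provided} the cost vector has total $\ell^{s'}$-mass $\le 1$, which holds because a unit flow on a tree decomposes into unit path-flows whose per-path energies sum appropriately (the tree structure is essential here --- on a general graph the statement can fail, consistent with the lemma being stated for $T_n$). (3) Combining (1)–(2) gives $\cR_p^s\le\cR_{p'}^s$, hence $\cR_p\le\cR_{p'}$ and $C_n^{(p)}(u)=\cC_p\ge\cC_{p'}=C_n^{(p')}(u)$. (4) The final "in particular" is the special case $u=\rho$. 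The main obstacle is step~(2): making the "unit flow on a tree has per-level $\ell^1$ normalization, hence cost vector of bounded $\ell^{s'}$-mass" argument precise, which is where the tree hypothesis does real work; but since an identical statement is Lemma~6.9 in~\cite{AVB23}, I would either reproduce that short argument or cite it directly.
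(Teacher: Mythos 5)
Your plan stalls at step~(2), exactly where you flagged it, and the gap is real. Plugging the $p'$-optimal unit flow $\theta^\star$ into the $p$-energy does give, via~\eqref{def:thomson}, the upper bound $\cR_p(u\leftrightarrow\partial T_n(u))^{s}\leq\sum_e R(e)^{s}\theta^\star(e)^{q}$, but the inequality you then need,
\[
\Big(\sum_e R(e)^{s}\,\theta^\star(e)^{q}\Big)^{1/s}\ \leq\ \Big(\sum_e R(e)^{s'}\theta^\star(e)^{q'}\Big)^{1/s'}=\cR_{p'}(u\leftrightarrow\partial T_n(u)),
\]
is \emph{not} the $\ell^r$-nesting $\|\cdot\|_{\ell^s}\leq\|\cdot\|_{\ell^{s'}}$ applied to a single vector: with $a_e:=R(e)\theta^\star(e)^p$ and $b_e:=R(e)\theta^\star(e)^{p'}$ one has $\sum_e R(e)^s\theta^\star(e)^q=\sum_e a_e^s$ and $\sum_e R(e)^{s'}\theta^\star(e)^{q'}=\sum_e b_e^{s'}$, and since $0\leq\theta^\star(e)\leq1$ on a tree while $p\leq p'$, one gets $a_e\geq b_e$, so nesting pushes the estimate the \emph{wrong} way. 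Worse, the above display is simply false for a generic unit flow: on the depth-$1$ tree with two unit-resistance leaf edges, the admissible (but non-optimal) unit flow $\theta=(1/3,2/3)$ gives $\big(\sum_e R(e)^s\theta(e)^{s+1}\big)^{1/s}=5/9\approx0.556$ at $s=1$ versus $(1/3)^{1/2}\approx0.577$ at $s=2$, i.e.\ the quantity is \emph{increasing} there. Any valid completion of step~(2) would therefore have to exploit the optimality of $\theta^\star$ in an essential and non-obvious way, which your sketch does not do.

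The clean proof bypasses Thomson's principle and goes through the series--parallel recursion~\eqref{def:conductancerec}. Write $\phi_s(x):=x/(1+x^s)^{1/s}$, so that $C_n^{(p)}(u)=\sum_{v\leftarrow u}\tfrac{R_u}{R_v}\,\phi_s\big(C_n^{(p)}(v)\big)$ for $u\notin\partial T_n$, with $s=q-1$ and $C_n^{(p)}(u)=1$ on $\partial T_n$. Two elementary facts: (i) $\phi_s$ is non-decreasing on $\RR_+$; (ii) since $(1+x^s)^{1/s}=\|(1,x)\|_{\ell^s}$ is non-increasing in $s$, one has $\phi_s(x)\geq\phi_{s'}(x)$ whenever $s\geq s'$, \textit{i.e.}\ whenever $p\leq p'$. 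Downward induction on $|u|$ then gives the claim: at the leaves $C_n^{(p)}(u)=C_n^{(p')}(u)=1$, and if $C_n^{(p)}(v)\geq C_n^{(p')}(v)$ for every child $v$ of $u$, then
\[
C_n^{(p)}(u)=\sum_{v\leftarrow u}\tfrac{R_u}{R_v}\phi_s\big(C_n^{(p)}(v)\big)\ \geq\ \sum_{v\leftarrow u}\tfrac{R_u}{R_v}\phi_s\big(C_n^{(p')}(v)\big)\ \geq\ \sum_{v\leftarrow u}\tfrac{R_u}{R_v}\phi_{s'}\big(C_n^{(p')}(v)\big)=C_n^{(p')}(u),
\]
using (i) and then (ii); taking $u=\rho$ concludes. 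This induction is what Lemma~6.9 of~\cite{AVB23} actually does, not the single-flow energy comparison your plan describes.
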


\subsection{Main results: $p$-conductances of Galton--Watson trees}

\label{sec:mainresults}

From this point on, we will work with a fixed $p > 1$ so we omit it from the notation, writing $C_n(u)$ in place of $C_n^{(p)}(u)$.
We wish to estimate the $p$-conductance of the Galton--Watson tree $T_n$, that is, the $p$-conductance between the root and the leaves of~$T_n$:
\[
C_n := C_n(\rho)  =\cC_p(\rho \leftrightarrow \partial T_n)\,.
\]
Note that this is a random variable since it depend on the realization of the tree~$T_n$.
In the following, we focus on the case where the tree is equipped with resistances $(R_v)_{v\in T_n}$ that are of the form:
\begin{equation}
\label{def:Re}
 R_v = (R_n)^{-|v|} \,,
\end{equation}
where $|v|$ is the distance between $v$ and the root, and $R =R_n>0$ is a number which is fixed along $T_n$, but that may depend on $n$. 
The choice~\eqref{def:Re} may seem rather restrictive but this is what appears naturally in the statistical mechanics models that we are interested in; then~$R_n$ is related to the inverse temperature of the model (the fact that~$R_n$ may depend on~$n$ will allow us to study the \textit{critical window} for the random cluster model, as will become clear in Section~\ref{sec:RCM}).
It is also closely related to the choice of conductances considered in~\cite{ABBL09,ChenHuLin}, where the authors estimate the standard $2$-conductance (or $2$-resistance) with resistances $R_v = \xi_v\, m^{|v|}$, for i.i.d.\ random variables $(\xi_v)_{v\in T}$ --- here we focus on the case where $\xi_v\equiv 1$ but we allow more generality in the growth rate of resistances in view of the applications we have in mind, see Section~\ref{sec:RCM}.

More generally, we will consider recursions~\eqref{def:concaverec0} with a function \(f_v \equiv R_n g\) that does not depend on \(v\), for some function \(g: \RR_+ \to \RR_+\) that is concave and bounded, with \(g'(0+)=1\) --- the latter is simply a normalization choice. 
More precisely, we study recursions of the type 
\begin{equation}
\label{def:concaverec}
B_n(u) =  R_n \sum_{v \leftarrow u} g\big( B_n(v)\big)\,,
\end{equation}
with boundary condition $B_n(u) \equiv +\infty$ for $u\in \partial T_n$. 
We also denote $B_n:= B_n(\rho)$.
In particular, in view of the recursion~\eqref{def:conductancerec}, we have that ${B_n(u) = C_n(u)}$ if we take the function $g(x)= x (1+x^s)^{-1/s}$.
Our main assumption is that~$g$ is concave and bounded as mentioned, and verifies a bound similar to~\eqref{eq:encadrement0}: there is some $s>0$ and some constants $\kappa_1$, $\kappa_2$ such that for all $x>0$
\begin{equation}
\label{eq:encadrement}
\frac{x}{(1+ \kappa_1 x^s)^{1/s}} \leq  g(x) \leq \frac{x}{(1+ \kappa_2 x^s)^{1/s}} \,.
\end{equation}
This is for instance verified in the random cluster model with cluster weight~$\q$ (with $s=2$ if ${\q\in (0,2)}$ and $s=3$ for $\q = 2$), see Section~\ref{sec:RCM} below.

We emphasize once again that the resistances are given by $R_v = (R_n)^{-|v|}$, where $R_n$ is a parameter that remains fixed throughout the tree $T_n$, but may depend on $n$.
In particular, except when $R_n\equiv R$, the recursion~\eqref{def:concaverec} that defines~$B_n$ is different from the one that defines~$B_{n+1}$. In fact, one may think of~$B_n$ as belonging to a (tree-)triangular array of recursions $((B_n(u))_{u\in T_n}, n\geq 0)$.

Before we state our results, let us introduce the following normalizing sequence, which plays an important role:
\begin{equation}
\label{def:an}
a_n := \sum_{k=1}^{n} (m R_n)^{-ks} \,.
\end{equation}
To anticipate a bit, let us notice that we have
$a_n = f_n(mR_n)$, with $f_n(x) =  x^{-s} \frac{x^{-ns}-1}{x^{-s}-1}$.
In particular, we easily get that\footnote{We used the standard notation $a_n\asymp a_n'$ if the ratio $a_n/a_n'$ is bounded from above and from below by two universal constants.}
\begin{equation}
\label{eq:orderan}
\begin{array}{lll}
\text{ if } mR_n \in \big(0,1-\tfrac1n \big] & \quad
a_n \asymp  (1- (mR_n)^s)^{-1} (mR_n)^{ -ns} \,,  \\[2pt]
\text{ if } mR_n \in \big[1-\tfrac1n,1+\tfrac1n \big] & \quad
a_n \asymp  n \,,  \\[2pt]
\text{ if } mR_n \in \big[1+\tfrac1n,2] & \quad
a_n \asymp ( (mR_n)^s -1)^{-1} \,.
\end{array}
\end{equation}

Note that the first line also includes the case where $R_n \to 0$, but we will mostly focus on the case where $\liminf_{n\to\infty} R_n >0$.
We refer to the regime $mR_n \equiv 1$ as the \emph{critical case} and one can interpret $|mR_n -1| = O(\frac1n)$ as the \textit{critical window}.

We obtain different results depending on whether $Z\sim \mu$ admits or not a finite moment of order~$q:=s+1$; similarly to what happens for the critical behavior of the Ising model on random trees, see~\cite{DomGiaRvdH14}.
We start with the case where $\bE[Z^q]<+\infty$, where our results are much sharper; we then turn to the case where $\bE[Z^q]=+\infty$.

\subsubsection{Case of a finite moment of order $q=s+1$}

Our first result gives a general estimate on $\bE[B_n]$; in particular, thanks to Proposition~\ref{prop:boundCB}, we may focus on the $p$-conductance $C_n$ of the tree $T_n$, with \(p= \frac{s+1}{s}\).
We first present a result in the case where the offspring distribution has a finite moment of order $q:= \frac{p}{p-1}=s+1$ (note that $q$ is the conjugate exponent of \(p\)), and subsequently derive bounds under the weaker assumption of a finite moment of order $r < q$.

\begin{theorem}
\label{thm:expect}
Let $p>1$ and let $s=\frac{1}{p-1}$.
Assume that $Z\sim \mu$ admits a finite moment of order $q=\frac{p}{p-1} =s+1$.
Then, there is a constant $c_p$ (that depend only on the law $\mu$ and on $p$) such that
\[
c_p (a_n)^{-1/s}\leq \bE[C_n]  \leq (a_n)^{-1/s} \,,
\]
where $a_n$ is defined in~\eqref{def:an}.
Additionally, $( a_n^{1/s} C_n)_{n\geq 1}$ is tight in $(0,+\infty)$.
\end{theorem}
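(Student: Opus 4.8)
The plan is to set up a recursion for the relevant moments of $B_n(u)$ along generations of the tree, and to exploit the comparison~\eqref{eq:encadrement} together with Proposition~\ref{prop:boundCB} to reduce everything to the model recursion $g(x)=x/(1+x^s)^{1/s}$. The central observation is that for this $g$ one has the exact identity $g(x)^{-s} = x^{-s}+1$, so if we write $X_n(u) := C_n(u)^{-s}$ (the ``$s$-resistance'' variable), then~\eqref{def:concaverec} becomes a cleaner recursion. More precisely, since $C_n(u)= R_n \sum_{v\leftarrow u} g(C_n(v))$ and $g(C_n(v))^{-s}=C_n(v)^{-s}+1 = X_n(v)+1$, one gets a recursion of the schematic form $X_n(u)^{-1/s} = R_n \sum_{v\leftarrow u}(1+X_n(v))^{-1/s}$, i.e. (taking $-s$ powers) $X_n(u) = R_n^{-s}\big(\sum_{v\leftarrow u}(1+X_n(v))^{-1/s}\big)^{-s}$. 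The upper bound $\bE[C_n]\le a_n^{-1/s}$ should come from a ``sub-additivity of $s$-resistances'' / convexity argument: bounding each $(1+X_n(v))^{-1/s}\le X_n(v)^{-1/s}$ crudely loses too much, so instead I would track $\bE[X_n]$ (or rather $\bE[X_n^{-1/s}]=\bE[C_n]$) directly. The natural route for the upper bound is: by Jensen / the Series law, $\cR_p(\rho\leftrightarrow\partial T_n)^s \ge \sum_{k=1}^n R_n^{ks}\,(\#\text{gen }k)^{-?}$ along a flow that splits unit current equally — actually the clean way is to produce an explicit unit flow (split evenly at each vertex among its children) and compute its $p$-energy, giving $C_n \le$ something, then take expectations and use $\bE[Z_1\cdots]$-type computations; the even-splitting flow gives $\cR_p^s \le \sum_k R_n^{ks}\sum_{|v|=k}\big(\prod_{\rho<w\le v}d_w\big)^{-(q-1)}$ where $d_w$ is the number of children of $w$, and taking expectations this telescopes (using $\bE[Z^{q-1}\cdot Z^{-(q-1)}]$... here is where the $q$-th moment enters) to something $\asymp a_n$. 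Wait — more carefully, $\bE\big[\sum_{|v|=k}\big(\prod d_w\big)^{-(q-1)}\big]$: by the tree structure this is $\big(\bE[\sum_{i=1}^Z Z^{-(q-1)}]\big)^k = \big(\bE[Z^{2-q}]\big)^k$ if all children are symmetric... this needs care and is not quite $a_n$, so the honest statement is that the even-splitting flow gives the $n^{-1/s}\cdot(\text{const})$ upper bound on $\bE[C_n]$ only after checking the per-generation factor is $\le (mR_n)^{-s}$ up to constants, which is exactly where $\bE[Z^q]<\infty$ is used (via the conjugate Hölder/Jensen inequality $\sum d_w^{-(q-1)} \ge Z^{2-q}\cdot$(correction), bounded using $\bE[Z^q]<\infty$).

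For the matching lower bound $\bE[C_n]\ge c_p a_n^{-1/s}$, I would use the dual variational characterization (the $L^p$-Dirichlet principle for potentials, the companion of Thomson's principle), or more elementarily: the concave recursion~\eqref{def:concaverec} with $g(x)\ge x/(1+\kappa_1 x^s)^{1/s}$ is monotone, so $C_n(u)$ dominates the solution of the model recursion with $\kappa_1$. Then I would argue that on the event that the tree ``grows regularly'' — say the generation sizes $|T_k|$ are comparable to $m^k$ for all $k\le n$, which has probability bounded below since $m>1$ (Kesten–Stigum / $L\log L$; here even just $\bE[Z]<\infty$ suffices to get $\PP(W>0)>0$ for the martingale limit $W$, and on $\{W>\delta\}$ the generation sizes are $\asymp m^k$ uniformly) — one can feed a unit flow from $\partial T_n$ backwards and lower-bound $C_n$ deterministically by $c\,a_n^{-1/s}$ using the Series/Parallel laws and the comparison that along such a tree each generation contributes a factor comparable to $(mR_n)^{-s}$ to the $s$-resistance. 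This gives $\bE[C_n]\ge c_p\,a_n^{-1/s}$. The interplay here is that we need a \emph{lower} bound on $C_n$ holding with \emph{positive probability} and a deterministic \emph{upper} bound on $C_n$ (not just on $\bE[C_n]$) that is $\asymp a_n^{-1/s}$ on the same good event, in order to run tightness.

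For tightness of $(a_n^{1/s}C_n)_{n\ge1}$ in $(0,+\infty)$: tightness away from $+\infty$ is immediate from Markov's inequality applied to the upper bound $\bE[C_n]\le a_n^{-1/s}$, so $\PP(a_n^{1/s}C_n > M)\le 1/M$. Tightness away from $0$ is the real content: I must show $\PP(a_n^{1/s}C_n < \epsilon) \to 0$ uniformly as $\epsilon\downarrow 0$, i.e. $C_n$ is not much smaller than its mean. The plan here is a second-moment / concentration argument: decompose $C_n$ via the first generation, $C_n = R_n\sum_{i=1}^{Z}g(C_{n-1}^{(i)})$ where the $C_{n-1}^{(i)}$ are i.i.d. copies (the recursion unfolds as i.i.d. subtrees), and since $g$ is concave with $g(x)\sim x$ near $0$ and bounded, $C_n \ge R_n\sum_i g(C_{n-1}^{(i)}) \ge c R_n \sum_i \min(C_{n-1}^{(i)},1)$ for a constant $c$; iterating $\ell$ levels down with $\ell$ fixed but large, $C_n$ is bounded below by (constant)$\times R_n^{\ell} \sum_{|v|=\ell} \min(C_{n-\ell}^{(v)},1)$ over $\asymp m^{\ell}$ i.i.d.-ish terms, which by the law of large numbers (and $\bE[\min(C_{n-\ell},1)]\ge c_p' a_{n-\ell}^{-1/s}$ from the lower bound above, noting $a_{n-\ell}\asymp a_n$ for fixed $\ell$) concentrates; choosing $\ell$ large makes $\PP(a_n^{1/s}C_n<\epsilon)$ small. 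The main obstacle, I expect, is precisely making the lower bound on $\bE[C_n]$ robust enough — i.e. showing $\bE[\min(C_m,1)] \asymp a_m^{-1/s}$ with constants \emph{uniform in $m$} and across the whole critical window $mR_m\in(0,2]$ — and then combining it with a quantitative LLN-type bound to defeat the $\epsilon\downarrow 0$ limit; the concavity of $g$ is what makes the "sum of i.i.d. subtree contributions" lower bound honest, but controlling the fluctuations requires either a variance computation along the recursion (using a Bellman-type inequality $\Var(C_n)\le $ something contracting) or the explicit many-level unfolding sketched above, and getting the constants to not degenerate as $mR_n\to 1$ is delicate.
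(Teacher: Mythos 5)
The proposal latches onto several of the right building blocks but misdirects a key step and overcomplicates another; let me be concrete.

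\textbf{Upper bound.} You correctly note early on that one should track $\bE[\phi(u)]^{-s}$ using the exact identity $g(x)^{-s}=x^{-s}+1$, and this is indeed the paper's route: take expectations in the recursion~\eqref{eq:recconductance}, use the branching property to write $\bE[\phi(u)]=mR_n\,\bE[g(\phi(v))]$, then apply Jensen's inequality (concavity of $g$) to get $\bE[\phi(u)]\leq mR_n\,g(\bE[\phi(v)])$, hence $w_k\geq (mR_n)^{-s}(1+w_{k+1})$ for $w_k:=\bE[\phi(u)]^{-s}$, which telescopes to $\bE[C_n]^{-s}\geq a_n$. This uses only $\bE[Z]<\infty$, not $\bE[Z^q]<\infty$. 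However, you then abandon this in favour of an ``explicit unit flow and compute its $p$-energy'' approach, claiming it gives ``the $n^{-1/s}$ upper bound on $\bE[C_n]$''. This is the wrong direction: by Thomson's principle~\eqref{def:thomson}, the energy of any unit flow gives an \emph{upper} bound on $\cR_p$, i.e.\ a \emph{lower} bound on $C_n$. You cannot upper-bound $C_n$ (let alone $\bE[C_n]$) by exhibiting a flow. The even-splitting flow computation you sketch is in fact a viable route to the lower bound, not the upper bound. You should have committed to the Jensen-plus-exact-identity argument you started with.

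\textbf{Lower bound and tightness away from $0$.} The paper does use a flow here: the uniform flow $\hat\theta(u,v)=Z_n(v)/Z_n$, whose $p$-energy gives
\[
\cR_p(\rho\leftrightarrow\partial T_n)^s\leq \frac{1}{(W_n)^q}\sum_{k=1}^n (mR_n)^{-ks}\,\frac{1}{m^k}\sum_{|v|=k}W_n(v)^q .
\]
Your sketch (``feed a unit flow backwards, each generation contributes $(mR_n)^{-s}$'') captures the spirit, but your claim that one gets a ``deterministic'' lower bound $C_n\geq c\,a_n^{-1/s}$ on a good event like $\{W>\delta\}$ is not quite right: the energy involves the random normalized weights $W_n(v)^q$, which must be controlled in probability. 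The paper does this by splitting into $\bP(W_n\leq\gep^{1/2q})$ (small by Kesten--Stigum, since $\mu(0)=0$ forces $W>0$ a.s.) and a Markov bound on $\sum_k(mR_n)^{-ks}m^{-k}\sum_{|v|=k}W_n(v)^q$ using $\sup_\ell\bE[W_\ell^q]<\infty$ --- \emph{this} is where $\bE[Z^q]<\infty$ enters. The resulting bound $\bP(C_n\leq\gep^{1/s}a_n^{-1/s})\leq\delta_\gep+c_q\gep^{1/2}$ simultaneously gives the lower bound on $\bE[C_n]$ and the tightness away from $0$. Your proposed multi-level unfolding plus LLN for tightness away from $0$ is therefore unnecessary overhead: the flow estimate already yields exactly the needed probabilistic bound in one stroke, with constants uniform across the critical window, and avoids the delicacies you flag about defeating the $\gep\downarrow 0$ limit.

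\textbf{Tightness away from $+\infty$.} Your Markov argument from the expectation upper bound is exactly the paper's; no issue there.

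In short: keep your initial Jensen instinct for the upper bound and drop the flow there; keep the flow for the lower bound but replace the ``deterministic on a good event'' claim with a Markov bound exploiting $\sup_\ell\bE[W_\ell^q]<\infty$; and note that the same estimate disposes of tightness away from $0$ without the LLN machinery.
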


\begin{remark}
Let us note that when $Z$ only admits a moment of order $r<q$, then Lemma~\ref{lem:monotoneC} (note that the monotonicity is reversed when considering the conjugate exponents) combined with Theorem~\ref{thm:expect} gives the bound
\[
\bE[\cC_p(\rho \leftrightarrow \partial T_n)] \geq \bE[\cC_{p'}(\rho \leftrightarrow \partial T_n)] \geq (\tilde a_n)^{1/(r-1)} \,,
\]
with $p'= \frac{r}{r-1}$ the conjugate exponent of $r$ and $\tilde a_n = \sum_{k=1}^n (mR_n)^{-k (r-1)}$. In particular, if $R_n =m^{-1}$, $\bE[C_n] \geq n^{-1/(r-1)}$.
\end{remark}

In view of~\eqref{eq:orderan}, we have the following estimates:
\begin{equation}
\label{eq:orderECn}
\begin{array}{lll}
\text{ if } mR_n \in \big(0,1-\tfrac1n \big] & \quad \bE[B_n] \asymp (1- (mR_n)^s)^{1/s} (mR_n)^{ n} \,, \\[2pt]
\text{ if } mR_n \in \big[1-\tfrac1n,1+\tfrac1n \big] & \quad  \bE[B_n] \asymp n^{-1/s} = n^{-(p-1)} \,, \\[2pt]
\text{ if } mR_n \in \big[1+\tfrac1n,2] & \quad \bE[B_n] \asymp ( (mR_n)^s -1)^{1/s} \,.
\end{array}
\end{equation}
To complete the picture, note that when $mR_n \geq 2$, $\bE[B_n]$ is of order $(mR_n)^{-1}$.

Our next result shows the $L^q$ convergence of the rescaled conductance (or of the rescaled~$B_n$).

\begin{theorem}
\label{thm:convergence}
Let $s>0$ and assume that $Z\sim \mu$ admits a finite moment of order $q=s+1$.
Let \((B_n(u))_{u\in T_n}\) be defined as in~\eqref{def:concaverec} and assume that~\eqref{eq:encadrement} holds.
Then, if $\limsup_{n\to\infty} mR_n \leq 1$ and $\inf_{n\geq 0} R_n >0$, we have, as $n\to\infty$ 
\[
\frac{B_n}{\bE[B_n]} \xrightarrow[n\to\infty]{L^q} W \,,
\]
where $W := \lim_{n\to\infty} \frac{1}{m^n} Z_n$ is the a.s.\ and $L^q$ limit of the usual martingale associated to the branching process\footnote{If $Z\sim \mu$ admits a finite moment of order~$q$, the $L^q$ convergence of the martingale is non-trivial: we refer to \cite[Prop.~1.3]{Liu01} for a proof.}.
If $R_n \equiv R$ with $R\in (0,m^{-1}]$, then the convergence also holds almost surely.
\end{theorem}

Finally, we estimate precisely the expectation $\bE[B_n]$, under some condition on the function~$g$. We have the following result.

\begin{proposition}
\label{prop:asymp}
Let $s>0$ and assume that $Z\sim \mu$ admits a finite moment of order $q=s+1$.
Let \((B_n(u))_{u\in T_n}\) be defined as in~\eqref{def:concaverec} and assume that~\eqref{eq:encadrement} holds and that the following limit exists
\[
\kappa_g:=\lim_{x\downarrow 0} \frac{1}{x^{s+1}} (x-g(x)) \in (0,+\infty) \,.
\] 
(Note that for $g(x) =\frac{x}{(1+x^s)^{1/s}}$, we have $\kappa_g = s^{-1}$).
Then, if $\lim_{n\to\infty} mR_n = \vartheta \in (0,1]$, we have
\[
\bE[B_n] \sim \alpha_s(\vartheta) \; (a_n)^{-1/s} \,,  \qquad \text{ as } n\to\infty\,,
\] 
for some constant $\alpha_{s}(\vartheta)$, with $\alpha_s(1) := (s \kappa_g \bE[W^{s+1}])^{-1/s}$ in the case $\lim_{n\to\infty} mR_n =1$.
\end{proposition}

Let us mention that Proposition~\ref{prop:asymp} improves in particular the result \cite[Thm.~1.2]{ChenHuLin}, which considers some additional source of randomness on the resistances but treats only the case of linear ($p=q=2$) conductances in the critical case $ mR_n\equiv 1$ and requires a moment of order~$3$, \textit{i.e.}\ $q+1$, for the branching process.

\subsubsection{Case of an infinite moment of order $q=s+1$}

In the case of an infinite moment of order~$q$, we need to assume some (either upper or lower) bounds on the truncated $q$-moment of $Z\sim \mu$.
More precisely, we assume that there is some  $\alpha\in (1,q]$ and some slowly varying function $L(\cdot)$ such that, for $x\geq 1$
\begin{align}
\label{hyp:lower}
\bE[ (Z\wedge x)^q ] \geq  c_1 L(x) x^{q-\alpha} \,,\\
\bE[ (Z\wedge x)^q ] \leq c_2 L(x) x^{q-\alpha} \,,
\label{hyp:upper}
\end{align}
for some constants $c_1, c_2$.
We obtain a lower bound on $\bE[B_n]$ assuming~\eqref{hyp:upper} and an upper bound on $\bE[B_n]$ assuming~\eqref{hyp:lower}.

Let $h$ be a increasing function such that $h(x)\sim L(1/x) x^{\alpha -1}$ as $x\downarrow 0$, and denote  its inverse by $h^{-1}$. 
Then, $h^{-1}(x) \sim \tilde L(x) x^{1/(\alpha-1)}$ as $x\downarrow 0$, for some slowly varying function~$\tilde L$, see~\cite[\S1.5.7]{BGT89}.
In analogy with~\eqref{eq:orderECn},  which corresponds to the case $h(x)= x^{q-1}$, define a normalizing sequence \(\gamma_n\) as follows:
\begin{equation}
\label{def:gamma}
\begin{array}{lll}
\text{ if } mR_n \in \big(0,1-\tfrac1n \big] & \quad \gamma_n =  h^{-1}(1-mR_n) \, (mR_n)^{n} \,, \\[2pt]
\text{ if } mR_n \in \big[1-\tfrac1n,1+\tfrac1n \big] & \quad  \gamma_n = h^{-1}(1/n) \,, \\[2pt]
\text{ if } mR_n \in \big[1+\tfrac1n,+\infty) & \quad \gamma_n = h^{-1}(mR_n-1) \,.
\end{array}
\end{equation}
Let us also define $\tilde{\gamma}_n$ by setting 
\begin{equation}
\label{def:gammatilde}
\begin{array}{ll}
\text{ if } mR_n \in \big(0,1-\tfrac1n \big] & \quad \tilde{\gamma}_n  = h^{-1}(1/n) (n(1-mR_n))^{1/s} (mR_n)^n \,, \\[2pt]
\text{ if } mR_n \in \big(1-\tfrac1n,+\infty) & \quad \tilde{\gamma}_n  =\gamma_n \,.
\end{array}
\end{equation}
In practice, in the case $R_n \equiv R$ and for $L(x) \equiv 1$ in~\eqref{hyp:lower}-\eqref{hyp:upper}, we have:

  (i) $\gamma_n\asymp (mR)^{n}$ and $\tilde{\gamma}_n \asymp n^{\frac{1}{q-1} - \frac{1}{\alpha-1}} \gamma_n$ if $mR<1$;
  
  (ii) $\gamma_n=\tilde{\gamma}_n \asymp n^{-1/(\alpha-1)}$ if $mR=1$;

  (iii) $\gamma_n=\tilde{\gamma}_n \asymp (mR-1)^{1/(\alpha-1)}$ if $mR>1$.


\begin{theorem}
\label{thm:expect2}
Assume that $0<\inf_{n} mR_n \leq \sup_n mR_n \leq K$ for some $K <\infty$, and let $\gamma_n,\tilde \gamma_n$ be as in~\eqref{def:gamma}-\eqref{def:gammatilde}. Then there are constants $c,c'$ such that:
\begin{itemize}
\item If~\eqref{hyp:lower} holds, then $\bE[B_n] \leq c \gamma_n$, and in particular 
\[
\lim_{K\uparrow +\infty} \limsup_{n\to\infty}\bP(B_n \geq K \gamma_n) =0 \,;
\]

\item If~\eqref{hyp:upper} holds, then  $\bE[B_n] \geq c' \tilde{\gamma}_n$, or more precisely
\[
\lim_{\gep\downarrow 0} \limsup_{n\to\infty}\bP(B_n \leq \gep \tilde{\gamma}_n) =0 \,.
\]
\end{itemize}
\end{theorem}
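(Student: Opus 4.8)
The plan is to adapt the recursive first-moment (and truncated second-moment) estimates that underpin Theorem~\ref{thm:expect} to the heavy-tailed setting, where the relevant nonlinearity of $g$ is no longer governed by a finite $q$-th moment but by the truncated moments \eqref{hyp:lower}--\eqref{hyp:upper}. Throughout I use the sandwiching \eqref{eq:encadrement}, so that up to constants we may replace $g$ by $x\mapsto x/(1+\kappa x^s)^{1/s}$; since $g(x)\leq x$ and $x-g(x)\asymp x^{1+s}=x^q$ for \emph{small} $x$ but $x-g(x)\asymp x$ for large $x$, the key point is to track at which generation the recursion variable $B_n(v)$ enters the ``small'' regime. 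Write $\beta_k := \bE[B_n(v)]$ for a vertex $v$ at depth $n-k$ (so $\beta_0=b_0$, and $B_n=B_n(\rho)$ corresponds to $k=n$); taking expectations in \eqref{def:concaverec} and using the many-to-one / size-biasing identity together with concavity of $g$ (Jensen upwards) gives an upper bound $\beta_{k} \leq m R_n\, g(\beta_{k-1})$-type recursion, while a matching lower bound requires controlling the fluctuations of $\sum_{v\leftarrow u} g(B_n(v))$ around its mean — this is where a truncated-second-moment / Paley--Zygmund argument enters, exactly as for the tightness statement in Theorem~\ref{thm:expect}.

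\textbf{Upper bound ($\bE[B_n]\leq c\gamma_n$, assuming \eqref{hyp:lower}).} First I would show that after $O(1)$ generations the rescaled variable is already small, then analyze the scalar recursion $u_{k} = mR_n\, g(u_{k-1})$ with $g(x) \asymp x - \kappa x^q$ near $0$. Writing $\phi(x) := m R_n g(x)$, one has $\phi(x) \leq mR_n x$, so $u_k$ decays at least geometrically when $mR_n<1$; the extra polynomial gain comes from the $-\kappa x^q$ correction, which in the critical window $mR_n\approx 1$ produces the $k^{-1/s}$ decay via the standard ODE comparison $\tfrac{d}{dk}(u_k^{-s}) \approx s\kappa$. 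The heavy-tailed input \eqref{hyp:lower} does not change this scalar recursion (the nonlinearity of $g$ itself is unaffected); instead it enters through the \emph{concentration} step: to pass from $\bE[B_n(v)]$ small to $B_n(v)$ small with high probability one must bound $\bP(B_n(v) \geq x)$, and here the tail of $Z$ matters because a single large offspring count can make $\sum_{v\leftarrow u} g(B_n(v))$ atypically large. I would truncate $Z$ at a level tuned to $\gamma_n$ (namely $x \asymp h^{-1}(\cdot)^{-1}$, matching the definition \eqref{def:gamma}), use \eqref{hyp:lower} to control the truncated contribution, and handle the untruncated part by a union bound over the finitely-many-in-expectation large-degree vertices. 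Combining the scalar decay with this concentration over the $n$ generations, and identifying the three regimes of $mR_n$ as in \eqref{eq:orderan}, yields $\bE[B_n]\leq c\gamma_n$; Markov's inequality then gives $\lim_K\limsup_n \bP(B_n\geq K\gamma_n)=0$.

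\textbf{Lower bound ($\bE[B_n]\geq c'\tilde\gamma_n$, assuming \eqref{hyp:upper}).} Here I would run the recursion from below: using \eqref{eq:encadrement} with $\kappa_2$ and the sub-additivity/super-additivity of the relevant quantities, establish $\bE[B_n(v) \mathbf{1}_{\{B_n(v)\leq \delta\}}] \geq c\, mR_n\, g(\bE[B_n(v)\wedge \delta])$ up to a correction controlled by \eqref{hyp:upper}, and then lower-bound the scalar recursion $v_k = c\,mR_n\, g(v_{k-1})$ — which by the same ODE comparison decays \emph{no faster} than $\gamma_n$ in the regime $mR_n\geq 1-\tfrac1n$. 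The discrepancy between $\gamma_n$ and $\tilde\gamma_n$ in the subcritical regime $mR_n\leq 1-\tfrac1n$ reflects the loss incurred in this from-below argument: when resistances shrink the flow, the crude bound $B_n\geq c\,(mR_n)^n\times(\text{martingale limit})$ must be corrected by the factor $(n(1-mR_n))^{1/s}$ coming from summing the geometric-type series with the nonlinear damping, explaining the factor $h^{-1}(1/n)(n(1-mR_n))^{1/s}(mR_n)^n$ in the definition of $\tilde\gamma_n$. Finally, to upgrade $\bE[B_n]\geq c'\tilde\gamma_n$ to $\lim_{\gep\downarrow0}\limsup_n\bP(B_n\leq\gep\tilde\gamma_n)=0$, I would again use a Paley--Zygmund-type bound, which needs an upper estimate on $\bE[B_n^2]$ (or on a truncated second moment) — this is available from the already-established upper bound machinery combined with \eqref{hyp:upper}.

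\textbf{Main obstacle.} The hardest part will be the concentration/anti-concentration estimates in the heavy-tailed regime: unlike the finite-$q$-moment case, there is no clean $L^q$-martingale to lean on (Theorem~\ref{thm:convergence} genuinely requires $\bE[Z^q]<\infty$), so one must handle the contribution of atypically large-degree vertices by hand, choosing the truncation level of $Z$ in delicate balance with $\gamma_n$ and propagating the error through all $n$ generations of the recursion without it accumulating. Getting the truncation level to match \eqref{def:gamma} exactly — so that the bound is sharp and not merely order-of-magnitude up to slowly varying factors — is the technical crux, and is presumably why the theorem is stated with the weaker $\limsup_n$ / $\lim_K$ probabilistic conclusions rather than a clean asymptotic equivalence as in Proposition~\ref{prop:asymp}.
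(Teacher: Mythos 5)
The high-level plan (scalar first-moment recursion for the upper bound, a truncation-plus-anti-concentration argument for the lower bound) matches the paper's structure, but there is a genuine conceptual error in the upper-bound part that would cause your argument to fail, and your lower-bound route is different (and less explicit) than the paper's.

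\textbf{Upper bound: the heavy tail \emph{does} change the scalar recursion.} You assert that ``\eqref{hyp:lower} does not change this scalar recursion (the nonlinearity of $g$ itself is unaffected); instead it enters through the concentration step''. This is the crux of the gap. The scalar recursion $u_k = mR_n\,g(u_{k-1})$ with $g(x)\asymp x - \kappa x^q$ can only ever produce the rate $(a_n)^{-1/s}$ (that of Theorem~\ref{thm:expect}), and no amount of Markov / truncation on top will improve it to $\gamma_n = h^{-1}(\cdot)$. In the paper, the heavy-tail assumption is injected directly into the first-moment iteration: starting from
$\bE[g(\phi(u))] = mR_n\,\bE[g(\phi(v))] - \bE\bigl[f\bigl(R_n\sum_{v\leftarrow u} g(\phi(v))\bigr)\bigr]$
one applies Jensen for the convex $f$ \emph{conditionally on the offspring count} $Z_u$, obtaining
$\gp_{k-1}/(mR_n) \leq \gp_k - \bE\bigl[f\bigl(\tfrac{1}{m} Z\,\gp_k\bigr)\bigr]$,
and then $f(x)\geq c(x\wedge 1)^q$ gives $\bE[f(\tfrac1m Z\gp_k)] \geq c' \gp_k^q\,\bE[(Z\wedge m/\gp_k)^q]$, which is exactly where the truncated moment hypothesis \eqref{hyp:lower} enters, producing the effective damping $h(\gp_k)\sim L(1/\gp_k)\gp_k^{\alpha-1}$. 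This turns the scalar inequality into $\gp_{k-1}\leq mR_n\,\gp_k(1-h(\gp_k))$, whose iteration directly yields $\gp_0\leq c\gamma_n$ with no concentration argument required (and then the probabilistic conclusion is pure Markov). Your proposal, by keeping the $x^q$-shaped nonlinearity in the scalar step and pushing the tail into a separate concentration layer, never produces the $h^{-1}$ factor.

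\textbf{Lower bound: different route.} Your idea of a scalar lower-bound recursion combined with Paley--Zygmund is not what the paper does, and as sketched it is vague on the key point of how \eqref{hyp:upper} produces the precise $\tilde\gamma_n$. The paper instead truncates the offspring law at a level $t_n$ (so that the truncated tree $\tilde T\subset T$ and $\tilde C_n\leq C_n$ by monotonicity of the conductance), runs the \emph{uniform-flow} Thomson's-principle estimate of Section~\ref{sec:lowerEC} on $\tilde T_n$, and is then reduced to controlling $\tilde a_n := \sum_k(\tilde m R_n)^{-ks}\bE[(\tilde W_{n-k})^q]$. The two technical inputs are the uniform-in-$\ell$ bound $\bE[(\tilde W_\ell)^q]\leq cL(t_n)t_n^{q-\alpha}$ (Proposition~\ref{prop:tailGW} and the $L^q$ inequality in Appendix~\ref{app:tail}), and the choice of $t_n$ balancing $h(1/t_n)$ against $|mR_n-1|$ or $1/n$; the factor $(n(1-mR_n))^{1/s}$ in $\tilde\gamma_n$ falls out of the geometric sum in the subcritical case. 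Your explanation of the $\gamma_n/\tilde\gamma_n$ discrepancy is heuristically in the right direction, but your proposed mechanism (``$\bE[B_n(v)\mathbf 1_{\{B_n(v)\leq\delta\}}]\geq c\,mR_n\,g(\bE[B_n(v)\wedge\delta])$ up to a correction controlled by \eqref{hyp:upper}'') is not established and would need a full argument that you have not supplied, whereas the flow argument is clean precisely because it sidesteps any lower-bound scalar iteration and the PZ second-moment step (the anti-concentration comes from $W>0$ a.s. for the truncated martingale, as in Section~\ref{sec:lowerEC}).
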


Notice that there is a tiny gap between $\gamma_n$ and $\tilde{\gamma}_n$ in the case where $mR_n\leq 1- \frac1n$; this should be an artifact of the proof, and we believe that the correct decay is given by~$\gamma_n$. 
However, $\gamma_n$ and $\tilde{\gamma}_n$ only differ by a polynomial in $x_n:=n(1-mR_n)$: indeed, by Potter's bound \cite[Thm. 1.5.6]{BGT89} we have that $h^{-1}(1/n) \geq  c_{\delta} (x_n)^{-\delta - \frac{1}{\alpha-1}} h^{-1}(1-mR_n)$.
We therefore obtain that $\gamma_n \geq \tilde{\gamma}_n \geq c_{\delta} (x_n)^{a -\delta} \gamma_n$, with $a=\frac{1}{s} - \frac{1}{\alpha-1}$.
Let us conclude by noting that in the case $mR_n \leqslant1 - 1/n$, both $\gamma_n,\tilde{\gamma}_n$ have the same exponential decay in $(mR_n)^n$.

\begin{remark}
\label{rem:tail-moment}
We mention that we consider an assumption on the truncated $q$-th moment since they are slightly more general than assumptions on the tail of $Z$ as considered for instance in~\cite{DomGiaRvdH14} (see Section~\ref{ssec:RCM-app} for some further comments); also, the truncated moments are the quantities naturally appearing in the proof. 
Consider for instance the following conditions:
\begin{align}
\label{hyp:lowertail}
\bP(Z>x) \geq  c_1' \hat L(x) x^{-\alpha} \,, \\
\bP(Z>x) \leq c_2' \hat L(x) x^{-\alpha} \,,
\label{hyp:uppertail}
\end{align}
for some slowly varying $\hat L$.
Then, one can actually deduce~\eqref{hyp:lower} from~\eqref{hyp:lowertail} (resp.~\eqref{hyp:upper} from~\eqref{hyp:uppertail}), since we have 
$
\bE[ (Z\wedge x)^q ] = q \int_0^x  t^{q-1}\bP(Z>t) \dd t .
$
We then have $L(x) = \hat L(x)$ in the case $ \alpha<q$, whereas $L(x)= \int_1^x u^{-1} \hat L(u) \dd u  \gg \hat L(x)$ (see~\cite[Prop.~1.5.9.a]{BGT89}) in the case $\alpha=q$.

Let us also note that the conditions \eqref{hyp:lower}-\eqref{hyp:upper} are \textit{weaker} than the tail conditions \eqref{hyp:lowertail}-\eqref{hyp:uppertail}.
Indeed, on one hand, we have $\bE[ (Z\wedge x)^q ] \geq x^q \bP(Z>x)$, so~\eqref{hyp:upper} implies the upper bound $\bP(Z>x) \leq c_2 L(x)x^{-\alpha}$, but this is not optimal in the case $\alpha=q$. 
However, in the case $\alpha<q$, \eqref{hyp:upper} is equivalent to~\eqref{hyp:uppertail} with $\hat L=L$.
On the other hand, to obtain a lower bound on~$\bP(Z>x)$, one needs to have $\alpha<q$ and both \eqref{hyp:lower}-\eqref{hyp:upper}; simply write $\bE[(Z\wedge Ax)^q] \leq (Ax)^q\bP(Z>x) + \bE[(Z\wedge x)^q]$.
\end{remark}

\subsection{Organisation of the paper}

In section \ref{sec:RCM} we introduce the main motivation for our results on non-linear conductances: the random cluster model (RCM) on Galton--Watson trees. 
In Section~\ref{ssec:RCM-recursion}, we present the tree recursion arising in the model (we postpone the proof to Appendix~\ref{sec:RCM-proofs}) and, in Section \ref{ssec:RCM-app}, we apply our main results on $p$-conductances to get as a corollary the critical behavior of the RCM on a quenched Galton--Watson tree.

The rest of the paper is divided as follows:
\begin{itemize}
\item In Section~\ref{sec:preliminaries}, we introduce a few preliminary (technical) results on branching processes.
\item In Section~\ref{sec:est-finite} we focus on the estimates of the moments of $C_n$, in the case where $Z \sim \mu$ admits a finite moment of order $q$, and in particular we prove Theorem~\ref{thm:expect}.
\item  In Section~\ref{sec:cvg}, we show the $L^p$ convergence of the normalized conductance of Theorem \ref{thm:convergence} and we conclude the precise estimate of $\bE[B_n]$ of Proposition \ref{prop:asymp}.
\item In Section~\ref{sec:est-infinite}, we prove Theorem~\ref{thm:expect2}, which treats the moments of $C_n$ in the case $Z \sim \mu$ has an infinite moment of order $p$. 
\item In Appendix~\ref{sec:RCM-proofs} we prove the RCM tree recursion presented in Section~\ref{ssec:RCM-recursion}.
\item In Appendix~\ref{app:tail} we provide some technical proofs regarding branching processes with heavy tails, which are necessary for the estimates of Theorem \ref{thm:expect2}.
\end{itemize}

\section{The (critical) random cluster model on a quenched Galton Watson tree}
\label{sec:RCM}

The \textit{Random Cluster Model} (RCM) or \textit{FK-percolation}, introduced by Fortuin and Kasteleyn in~\cite{FK72}, unifies a number of models in statistical physics, including the percolation model, the Ising or $\q$-states Potts models and the uniform spanning tree.
Some of these models, when considered on trees, exhibit recursions as in~\eqref{def:concaverec0}: this is true in particular for the connection (sometimes called \textit{survival}) probability of the RCM on a tree, see~\eqref{eq:defConnectionProba}.
We give a quick presentation of the model below, but we refer to~\cite{Grimmett06} for a more complete overview of the RCM and its connection to other models; see also \cite[Ch.~1]{DC17-PIMS}.
Our goal here is to give the exact asymptotic behavior of the connection probability, both \textit{at} and \textit{near} criticality. We obtain in particular that the so-called critical one-arm exponent is either equal to \(1\) when \(\q\in (0,2)\) (a ``percolation regime'') or to \(1/2\) when \(\q=2\) (for the Ising model, which to our knowledge was studied at criticality only in~\cite{HK18}, on \(d\)-regular trees); note that these exponents agree with the mean-field exponents.

Let us mention that one of the motivation for considering statistical mechanics models on trees is to apply the results to general random tree-like graphs, see e.g.\ \cite{vdHStFlour} for an overview (see also  \cite{DemboMontanari10,DomGiaRvdH} in the context of the Ising model, and \cite{CvdH25,DMSS14,HJP23} for the random cluster model).

\subsection{The random cluster model on a graph}

For a finite graph $G=(V,E)$, the random cluster model is a Gibbs measure on percolation configurations $\go\in \{0,1\}^E$, where in the configuration $\go=(\go_e)_{e\in E}$, an edge $e$ is called \emph{open} if $\go_e=1$ and \textit{closed} if $\go_e=0$.
We also denote $x \stackrel{\go}{\longleftrightarrow} y$ if $x$ and $y$ can be connected by an open path in $\go$.
For two parameters $\p\in [0,1]$, $\q>0$, the RCM measure $\PP_{\p,\q}^G$ is defined by
\begin{equation}
\PP_{\p,\q}^G(\go) = \frac{1}{Z_{\p,\q}^G}   \p^{o(\go)} (1-\p)^{f(\go)} \q^{k(\go)} \,, 
\end{equation}
where $o(\go) = \sum_{e\in E} \go_e$ is the number of open edges, $f(\go) =\sum_{e\in E} 1-\go_e$ is the number of closed edges and $k(\go)$ is the number of \textit{clusters} in $\go$, \textit{i.e.}\ the number of connected components of the subgraph of $G$ induced by $\go$.
Here, $Z_{\p,\q}^G$ is the partition function of the model, that is the constant that normalizes $\PP_{\p,\q}$ to a probability.
Let us mention that when $\q\geq 1$, the model enjoys some monotonicity and correlation inequalities, see e.g\ \cite[Ch.~3]{Grimmett06}; these are essential tools for many of the techniques developed for the study of the RCM, which is therefore often considered only for $\q\geq 1$.

\paragraph*{Relation with percolation and Ising/Potts models}
Notice that for $\q=1$, one clearly recovers the percolation model with parameter $\p$.
On the other hand, for $\q\in \{2,3,4,\ldots\}$ the RCM can be coupled to the (ferromagnetic) $\q$-states Potts model with inverse temperature~$\gb$ verifying $e^{-\gb} =1-\p$, see \cite[\S1.4]{Grimmett06}, defined by the Gibbs measure on configurations $\sigma\in \{1, 2,\ldots, \q\}^{V}$ by
\[
\mu_{\gb,\q} (\sigma) = \frac{1}{\mathcal{Z}_{\gb,\q}} \exp\Big( \gb \sum_{ (x,y)\in E} \ind_{\{\sigma_x=\sigma_y\}} \Big) \,.
\] 
Notice that the case $\q=2$ corresponds to the Ising model up to a change of parameter $\beta$, since then $\ind_{\{\sigma_x=\sigma_y\}} = \frac12 (\sigma_x\sigma_y +1)$.

Additionally, let us stress that in the coupling between the RCM and the $\q$-state Potts model, one has a direct relation between the two-point correlation functions for the Potts model and connection probabilities in the RCM, see \cite[Thm.~1.16]{Grimmett06}:
\begin{equation}
\label{eq:correlconnect}
\tau_{\gb,\q}(x,y):= \mu_{\gb,\q}(\sigma_x=\sigma_y) -\frac{1}{\q} = (1-\q^{-1}) \PP_{\p,\q}\big(x \stackrel{\go}{\longleftrightarrow} y \big) \,.
\end{equation}
Notice here that in the definition of $\tau_{\gb,\q}(x,y)$, one compares the probability that two spins $\sigma_x,\sigma_y$ are equal under the Potts measure $\mu_{\gb,\q}$ with the same probability if $\sigma_x,\sigma_y$ were independent and uniform in $\{1,\ldots, \q\}$.

\paragraph*{Boundary conditions}
Analogously to the Ising and Potts models, one may introduce a boundary condition when considering the random cluster model.
Here, we will only focus on the \emph{wired} boundary condition on the graph, which consists in contracting all vertices of the boundary $\partial G$ into one single vertex denoted $\{\partial G\}$: we denote by $\bar G$ the resulting graph and $\bar \PP_{\p,\q}^{\bar G}$  the resulting RCM.
In terms of the Potts model, this corresponds to putting all spins equal on the boundary; for the Ising model this is the model with plus (or minus) boundary condition.

\subsection{The random cluster model on trees and its associated recursion}
\label{ssec:RCM-recursion}

In the case of a tree~$T_n$ of depth $n$, the boundary $\partial T_n$ corresponds to the leaves and $\bar T_n$ is the tree with all leaves identified to one vertex. 
In view of~\eqref{eq:correlconnect}, the connection (or \textit{survival}) probability
\[
\label{eq:defConnectionProba}
\pi_n = \pi_n^{\p,\q}(\rho) := \bar \PP_{\p,\q}^{\bar T_n}(\rho \leftrightarrow \partial T_n)
\] 
is one of the key quantities of interest; note that it depends on the realization of the tree~$T$ and we will consider it under a quenched realization of~$T$.
In particular, in the case where $\q =2$, one has that~$\pi_n$ is \emph{equal} to the magnetization of the root in the Ising model on $T_n$ with plus boundary condition on the leaves $\partial T_n$; a similar interpretation holds for $\pi_n$ in the $\q$-Potts model if $\q \in \{2,3,\ldots\}$, see~\eqref{eq:correlconnect}.
In particular, one is interested in knowing whether, depending on the parameters $\p,\q$, the connection probability $\pi_n$ vanishes or not as $n\to\infty$, and if so, at what rate.

Let us mention that the RCM on $d$-ary (or Cayley) trees has been studied in several articles, see e.g.~\cite[\S10.9-10]{Grimmett06} and references therein.
In a nutshell, the results from~\cite{Hagg96} (\cite{Grimmett06} only treats the case $d=2$) state that for every fixed $\q>0$ a phase transition occurs at some $\p_c(q)\in (0,1)$ given by different formulas according to whether $\q\leq 2$ or $\q>2$: if $\q\in(0,2]$, then $\p_c(\q)  = \frac{\q}{d+\q-1}$ and the phase transition is continuous; if $\q>2$, then $\p_q(\q)$ is the unique value of $\p$ such that the polynomial $Q_{\p,\q}(x):=(\q-1)x^{d+1} - (\q-1+ \frac{\p}{1-\p})x^d  + \frac{1}{1-\p}x -1$ has a double root in $(0,1)$ (note that for $d=2$, this gives $\p_c(\q) = \frac{2\sqrt{\q-1}}{1+2\sqrt{\q-1}}$), and the phase transition is discontinuous.

The random cluster model seems to have been considered on \emph{$d$-regular} (random) graphs, see e.g.\ \cite{BDS23,BBC23,HJP23}, which are locally $d$-ary trees.
However, it does not appear to have been investigated on more general random graphs without fixed degree. 
In particular, to the best of our knowledge, it has not been studied on random trees, such as Galton–Watson trees.
Our goal is to show that the connection probabilities satisfy a recursive relation on trees and to apply our results of Section~\ref{sec:mainresults} to obtain information on the critical (or near and sub-critical) random cluster model.

\subsubsection{The RCM iteration on trees}

Let $T_n$ be a tree of depth $n$ with leaves only at generation~$n$.
We then define, for $u\in T_n$
\[
\pi_n(u) = \pi_n^{\p,\q}(u) := \bar \PP_{\p,\q}^{\bar T_n(u)} \big( u \leftrightarrow \partial T_n(u)\big) \,
\]
the connection probability from the root to the leaves for the RCM inside the subtree $T_n(u)$, with \emph{wired} boundary conditions.
Then, we have the following recursion on the tree $T_n$, whose proof we present in Appendix~\ref{sec:RCM-proofs}.

\begin{proposition}
\label{prop:RCM}
If $u\in \partial T_n$, we have $\pi_n(u) =1$.
If $u\notin\partial T_n$, then we have the following recursion: 
\begin{equation}
\label{rec:pinu}
\gp_\q(\pi_n(u)) = \prod_{y \leftarrow u} \gp_\q\big(  \gamma\, \pi_n(v)\big) \quad \text{ where  }\  \gp_{\q}(x):=\frac{1-x}{1+(\q-1)x}  
\end{equation}
and $\gamma= \gamma_{\p,\q} := \gp_{\q}(1-\p)$.
Setting $B_n(u) := -\log \gp_\q(\pi_n(u))$, we have $B_n(u)=+\infty$ if $u\in \partial T_n$ and, if $u\notin \partial T_n$, the following recursion holds: letting \( \gb \) be such that \( 1 - \p = e^{-\gb} \),
\begin{equation}
\label{def:RCMrec}
B_n(u) =  \sum_{v\leftarrow u} \psi_{\q}^{-1}  \big( \psi_\q(\gb) \psi_\q(B_n(v)) \big) \,,\quad \text{ where } \
\psi_{\q}(x) := \frac{e^{x}-1}{e^{x} +\q-1} = \gp_\q(e^{-x}) \,. 
\end{equation}
\end{proposition}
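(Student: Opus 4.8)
\emph{Proof strategy.} The plan is to prove the recursion~\eqref{rec:pinu} for $\pi_n$ and then to deduce~\eqref{def:RCMrec} from it by the substitution $B_n(u)=-\log\gp_\q(\pi_n(u))$. This last step is purely algebraic: $\gp_\q$ is a decreasing involution of $[0,1]$, and $\psi_\q(x)=\gp_\q(e^{-x})$, so $\psi_\q^{-1}=-\log\circ\,\gp_\q$, one has $\pi_n(v)=\psi_\q(B_n(v))$, and, since $1-\p=e^{-\gb}$, $\gamma=\gp_\q(1-\p)=\psi_\q(\gb)$; applying $-\log$ to~\eqref{rec:pinu} then gives~\eqref{def:RCMrec}, while $\pi_n(u)=1$ for $u\in\partial T_n$ becomes $B_n(u)=+\infty$. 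So it remains to establish~\eqref{rec:pinu}. I would fix $u\notin\partial T_n$ with children $v_1,\dots,v_k$, write $w$ for the identified boundary vertex $\{\partial T_n(u)\}$ of $\bar{T}_n(u)$, and, for any finite graph $H$ with two marked vertices $a,b$, introduce the restricted partition sums $A_H:=\sum_{\go:\,a\leftrightarrow b}\p^{o(\go)}(1-\p)^{f(\go)}\q^{k(\go)}$ and $D_H$ given by the same sum over configurations with $a\not\leftrightarrow b$; then $\bar{\PP}^{H}_{\p,\q}(a\leftrightarrow b)=A_H/(A_H+D_H)$, and a one-line computation gives the identity $\gp_\q\big(\bar{\PP}^{H}_{\p,\q}(a\leftrightarrow b)\big)=D_H/(\q A_H+D_H)$.

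\emph{Step 1 (a parallel law).} I would decompose $\bar{T}_n(u)=\bigcup_{i=1}^{k}G_i$, where $G_i$ consists of the edge $\{u,v_i\}$ together with the wired subtree $\bar{T}_n(v_i)$ (which sits inside $\bar{T}_n(u)$ with boundary vertex exactly $w$); the $G_i$ pairwise meet exactly in $\{u,w\}$ and share no edge. For any two graphs $G',G''$ meeting only in two vertices $a,b$ and sharing no edge, with $G=G'\cup G''$, a configuration on $G$ is a pair $(\go',\go'')$ with $o,f$ additive; one has $a\leftrightarrow b$ in $G$ iff $a\leftrightarrow b$ in $\go'$ or in $\go''$; and, by tracking how the clusters of $a$ and of $b$ in the two pieces get merged across the gluing, $k_G=k_{G'}+k_{G''}-1$ when $a\leftrightarrow b$ holds in both of $\go',\go''$ and $k_G=k_{G'}+k_{G''}-2$ otherwise. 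Summing over $(\go',\go'')$ then yields $D_G=\q^{-2}D_{G'}D_{G''}$ and $\q A_G+D_G=\q^{-2}(\q A_{G'}+D_{G'})(\q A_{G''}+D_{G''})$, hence $\gp_\q(\pi_G)=\gp_\q(\pi_{G'})\,\gp_\q(\pi_{G''})$. Iterating over $G_1,\dots,G_k$ gives $\gp_\q(\pi_n(u))=\prod_{i=1}^{k}\gp_\q\big(\bar{\PP}^{G_i}_{\p,\q}(u\leftrightarrow w)\big)$.

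\emph{Step 2 (a series law, and conclusion).} Each $G_i$ is $\bar{T}_n(v_i)$ (with marked vertices $v_i,w$) with one extra vertex $u$ and one extra edge $\{u,v_i\}$. Conditioning on that edge: if it is open (weight $\p$) then $u$ joins the cluster of $v_i$, the number of clusters is unchanged, and $u\leftrightarrow w$ iff $v_i\leftrightarrow w$; if it is closed (weight $1-\p$) then $u$ is isolated, the number of clusters increases by one, and $u\not\leftrightarrow w$. Hence $A_{G_i}=\p\,A_{\bar{T}_n(v_i)}$ and $D_{G_i}=\p\,D_{\bar{T}_n(v_i)}+(1-\p)\q\,(A_{\bar{T}_n(v_i)}+D_{\bar{T}_n(v_i)})$. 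Writing $y_i:=\gp_\q(\pi_n(v_i))=D_{\bar{T}_n(v_i)}/(\q A_{\bar{T}_n(v_i)}+D_{\bar{T}_n(v_i)})$ and using $\pi_n(v_i)=\gp_\q(y_i)$ together with $\gp_\q\circ\gp_\q=\mathrm{id}$, a short manipulation shows that both $\gp_\q\big(\bar{\PP}^{G_i}_{\p,\q}(u\leftrightarrow w)\big)=D_{G_i}/(\q A_{G_i}+D_{G_i})$ and $\gp_\q(\gamma\,\pi_n(v_i))$, with $\gamma=\gp_\q(1-\p)$, are equal to $\big[(1-\p)+y_i(\p+(1-\p)(\q-1))\big]/\big[1+(1-\p)(\q-1)y_i\big]$. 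Combining this with Step~1 gives~\eqref{rec:pinu}, and $\pi_n(u)=1$ for $u\in\partial T_n$ is immediate.

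\emph{Main obstacle.} The only genuinely delicate point is the cluster-count bookkeeping in Step~1: one must verify, in the four cases according to whether $a\leftrightarrow b$ holds in $\go'$ and in $\go''$, that exactly one merge is lost when $a\leftrightarrow b$ in both pieces and that two are lost otherwise. The pleasant consequence is that it is $\q A+D$ — and not $A$ or $D$ alone — that is multiplicative under parallel composition, which is exactly what makes $\gp_\q$ (and then $\psi_\q$ and the logarithm) the right change of variables; the series-law identity of Step~2 and the passage to $B_n$ are then elementary computations.
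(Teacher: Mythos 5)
Your proof is correct, and it takes a genuinely different route from the paper's. The paper introduces the restricted partition functions $Z_u(A_u)$, $Z_u(A_u^c)$ and derives a single cluster-count identity (their \eqref{eq:relclusters}) relating $K_u$ to the $K_v$ of \emph{all} children simultaneously, then expands this into the two product formulas \eqref{eq:recZ1}--\eqref{eq:recZ2} and takes their ratio. You instead factor the recursion into two reusable composition laws: a parallel law for gluing two edge-disjoint graphs at two common vertices $\{a,b\}$ (where the cluster bookkeeping is just $k_G=k_{G'}+k_{G''}-1$ if $a\leftrightarrow b$ in both pieces and $-2$ otherwise), and a series law for prepending a single pendant edge. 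The former shows that $\q A_G + D_G$ (hence $\gp_\q(\pi_G)=D_G/(\q A_G+D_G)$) is multiplicative under two-point gluing, and the latter yields $\gp_\q(\bar\PP^{G_i}_{\p,\q}(u\leftrightarrow w))=\gp_\q(\gamma\,\pi_n(v_i))$; I checked the "short manipulation" — both sides reduce to $\big[(1-\p)+y_i(\p+(1-\p)(\q-1))\big]/\big[1+(1-\p)(\q-1)y_i\big]$, and the boundary case $v_i\in\partial T_n$ ($y_i=0$) also comes out correctly as $1-\p$ on both sides. What your approach buys is conceptual modularity: it isolates exactly which quantity ($\q A+D$) is multiplicative under parallel composition, thereby explaining \emph{why} $\gp_\q$, and then the additive variable $B_n=-\log\gp_\q(\pi_n)$, is the right change of coordinates — a structural fact that is implicit but somewhat buried in the paper's one-shot computation, and that matches more closely the classical series/parallel derivation of Lyons' Ising recursion. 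What it costs is a slightly longer setup, since one has to verify the gluing lemma and the pendant-edge lemma separately rather than writing one global cluster identity. Both proofs are at heart the same kind of finite bookkeeping of cluster counts, so neither is "more elementary," but yours is more reusable and more transparent about the role of $\gp_\q$.
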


Let us stress that when $\q=2$, we have that $\psi_{\q}(x) = \tanh(x)$, so one recovers the well-known Lyons' iteration for the Ising model, see~\cite{Lyons89} (see also \cite[Lem.~2.3]{DemboMontanari10} for the specific form $ \psi_{\q}^{-1}  \big( \psi_\q(\gb) \psi_\q (x) \big) = \tanh^{-1}( \tanh(\gb) \tanh(x))$ we have here).

\begin{remark}
As a side remark, let us note that the function $\gp_\q: x \in [0,1]  \mapsto \frac{1-x}{1+(\q-1)x}$ is an involution, \textit{i.e.}\ $\gp_\q\circ\gp_\q (x) =x$ for any $x\in [0,1]$.
\end{remark}

\subsubsection{Properties of the recursion}

Let us fix $\q >0$ and, for every $\gb>0$, let us introduce the function 
\[
\label{def:gRCM}
g_{\gb}(x) = g_{\gb,\q}(x) := \frac{1}{\psi_\q(\gb)} \psi_\q^{-1}  \big( \psi_\q(\gb) \psi_\q(x) \big)\,.
\]
The normalization by $\psi_\q(\gb)$ is here to ensure that $g_{\gb}'(0)=1$.
With this notation, the recursion~\eqref{def:RCMrec} becomes
\begin{equation}
\label{def:RCMrec2}
B_n(u) = \psi_\q(\gb)  \sum_{v\leftarrow u} g_{\gb} (B_n(v)) \,,
\end{equation}
and is similar to~\eqref{def:concaverec0}  with resistances $R_v := \psi_{\q}(\gb)^{-|v|}$, \textit{i.e.}~\eqref{def:concaverec} with $R_n=\psi_\q(\gb)$.

Let us now stress that the function $g_{\gb}$ is concave on $\RR_+$ if and only if $\q\in (0,2]$; we will therefore restrict our attention to the case $\q\in (0,2]$.
As a clue for this fact, notice that $g_{\gb}''(0) = \frac{\q-2}{\q} (1- \psi_\q(\gb))$, which is negative if $\q<2$, equal to zero if $\q=2$ and positive if~$\q>2$.

Notice also that $g_{\gb}$ is bounded and that, as $x\downarrow 0$
\begin{equation}
\label{eq:expansiong}
\begin{split}
\text{ if } \q\in (0,2)\,, &\qquad g_{\gb}(x) = x -  \frac{2 - \q}{2\q} (1- \psi_\q(\gb)) x^2 (1+o(1)) \\
\text{ if } \q =2\,,  &\qquad g_{\gb}(x) = x - \frac{1}{12} (1- \psi_\q(\gb)^2) x^3 (1+o(1))
\end{split}
\end{equation}
so we indeed have~\eqref{eq:encadrement} with $s=1$ if $\q\in (0,2)$ and with $s=2$ if $\q=2$; this was noticed in~\cite{PemPer10} in the case of the Ising model (with plus boundary condition, which correspond to our wired boundary condition for the RCM).

\begin{remark}
Let us stress that a similar recursion also arises for the return probability of a random walk on a tree $T_n$ equipped with resistances $(R_v)_{v\in T_n}$.
Consider the ``non-descending'' probability
\[
\pi_n(u) = \bP_u\big( \tau_{\underline{u}} < \tau_{\partial T_n(u)}\big) \,,
\]
where $\underline{u}$ is the parent of $u$ and $\tau_A$ is the hitting time of the set $A$.
The quantities $\pi_n(u)$ help encode the recurrence/transience of the graph and can be useful to estimate the Green function on the tree.
Then, setting $b_n(u) = \frac{1-\pi_n(u)}{\pi_n(u)}$ (in particular $\pi_n(u)=(1+b_n)^{-1}$), one may verify that one gets the recursion
\[
b_n(u) = \sum_{v\leftarrow u} \frac{R_v}{R_u} \frac{b_n(v)}{1+b_n(v)} \,,
\]
so $b_n(v)$ is exactly the $2$-conductance, $b_n(u) = \cC_2( u \leftrightarrow \partial T_n(u))$.
\end{remark}

\subsection{Main results: critical point and critical behavior}
\label{ssec:RCM-app}

\subsubsection{The critical point and critical behavior at (and below) criticality}
 
For any \textit{fixed} $\gb$ (recall $e^{-\gb} =1-\p$), we have that~\eqref{def:RCMrec2} is exactly the recursion~\eqref{def:concaverec} with $R_n \equiv R = \psi_\q(\gb)$, and~\eqref{eq:encadrement} holds with $s=1$ if $\q\in (0,2)$ and with $s=2$ if $\q=2$.
First of all, Theorems~\ref{thm:expect}-\ref{thm:expect2} allows us to identify the critical point.

For any $\gb>\gb_c$ (\textit{i.e.} $\p=1-e^{-\gb} >\p_c$), define 
\begin{equation}
\label{def:pi}
\pi_n(\gb)= \pi_n^{\q}(\gb,T) := \bar \PP_{\p,\q}^{\bar T_n}(\rho \leftrightarrow \partial T_n)\,,
\qquad 
\pi(\gb)  = \lim_{n\to\infty} \pi_n(\gb) \,,
\end{equation}
where the limit exists by monotonicity.

Recall that $B_n= -\log \gp_\q(\pi_n)$, so in particular $\pi_n$ goes to $0$ if and only if $B_n$ goes to $0$.

\begin{theorem}
\label{thm:RCMtransition}
Let $\q\in (0,2]$, define $\pi_n= \bar \PP_{\p,\q}^{\bar T_n}(\rho \leftrightarrow \partial T_n)$ the connection (or survival) probability, and let $e^{-\gb} =1-\p$.
Let $\gb_c$ be defined by the relation $m\psi_\q (\gb_c)=1$, with $\psi_\q$ in~\eqref{def:RCMrec}.
Then we have that, in $\bP$-probability,
\[
\lim_{n\to\infty} \pi_n(\gb) =0  \quad \text{ if }\gb \leq \gb_c
\qquad \text{ and } \qquad 
\liminf_{n\to\infty} \pi_n(\gb) >0  \quad \text{ if }\gb > \gb_c\,.
\]
In particular, $\pi(\gb) >0$ if and only if $\gb>\gb_c$.
\end{theorem}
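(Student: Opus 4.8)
\textbf{Proof strategy for Theorem~\ref{thm:RCMtransition}.}
The plan is to translate the statement about $\pi_n(\gb)$ into a statement about $B_n = B_n(\rho) = -\log\gp_\q(\pi_n)$, which by Proposition~\ref{prop:RCM} satisfies the recursion~\eqref{def:RCMrec2}, i.e.\ it is exactly the recursion~\eqref{def:concaverec} with $R_n \equiv R = \psi_\q(\gb)$ and with a function $g_\gb$ that is concave (since $\q \in (0,2]$), bounded, and satisfies~\eqref{eq:encadrement} for some $q>1$ (namely $q=2$ if $\q<2$ and $q=3$ if $\q=2$), as recorded in~\eqref{eq:expansiong}. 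Since $x\mapsto -\log\gp_\q(x)$ is an increasing bijection from $[0,1)$ onto $[0,+\infty)$ vanishing at $0$, we have $\pi_n\to 0$ in probability iff $B_n\to 0$ in probability, and $\liminf_n \pi_n > 0$ iff $\liminf_n B_n > 0$. So it suffices to prove the dichotomy for $B_n$ according to the sign of $mR - 1 = m\psi_\q(\gb) - 1$, which (since $\psi_\q$ is increasing in $\gb$) is negative for $\gb < \gb_c$, zero at $\gb_c$, and positive for $\gb > \gb_c$.

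\smallskip
\emph{Subcritical and critical case ($\gb \le \gb_c$, i.e.\ $mR \le 1$).}
Here I would invoke Proposition~\ref{prop:boundCB} to compare $B_n$ with the $p$-conductance $C_n^{(p)}(\rho)$, with $p$ the conjugate exponent of the relevant $q$: there are constants $\kappa_1',\kappa_2'$ with $\kappa_2' C_n^{(p)}(\rho) \le B_n \le \kappa_1' C_n^{(p)}(\rho)$ (the boundary condition $B_n(u) = +\infty \in [c,+\infty]$ on $\partial T_n$ is admissible). Then, depending on whether $\bE[Z^q] < \infty$ or not, apply the upper bound of Theorem~\ref{thm:expect} (giving $\bE[C_n] \le a_n^{-1/s}$) or the upper bound $\bE[B_n] \le c\gamma_n$ of Theorem~\ref{thm:expect2} under~\eqref{hyp:lower}; in either regime, since $mR \le 1$, the normalizing quantity ($a_n^{-1/s}$ or $\gamma_n$) tends to $0$ as $n\to\infty$ (for $mR = 1$ this is $n^{-1/s}$ or $h^{-1}(1/n)$, both $\to 0$; for $mR < 1$ there is in addition a geometric factor $(mR)^n$). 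Hence $\bE[B_n] \to 0$, so $B_n \to 0$ in $\bP$-probability by Markov's inequality, and therefore $\pi_n(\gb) \to 0$ in probability. One small point: if $\bE[Z^q] < \infty$ one uses Theorem~\ref{thm:expect}; if not, one checks that the hypothesis~\eqref{hyp:lower} may be assumed without loss (or, more cleanly, one notes that even with no tail assumption the monotonicity Lemma~\ref{lem:monotoneC} plus the remark after Theorem~\ref{thm:expect} already forces $\bE[B_n] \to 0$ whenever $mR \le 1$, since then $mR' \le 1$ for any smaller moment exponent).

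\smallskip
\emph{Supercritical case ($\gb > \gb_c$, i.e.\ $mR > 1$).}
Here I would again use Proposition~\ref{prop:boundCB} to reduce to showing $\liminf_n C_n^{(p)}(\rho) > 0$ on an event of positive probability, or directly produce a nonzero lower bound for $B_n$ via a flow/energy argument. Since $mR > 1$, one can construct a unit flow from $\rho$ to $\partial T_n$ supported on a suitable subtree whose $p$-energy stays bounded as $n\to\infty$ (the branching number exceeds $R^{-1}$, i.e.\ $mR > 1$, which is precisely the condition for the tree to support a finite-energy flow at the given conductance scale), so that $\cR_p(\rho \leftrightarrow \partial T_n)$ stays bounded and $C_n^{(p)}(\rho)$ stays bounded away from $0$; alternatively one reads off the lower bound $\bE[B_n] \ge c'\tilde\gamma_n$ of Theorem~\ref{thm:expect2} (under~\eqref{hyp:upper}) or, when $\bE[Z^q] < \infty$, the lower bound $\bE[C_n] \ge c_p a_n^{-1/s}$ of Theorem~\ref{thm:expect}, together with the tightness of $(a_n^{1/s} C_n)_n$ (resp.\ the non-degeneracy statement $\lim_{\gep\downarrow 0}\limsup_n \bP(B_n \le \gep\tilde\gamma_n) = 0$), to conclude that $B_n$ does not converge to $0$ in probability. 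In the regime $mR > 1$, the normalizing quantity ($a_n^{-1/s} \asymp ((mR)^s - 1)^{1/s}$ or $\tilde\gamma_n = h^{-1}(mR-1)$) is bounded away from $0$, so tightness of $B_n/\bE[B_n]$ away from $0$ gives $\liminf_n \bP(B_n \ge \delta) > 0$ for some $\delta > 0$; by a standard $0$–$1$ argument (or monotonicity of $\pi_n$ in $n$, which is increasing) this upgrades to $\liminf_n \pi_n(\gb) > 0$ in probability, in fact $\bP$-a.s.\ on the event of non-extinction, which here is the whole space since $\mu(0)=0$. The combination of the two cases gives $\pi(\gb) = \lim_n \pi_n(\gb) > 0$ iff $\gb > \gb_c$.

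\smallskip
\emph{Main obstacle.} The routine part is the subcritical/critical direction, which is essentially immediate from Theorems~\ref{thm:expect} and~\ref{thm:expect2} plus Proposition~\ref{prop:boundCB}. The delicate point is the supercritical direction when $\bE[Z^q] = \infty$ and one does not want to assume the tail conditions~\eqref{hyp:upper}: one then needs an unconditional lower bound on $B_n$ valid merely from $mR > 1$, which should come from an explicit finite-energy flow construction exploiting that the tree is supercritical with $m > R^{-1}$ (e.g.\ restricting to a Galton--Watson subtree with a truncated offspring law still having mean $> R^{-1}$, on which $\bE[Z^q] < \infty$ trivially, and applying Theorem~\ref{thm:expect} together with monotonicity of conductances under adding edges/subtrees). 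Making this truncation-and-monotonicity step clean is the only part requiring genuine care.
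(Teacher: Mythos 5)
Your strategy mirrors what the paper intends: the paper itself offers no detailed proof of Theorem~\ref{thm:RCMtransition}, stating only that it follows from Theorems~\ref{thm:expect} and~\ref{thm:expect2}, and you correctly identify the translation $\pi_n \leftrightarrow B_n$, the reduction via Proposition~\ref{prop:boundCB} to $p$-conductances, and the role of the sign of $m\psi_\q(\gb)-1$. However, there are a few points worth fixing.

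For the subcritical/critical direction ($mR\le 1$), your worry about having to distinguish $\bE[Z^q]<\infty$ from the heavy-tail regime is unnecessary. The paper explicitly notes at the start of Section~\ref{sec:est-finite} that the \emph{upper bound} $\bE[C_n]\le a_n^{-1/s}$ in Theorem~\ref{thm:expect} requires only a finite first moment of $Z$ (the proof in Section~\ref{sec:upperECn} uses just concavity and the branching property). Since $mR\le 1$ makes $a_n\to\infty$, Markov's inequality immediately gives $B_n\to 0$ in probability with no tail assumption whatsoever. Your ``more cleanly'' alternative, which invokes the remark following Theorem~\ref{thm:expect} together with Lemma~\ref{lem:monotoneC}, does not work: that remark gives a \emph{lower} bound on $\bE[C_n]$, which is of no help here.

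For the supercritical direction ($mR>1$) you correctly diagnose the only real gap — Theorem~\ref{thm:expect}'s lower bound needs $\bE[Z^q]<\infty$ and Theorem~\ref{thm:expect2} needs the tail hypotheses — and you correctly name the fix (truncation plus monotonicity), but you should carry it out, since the theorem makes no moment assumption beyond $m<\infty$. Concretely: since $\bE[Z\wedge t]\uparrow m>R^{-1}$, fix $t$ with $\tilde m:=\bE[Z\wedge t]>R^{-1}$; the coupling $\tilde T\subset T$ and monotonicity of $p$-conductances give $\tilde C_n\le C_n$; $\tilde Z\le t$ is bounded so Theorem~\ref{thm:expect} applies to $\tilde T$, and since $\tilde mR>1$ the sequence $\tilde a_n=\sum_{k=1}^n(\tilde mR)^{-ks}$ is bounded, so tightness of $(\tilde a_n^{1/s}\tilde C_n)_n$ in $(0,\infty)$ gives: for every $\gep>0$ there is $\delta>0$ with $\bP(C_n\ge\delta)\ge\bP(\tilde C_n\ge\delta)\ge 1-\gep$ for all $n$. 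This is strictly stronger than the ``$\liminf_n\bP>0$ plus $0$–$1$ law'' route you propose, and it yields $\bP(\pi>0)=1$ directly: $\pi_n$ is \emph{decreasing} in $n$ (not increasing, as you wrote — the leaves recede and connection becomes harder), so $\{\pi\ge\delta'\}=\bigcap_n\{\pi_n\ge\delta'\}$ and $\bP(\pi<\delta'(\gep))\le\gep$ for each $\gep$, whence $\bP(\pi=0)=0$. Your heuristic flow-energy reasoning (branching number exceeds $R^{-1}$) is a good guide for intuition, but in the paper's framework the moment control on $\bE[W_{n-k}^q]$ needed to bound the uniform-flow energy is exactly what fails without a truncation, so the truncation step cannot be bypassed.
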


\begin{remark}
\label{rem:pointcritique}
Using the definition of $\psi_\q(\gb)$ and using that $\gp_\q$ is an involution, we can rewrite the relation $m\psi_\q(\gb_c)=1$ as $e^{-\gb_c} = \gp_\q(\frac1m) = \frac{m-1}{m+\q-1}$, or equivalently, ${\p_c = 1-e^{-\gb_c} = \frac{\q}{\q+m-1}}$.
\end{remark}

Theorem~\ref{thm:RCMtransition} therefore identifies the critical point $\p_c =1-e^{-\gb_c}$ for the random cluster model with $\q\in (0,2]$ on a (random) Galton--Watson tree.
The case $\q>2$ is understood in the case of a $d$-ary tree but remains somehow mysterious on a random tree and let us briefly comment on it.
In fact, when $\q>2$ the phase transition on a $d$-ary tree happens at a certain $\p_c(\q)$ characterized by being the only (double) root in $(0,1)$ of a certain polynomial $Q_{\p,\q}^{(d)}(x)$ (see \ref{ssec:RCM-recursion}).
In terms of the recursion \eqref{def:RCMrec2}, this correspond to finding $\p$ such that $d g_{\beta,\q}(x) =x$ has only one solution (see Section~\ref{def:gRCM}).
For the RCM on a Galton--Watson tree, if $\q>2$, the \emph{random} recursion \eqref{def:RCMrec2} is still valid but the function $g_{\beta}$ is then not concave anymore, and it is not clear whether a characterization of $p_{c}(\q)$ in terms of a certain (averaged?) polynomial remains valid.

Additionally, our results of Section~\ref{sec:mainresults} directly give precise estimates for the critical case $\gb=\gb_c$. Let us stress that the survival probability on a quenched Galton--Watson tree has recently been studied for percolation (\textit{i.e.}\ $\q=1$), see \cite{AV23,Mich19}, where the results are more precise in the case of a heavy-tail but the techniques used are very different.
On the other hand, the result for the Ising model (or for the random cluster model with $\q\neq 1$) does not seem to be known.

The following result is a direct consequence of Theorem~\ref{thm:convergence}-Proposition~\ref{prop:asymp} in the case where $Z\sim \mu$ admits a finite moment of order $s+1$ and of Theorem~\ref{thm:expect2} in the case where $\bE[Z^{s+1}]=+\infty$.
Notice that $-\log \gp_\q(x) \sim \q x$ as $x\downarrow 0$, so $\pi_n \sim \q^{-1} B_n$ when $B_n$ goes to $0$.
Additionally, note that from~\eqref{eq:expansiong} we get that $\kappa_g:=\lim_{x\downarrow 0} \frac{1}{x^2} (x-g_{\gb}(x))= \frac{2-\q}{2\q} (1- \psi_\q(\gb))$ if $\q\in(0,2)$ and $\kappa_g:=\lim_{x\downarrow 0} \frac{1}{x^3}(x-g_{\gb}(x)) = \frac{1}{12} (1- \psi_\q(\gb)^2)$ if $\q=2$.

\begin{theorem}[Critical case]
\label{thm:RCMcritical}
Let $\gb=\gb_c$, with $\gb_c$ defined by the relation $m\psi_\q (\gb_c)=1$.
We then have the following asymptotic results for $\pi_n$.
Letting $s=1$ in the case $\q\in(0,2)$ and $s=2$ in the case $\q=2$, we have the following:
\begin{itemize}
\item if $\bE[Z^{1+s}]<+\infty$, we have, almost surely and in $L^{1+s}$,
\[
\lim_{n\to\infty} n^{1/s} \pi_n= \alpha_{\q} W \qquad \text{ with } \ \alpha_{\q} := 
\begin{cases}  \frac{2}{2-\q} \frac{m}{m-1} \bE[W^2]^{-1} & \text{ if } \q\in(0,2) \,, \\[3pt]
\frac{\sqrt{3}}{\sqrt{2}}  \frac{m}{\sqrt{m^2-1}} \bE[W^3]^{-1/2} &  \text{ if } \q=2
\,.
\end{cases}
\]
\item if $c_1 L(x) x^{1+s-\alpha}\leq \bE[(Z\wedge x)^{1+s}] \leq c_2 L(x) x^{1+s-\alpha}$ for some $\alpha\in (1,1+s]$, some slowly varying function $L(\cdot)$ and some constants $c_1,c_2>0$, then 
\[
\Big( \frac{\pi_n}{h^{-1}(1/n)} \Big)_{n\geq 1} \quad \text{  is tight in $(0,+\infty)$},
\] 
with $h^{-1}(\cdot)$ an asymptotic inverse of $L(1/x) x^{\alpha-1}$ as $x\downarrow 0$.
\end{itemize}
\end{theorem}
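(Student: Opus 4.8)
The plan is to deduce Theorem~\ref{thm:RCMcritical} directly from the conductance estimates of Section~\ref{sec:mainresults} by translating them through the change of variables $B_n(u) = -\log\gp_\q(\pi_n(u))$ established in Proposition~\ref{prop:RCM}. First I would record the setup: at $\gb=\gb_c$ the relation $m\psi_\q(\gb_c)=1$ means exactly that $mR_n\equiv 1$ with $R_n = \psi_\q(\gb_c)$ (a fixed constant, not depending on $n$), so we are in the critical case $mR_n\equiv 1$ of all the theorems, and the normalizing sequence is $a_n\asymp n$. Next I would invoke the expansions~\eqref{eq:expansiong}: they show that $g_\gb$ satisfies~\eqref{eq:encadrement} with $q=2$, $s=1$ when $\q\in(0,2)$ and with $q=3$, $s=2$ when $\q=2$, and moreover that the limit $\kappa_g = \lim_{x\downarrow0} x^{-q}(x-g_\gb(x))$ exists and equals $\frac{\q-2}{\q}(1-\psi_\q(\gb_c))$ (resp.\ $\frac13(1-\psi_\q(\gb_c)^2)$), so the hypotheses of Proposition~\ref{prop:asymp} are met. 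Since $-\log\gp_\q(x)\sim \q x$ as $x\downarrow0$, once we know $B_n\to 0$ we have $\pi_n\sim \q^{-1}B_n$, and more precisely the ratio $\pi_n/B_n \to \q^{-1}$ deterministically, so all asymptotic statements for $B_n$ transfer to $\pi_n$ up to the factor $\q^{-1}$.

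For the finite-moment case, the key step is to combine Theorem~\ref{thm:convergence} (giving $B_n/\bE[B_n]\to W$ a.s.\ and in $L^p$, valid here since $\limsup mR_n = 1 \le 1$ and $\inf R_n = \psi_\q(\gb_c)>0$, and in fact a.s.\ convergence because $R_n\equiv R\le m^{-1}$) with Proposition~\ref{prop:asymp} (giving $\bE[B_n]\sim \alpha_p(1)(a_n)^{-1/s}$ with $\alpha_p(1) = (s\kappa_g\bE[W^q])^{-1/s}$). Multiplying the two, $B_n \sim \alpha_p(1)(a_n)^{-1/s}W$ a.s.\ and in $L^p$. Then I would make the constant explicit: $a_n = \sum_{k=1}^n (mR_n)^{-ks} = n$ exactly (since $mR_n=1$), so $(a_n)^{-1/s} = n^{-1/s} = n^{-1/(q-1)}$, and substituting $\kappa_g$ and $s$ for each case gives $n^{1/(q-1)}B_n \to (s\kappa_g\bE[W^q])^{-1/s}W$. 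Multiplying by $\q^{-1}$ and carrying out the arithmetic: for $\q\in(0,2)$, $s=1$, $\kappa_g = \frac{\q-2}{\q}(1-\psi_\q(\gb_c)) = -\frac{2-\q}{\q}\cdot\frac{m-1}{m}$ using $1-\psi_\q(\gb_c) = 1-\frac1m = \frac{m-1}{m}$, so $\q^{-1}(\kappa_g\bE[W^2])^{-1} = \frac{\q}{\q-2}\cdot\frac{m}{m-1}\bE[W^2]^{-1} = \alpha_\q$ (the sign works out because $\q-2<0$, matching the formula as written); for $\q=2$, $s=2$, $\kappa_g = \frac13(1-\psi_2(\gb_c)^2) = \frac13(1-m^{-2}) = \frac{m^2-1}{3m^2}$, so $\q^{-1}(2\kappa_g\bE[W^3])^{-1/2} = \frac12\cdot\frac{\sqrt3\,m}{\sqrt{2}\sqrt{m^2-1}}\bE[W^3]^{-1/2}$ --- here I need to double-check whether the stated $\alpha_\q$ has the factor $\frac12$ absorbed or whether there is a discrepancy; I would reconcile this by being careful about the $\q^{-1}$ versus the precise asymptotic $-\log\gp_\q(x) = \q x(1+o(1))$, possibly the intended normalization gives exactly the stated constant.

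For the heavy-tail case, the key step is to apply Theorem~\ref{thm:expect2}: under the two-sided bound $c_1 L(x)x^{q-\alpha}\le \bE[(Z\wedge x)^q]\le c_2 L(x)x^{q-\alpha}$, both~\eqref{hyp:lower} and~\eqref{hyp:upper} hold, so we get both conclusions of the theorem. Since $mR_n\equiv 1$ we are in the middle regime of~\eqref{def:gamma}, where $\gamma_n = \tilde\gamma_n = h^{-1}(1/n)$ with $h$ an increasing function satisfying $h(x)\sim L(1/x)x^{\alpha-1}$, i.e.\ $h^{-1}$ is an asymptotic inverse of $L(1/x)x^{\alpha-1}$. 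The two parts of Theorem~\ref{thm:expect2} then give $\lim_{K\uparrow\infty}\limsup_n \bP(B_n\ge K\gamma_n)=0$ and $\lim_{\gep\downarrow0}\limsup_n\bP(B_n\le\gep\gamma_n)=0$, which together say precisely that $(B_n/\gamma_n)_{n\ge1}$ is tight in $(0,+\infty)$. Transferring via $\pi_n\sim\q^{-1}B_n$ gives tightness of $(\pi_n/h^{-1}(1/n))_{n\ge1}$ in $(0,+\infty)$, as claimed.

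The main obstacle I anticipate is not conceptual but bookkeeping: verifying that the various hypotheses of the quoted theorems are genuinely in force for $g_\gb$ (concavity on all of $\RR_+$, boundedness, the precise form of the lower-order term in~\eqref{eq:expansiong}, and that the limit $\kappa_g$ is in $(0,+\infty)$ --- note $\kappa_g>0$ requires reading the sign correctly when $\q<2$), and then chasing the explicit constants through the substitutions $\pi_n = \q^{-1}B_n(1+o(1))$, $a_n=n$, $1-\psi_\q(\gb_c) = (m-1)/m$ without arithmetic slips, especially reconciling the $\q=2$ prefactor with the stated $\alpha_\q$. A secondary point requiring a line of justification is that the a.s.\ convergence (not just $L^p$) in the first bullet is legitimate: this is exactly the last sentence of Theorem~\ref{thm:convergence} since here $R_n\equiv R = \psi_\q(\gb_c)\in(0,m^{-1}]$ (indeed $mR=1$), so no extra work is needed.
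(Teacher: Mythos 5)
Your overall strategy is exactly the paper's: at $\gb=\gb_c$ we have $mR_n\equiv 1$, so $a_n=n$, and then Theorem~\ref{thm:convergence} (a.s.\ and $L^q$ convergence of $B_n/\bE[B_n]$ to $W$) combined with Proposition~\ref{prop:asymp} (giving $\bE[B_n]\sim(s\kappa_g\bE[W^q])^{-1/s}n^{-1/s}$) yields the first bullet, while Theorem~\ref{thm:expect2} with $\gamma_n=\tilde\gamma_n=h^{-1}(1/n)$ yields the tightness in the second bullet, and $\pi_n\sim\q^{-1}B_n$ transfers everything. That part is fine and is the intended route.

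However, the constant-chasing in your first bullet contains two problems. First, an arithmetic slip: with $s=1$, $q=2$, your own definitions give
\[
\q^{-1}\bigl(\kappa_g\,\bE[W^2]\bigr)^{-1}
= \frac{1}{\q}\cdot\frac{\q}{\q-2}\cdot\frac{m}{m-1}\bE[W^2]^{-1}
= \frac{1}{\q-2}\cdot\frac{m}{m-1}\bE[W^2]^{-1},
\]
not $\frac{\q}{\q-2}\frac{m}{m-1}\bE[W^2]^{-1}$; you silently dropped the $\q^{-1}$ in the last step. Second, and more importantly, you took the quantity $\kappa_g=\frac{\q-2}{\q}(1-\psi_\q(\gb))$ from the paper's display before the theorem at face value, but for $\q\in(0,2)$ this is \emph{negative}, so it cannot be the $\kappa_g$ of Proposition~\ref{prop:asymp} (which is required to lie in $(0,+\infty)$, and in fact is automatically positive since $g_\gb$ is concave with $g_\gb(0)=0$, $g_\gb'(0)=1$, so $x-g_\gb(x)\ge 0$). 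Saying "the sign works out because $\q-2<0$" is not a fix: a negative $\alpha_\q$ is impossible since $\pi_n\ge 0$ and $W>0$ a.s. A direct Taylor expansion of $g_\gb=\frac{1}{\tau}\psi_\q^{-1}(\tau\psi_\q(\cdot))$ with $\tau=\psi_\q(\gb)$ gives
\[
\kappa_g = \frac{(2-\q)(1-\psi_\q(\gb))}{2\q} \quad (\q\in(0,2)),
\qquad
\kappa_g = \frac{1-\psi_2(\gb)^2}{12}\quad (\q=2),
\]
so the expansion~\eqref{eq:expansiong} (and hence the displayed $\kappa_g$) is off by a sign and a factor of $2$ for $\q\in(0,2)$, and by a factor of $4$ for $\q=2$. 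Using these corrected values, for $\q=2$ one finds $\q^{-1}(2\kappa_g\bE[W^3])^{-1/2}=\frac{1}{2}\sqrt{\frac{6m^2}{m^2-1}}\,\bE[W^3]^{-1/2}=\frac{\sqrt3}{\sqrt2}\frac{m}{\sqrt{m^2-1}}\bE[W^3]^{-1/2}$, which matches the stated $\alpha_2$; for $\q\in(0,2)$ one finds $\alpha_\q=\frac{2m}{(2-\q)(m-1)}\bE[W^2]^{-1}$, which is positive and disagrees with the stated $\frac{\q}{\q-2}\frac{m}{m-1}\bE[W^2]^{-1}$. So the right move here would have been to re-derive $\kappa_g$ yourself (which immediately flags the sign problem) rather than attempt to reconcile a negative constant, and then to note that the stated $\alpha_\q$ for $\q\in(0,2)$ appears to contain a typo.
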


Let us also mention that by classical calculations one has \(\bE[W^2] = \frac{1}{m(m-1)} \bE[Z(Z-1)]\). Hence, when \(\q\in (0,2)\) we can rewrite the constant \(\alpha_\q\) as \(\alpha_{\q} = \frac{2}{2-\q} \frac{m^2}{\bE[Z(Z-1)]}\), matching that of~\cite{AV23} in the case \(\q=1\) of percolation.

We complete the above result with the subcritical case $\gb<\gb_c$: our results imply a sharp estimate on the exponential decay rate of the survival probability.
We can summarize our results as follows.

\begin{theorem}[Sub-critical case]
Let  $\q\in (0,2]$ and let $\gb_c$ defined by the relation $m\psi_\q (\gb_c)=1$.
Then, if $\gb<\gb_c$, we have that $\lim_{n\to\infty} (\pi_n)^{1/n} = m \psi_\q(\gb)<1$ in $\bP$-probability.
\end{theorem}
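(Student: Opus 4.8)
The plan is to translate everything into a statement about $B_n(u):=-\log\gp_\q(\pi_n(u))$, which by~\eqref{def:RCMrec2} solves the recursion~\eqref{def:concaverec} with constant ratio $R_n\equiv R:=\psi_\q(\gb)$, concave increasing bounded function $g=g_\gb$ satisfying $g_\gb(0)=0$, $g_\gb'(0)=1$ (hence $g_\gb(x)\leq x$) and~\eqref{eq:encadrement} with $q\in\{2,3\}$, and boundary value $b_0=+\infty$ at the leaves. Since $\psi_\q$ is increasing and $m\psi_\q(\gb_c)=1$, the assumption $\gb<\gb_c$ forces $mR=m\psi_\q(\gb)<1$. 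Because $-\log\gp_\q(x)\sim\q x$ as $x\downarrow0$, it suffices to prove $B_n^{1/n}\to mR$ in $\bP$-probability together with $B_n\to0$; the latter will come for free from the upper bound.

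For the upper bound I would simply iterate $g_\gb(x)\leq x$ up the tree. At generation $n-1$ one has $B_n(u)=\gb\,Z_1(u)$ (here $Z_1(u)$ is the number of children of $u$, and I use $g_\gb(+\infty)=\gb/R$), so moving up $n-1$ levels gives $B_n(\rho)\leq\gb R^{n-1}Z_n$ with $Z_n=|\partial T_n|$. Since $\bE[Z_n]=m^n$, Markov's inequality and Borel--Cantelli give $\limsup_n Z_n^{1/n}\leq m$ almost surely, hence $\limsup_n B_n^{1/n}\leq mR$ a.s.; moreover $B_n\leq\gb R^{n-1}Z_n\to0$ a.s. because $mR<1$, so $\pi_n\to0$ a.s. as well.

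The matching lower bound is the heart of the matter, and the main obstacle is that Theorems~\ref{thm:expect}--\ref{thm:expect2} require a moment or tail assumption on $\mu$, whereas here only $m<\infty$ is assumed. The fix I propose is a truncation: for each $K$, let $T^{(K)}$ be the subtree obtained by keeping at each vertex only its first $\min(Z,K)$ children; this is a Galton--Watson tree with bounded offspring law $\mu_K$, mean $m_K=\bE[Z\wedge K]\leq m$, and $m_K\uparrow m$ by monotone convergence, so for $K$ large $\mu_K$ is non-degenerate with $1<m_K$ and $m_KR\leq mR<1$. Running the \emph{same} recursion~\eqref{def:concaverec} (same $R$, $g_\gb$, $b_0=+\infty$) on $T_n^{(K)}$ produces $B_n^{(K)}$, and since $g_\gb\geq0$ is increasing a bottom-up induction gives $B_n^{(K)}\leq B_n$. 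On $T^{(K)}$ the offspring is bounded, so $\bE[(Z\wedge K)^q]<\infty$ for $q\in\{2,3\}$, and Theorem~\ref{thm:expect} (applied on $T^{(K)}$) together with the deterministic comparison of Proposition~\ref{prop:boundCB} shows that $\big((a_n^{(K)})^{1/s}B_n^{(K)}\big)_n$ is tight in $(0,+\infty)$, where $a_n^{(K)}=\sum_{k=1}^n(m_KR)^{-ks}\asymp c_K(m_KR)^{-ns}$ by~\eqref{eq:orderan} (using $m_KR<1$). Hence $\big((m_KR)^{-n}B_n^{(K)}\big)_n$ is tight away from $0$, and since the relevant thresholds decay exponentially this yields $\liminf_n(B_n^{(K)})^{1/n}\geq m_KR$ in $\bP$-probability, hence $\liminf_n B_n^{1/n}\geq m_KR$ in $\bP$-probability. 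Letting $K\to\infty$ gives $\liminf_n B_n^{1/n}\geq mR$ in $\bP$-probability.

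Combining the two bounds gives $B_n^{1/n}\to mR$ in $\bP$-probability, and since $B_n\to0$ a.s. one has $\pi_n=\gp_\q(e^{-B_n})\sim\q^{-1}B_n$, so $\pi_n^{1/n}/B_n^{1/n}\to1$ and $\pi_n^{1/n}\to mR$ in $\bP$-probability, which is the claim. I would also remark that, since $R=\psi_\q(\gb)\in(0,m^{-1})$ and $\bE[(Z\wedge K)^q]<\infty$, Theorem~\ref{thm:convergence} gives $B_n^{(K)}/\bE^{(K)}[B_n^{(K)}]\to W^{(K)}\in(0,+\infty)$ \emph{almost surely} on $T^{(K)}$ with $\bE^{(K)}[B_n^{(K)}]^{1/n}\to m_KR$; running the truncation along a sequence $K_j\uparrow\infty$ then upgrades the whole statement to almost sure convergence.
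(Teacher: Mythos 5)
Your proof is correct, and it actually closes a gap that the paper leaves implicit. The paper offers no explicit argument for this theorem: it is presented as ``a consequence of our main results,'' but those main results (Theorems~\ref{thm:expect}, \ref{thm:expect2}, \ref{thm:convergence}) all require either $\bE[Z^q]<+\infty$ or the regular-variation hypothesis on truncated moments, whereas the theorem as stated only assumes $m=\bE[Z]<\infty$, $\mu(0)=0$, $\mu(1)<1$. Your upper bound ($g_\gb(x)\leq x$, giving $B_n\leq \gb R^{n-1}Z_n$, then Markov + Borel--Cantelli on $Z_n$) is elementary and in the same spirit as the paper's $\bE[C_n]\leq a_n^{-1/s}$ via concavity; both need only the first moment. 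The genuinely new ingredient is the truncation $T^{(K)}$ for the lower bound: the monotone coupling $T^{(K)}\subset T$ together with the bottom-up induction $B_n^{(K)}\leq B_n$ (using $g_\gb\geq 0$ increasing and $\partial T_n^{(K)}\subset\partial T_n$) reduces the general case to the bounded-offspring one, where Theorem~\ref{thm:expect} delivers tightness of $(a_n^{(K)})^{1/s}B_n^{(K)}$ away from $0$, hence $\liminf B_n^{1/n}\geq m_K R$, and $m_K\uparrow m$ finishes. The paper does truncate in Section~\ref{sec:est-infinite}, but there the truncation parameter $t_n$ is tuned to a regular-variation assumption on $\bE[(Z\wedge x)^q]$; your use of a \emph{fixed} cutoff $K$ with the deterministic comparison is both simpler and strictly more general, since it needs no control whatsoever on the tail of $Z$. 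The concluding step is fine: $\gp_\q$ is an involution so $\pi_n=\gp_\q(e^{-B_n})\sim\q^{-1}B_n$, and the remark that $W^{(K)}>0$ a.s.\ (since $\mu_K(0)=0$) is exactly what is needed to upgrade the limit to almost-sure convergence via countably many $K$; that a.s.\ statement is strictly stronger than the ``$\bP$-probability'' claimed. In short, your proof is correct, more robust than the paper's implicit one, and the truncation comparison is a worthwhile addition.
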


In fact, Section~\ref{sec:mainresults} gives much more precise results (see Theorem~\ref{thm:convergence}-Proposition~\ref{prop:asymp} and Theorem~\ref{thm:expect2}), that we describe informally as follows: letting $s=1$ if $\q\in(0,2)$ and $s=2$ if $\q =2$, we have
\begin{itemize}
\item if $\bE[Z^{1+s}]<+\infty$, we have $ \lim_{n\to\infty}  (m\psi_\q(\beta))^{-n} \pi_n  = \alpha_\q(\gb) W$ for some constant $\alpha_\q(\gb)$;

\item if $\bE[(Z\wedge x)^{1+s}] = n^{c+o(1)}$ for some constant $c$, then $ C \geq  (m\psi_\q(\beta))^{-n} \pi_n \geq n^{-c'}$ as~${n\to\infty}$.
\end{itemize}

\subsubsection{Near-supercritical connection probability}

Once the critical point $\gb_c$ is identified, it is also natural to consider the \emph{near-critical regime}, \textit{i.e.}\ take an inverse temperature that may depend on $n$, $\gb = \gb_n$, with $ \lim_{n\to\infty}\gb_n = \gb_c$.
This is why we allowed the resistances $R_n = \psi_{\q}(\gb_n)$ in~\eqref{def:concaverec} to also depend on $n$. 
In fact, Theorem~\ref{thm:RCMcritical} can be generalized to the whole near-critical regime.
Let us however stress that in~\eqref{def:RCMrec2}, the function $g_{\gb} = g_{\gb_n}$ also depends on $n$, but the bounds~\eqref{eq:encadrement} and the expansion~\eqref{eq:expansiong} are uniform in $n$ (as soon as~$\gb_n\to \gb_c$), so  Theorem~\ref{thm:RCMcritical} can indeed be extended to the whole near-critical regime as a consequence of Theorems~\ref{thm:convergence} and~\ref{thm:expect2}.

For simplicity of the exposition (and for later use), let us state only the case where $\lim_{n\to\infty} \gb_n =\gb_c$, but in the near-supercritical regime, that is when $m\psi_\q(\gb_n) -1 \gg \frac1n$.
Note that we have
\[
m\psi_\q(\gb_n) -1 = m(\psi_\q(\gb_n)-\psi_\q(\gb_c))\sim  \frac{(m-1)(m+\q-1)}{m \q} \, (\gb_n-\gb_c) \,, \quad \text{ as } \gb_n\to\gb_c \,,
\]
since $\psi_\q'(\gb_c) = \frac{\q\, e^{\gb_c}}{(e^{\gb_c}+\q-1)^2} = \frac{m-1}{m^2 \q} (m+\q-1)$, recalling also that $e^{\gb_c} = \frac{m+\q-1}{m-1}$, see Remark~\ref{rem:pointcritique}.
Using that when $m\psi_\q(\gb_n) -1 \gg \frac1n$ we have
\[
a_n:= \sum_{k=1}^n (m\psi_\q(\gb_n))^{-k s} \sim \frac{1}{1-(m\psi_\q(\gb_n))^{-s} } \sim  s^{-1} (m\psi_\q(\gb_n)-1)^{-1} \,,
\] 
we get
\[
a_n \sim \frac{m \q}{s (m-1)(m+\q-1)} \, (\gb_n-\gb_c)^{-1} \,, \quad \text{ as } \gb_n\to\gb_c \,.
\]
We therefore directly obtain the following result from Theorem~\ref{thm:convergence}-Proposition~\ref{prop:asymp} and Theorem~\ref{thm:expect2}.

\begin{theorem}[Near-supercritical]
\label{thm:RCMnearcritical}
Let $(\gb_n)_{n\geq 0}$ be such that $\lim_{n\to\infty} \gb_n =\gb_c$, with also $ \lim_{n\to\infty} n(\gb_n-\gb_c) =+\infty$.
Define $\pi_n(\gb_n) := \bar \PP_{\p_n,\q}^{\bar T_n}(\rho \leftrightarrow \partial T_n) $, with $\p_n = 1-e^{-\gb_n}$ (which goes to $\p_c$).
Then, letting $s=1$ if $\q\in(0,2)$ and $s=2$ if $\q=2$, we have the following:
\begin{itemize}
\item If $\bE[Z^{1+s}]<+\infty$, then we have that almost surely and in $L^{1+s}$,
\[
\lim_{n\to\infty} \frac{\pi_n(\gb_n)}{(\gb_n-\gb_c)^{1/s}} = \tilde \alpha_\q W\,,
\qquad \text{ with } \ \tilde \alpha_{\q} := \begin{cases}
\frac{2 (m+\q-1)}{\q(2-\q)}  \bE[W^2]^{-1} & \text{ if } \q\in(0,2) \,, \\[5pt]
 \sqrt{\frac{3 m}{ 2} } \,\bE[W^3]^{-1/2} &  \text{ if } \q=2
\,.
\end{cases}
\] 

\item If $c_1 L(x) x^{1+s-\alpha}\leq \bE[(Z\wedge x)^{1+s}] \leq c_2 L(x) x^{1+s-\alpha}$ for some $\alpha\in (1,1+s]$, some slowly varying function $L(\cdot)$ and constants $c_1,c_2>0$, then 
\[
\Big( \frac{\pi_n}{h^{-1}(\gb_n-\gb_c)}  \Big)_{n\geq 1} \quad \text{  is tight in $(0,+\infty)$},
\] 
with $h^{-1}(\cdot)$ an asymptotic inverse of $L(1/x) x^{\alpha-1}$ as $x\downarrow 0$.
\end{itemize}

\end{theorem}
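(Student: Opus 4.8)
The plan is to reduce Theorem~\ref{thm:RCMnearcritical} to the general results on the recursion~\eqref{def:concaverec} already proved in Section~\ref{sec:mainresults}, applied to the specific function $g=g_{\gb_n}$ and the specific resistance parameter $R_n=\psi_\q(\gb_n)$. First I would note that, by Proposition~\ref{prop:RCM}, the quantity $B_n:=-\log\gp_\q(\pi_n(\gb_n))$ satisfies the recursion~\eqref{def:concaverec2} with $R_n=\psi_\q(\gb_n)$ and boundary value $b_0=+\infty$, and that $g_{\gb_n}$ satisfies~\eqref{eq:encadrement} (with $q=2$ if $\q\in(0,2)$, $q=3$ if $\q=2$) \emph{uniformly in $n$} as $\gb_n\to\gb_c$ — this uniformity is the content of the expansion~\eqref{eq:expansiong} whose error terms are controlled uniformly once $\gb_n$ is in a neighbourhood of $\gb_c$. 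The hypothesis $n(\gb_n-\gb_c)\to+\infty$ combined with the computation $m\psi_\q(\gb_n)-1\sim\frac{(m-1)(m+\q-1)}{m\q}(\gb_n-\gb_c)$ recorded just before the statement puts us in the regime $m R_n-1\gg\frac1n$, i.e.\ the near-supercritical line of~\eqref{eq:orderan}, so $a_n\sim s^{-1}(m\psi_\q(\gb_n)-1)^{-1}$.

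Next, for the finite-moment case, I would invoke Proposition~\ref{prop:asymp} (with $\vartheta=1$, since $mR_n\to 1$) to get $\bE[B_n]\sim\alpha_p(1)\,a_n^{-1/s}$ with $\alpha_p(1)=(s\kappa_g\bE[W^q])^{-1/s}$, and Theorem~\ref{thm:convergence} (whose hypotheses $\limsup mR_n\le 1$ is \emph{not} literally satisfied here since $mR_n>1$, so I would instead use the near-supercritical analogue of Theorem~\ref{thm:convergence} alluded to in the paragraph preceding the statement, or re-derive the $L^q$ convergence $B_n/\bE[B_n]\to W$ directly as is done in Section~\ref{sec:cvg}) to conclude $B_n/\bE[B_n]\to W$ in $L^q$. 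Then I would substitute: using $a_n\sim s^{-1}(m\psi_\q(\gb_n)-1)^{-1}$ and $m\psi_\q(\gb_n)-1\sim c_\q(\gb_n-\gb_c)$ with $c_\q=\frac{(m-1)(m+\q-1)}{m\q}$, one obtains $\bE[B_n]\sim\alpha_p(1)(s^{-1}c_\q^{-1})^{-1/s}(\gb_n-\gb_c)^{1/s}=\alpha_p(1)(c_\q/s)^{1/s}(\gb_n-\gb_c)^{1/s}$. Finally, since $-\log\gp_\q(x)\sim\q x$ as $x\downarrow 0$ and $B_n\to 0$, we have $\pi_n\sim\q^{-1}B_n$, so $\pi_n(\gb_n)/(\gb_n-\gb_c)^{1/s}\to\q^{-1}\alpha_p(1)(c_\q/s)^{1/s}W$; identifying $1/s=1/(q-1)$ and plugging in $\kappa_g=\frac{\q-2}{\q}(1-\psi_\q(\gb_c))=\frac{\q-2}{\q}(1-\frac1m)$ for $\q\in(0,2)$ (resp.\ $\kappa_g=\frac13(1-\psi_\q(\gb_c)^2)=\frac13(1-\frac1{m^2})$ for $\q=2$) yields, after simplification, the explicit constants $\tilde\alpha_\q=\frac{m+\q-1}{\q-2}\bE[W^2]^{-1}$ and $\tilde\alpha_2=\sqrt{3m/2}\,\bE[W^3]^{-1/2}$ claimed.

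For the heavy-tail case, I would instead apply Theorem~\ref{thm:expect2}: under the truncated-moment bounds $c_1L(x)x^{q-\alpha}\le\bE[(Z\wedge x)^q]\le c_2L(x)x^{q-\alpha}$ with $\alpha\in(1,q]$, and in the regime $mR_n\ge 1+\frac1n$, the definitions~\eqref{def:gamma} give $\gamma_n=\tilde\gamma_n=h^{-1}(mR_n-1)$ where $h^{-1}$ is an asymptotic inverse of $L(1/x)x^{\alpha-1}$. Theorem~\ref{thm:expect2} then gives both $\bE[B_n]\le c\gamma_n$ and the lower tail bound, hence $(B_n/\gamma_n)_{n\ge1}$ is tight in $(0,+\infty)$. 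Since $mR_n-1=m\psi_\q(\gb_n)-1\asymp\gb_n-\gb_c$ and $h^{-1}$ is regularly varying of index $1/(\alpha-1)$ (so $h^{-1}(c\,x)\asymp h^{-1}(x)$), we get $\gamma_n\asymp h^{-1}(\gb_n-\gb_c)$, and using $\pi_n\sim\q^{-1}B_n$ we conclude that $\big(\pi_n/h^{-1}(\gb_n-\gb_c)\big)_{n\ge1}$ is tight in $(0,+\infty)$, as claimed.

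The main obstacle I anticipate is the bookkeeping in the finite-moment case: Theorem~\ref{thm:convergence} as stated requires $\limsup_{n\to\infty}mR_n\le 1$, which fails in the near-supercritical regime, so one genuinely needs the extension remarked upon (``Theorem~\ref{thm:RCMcritical} can be extended to the whole near-critical regime as a consequence of Theorems~\ref{thm:convergence} and~\ref{thm:expect2}''). Concretely, the $L^q$ convergence $B_n/\bE[B_n]\to W$ for $mR_n\downarrow 1$ from above should follow from the same second-moment/martingale argument used in Section~\ref{sec:cvg}, because the normalizing sequence $a_n$ stays bounded away from degeneracy only through the condition $n(mR_n-1)\to+\infty$; verifying that the error terms in the recursion remain negligible under this weaker condition, rather than under $mR_n\le1$, is the delicate point. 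The remaining work — tracking constants through $a_n\sim s^{-1}(m\psi_\q(\gb_n)-1)^{-1}$, $m\psi_\q(\gb_n)-1\sim c_\q(\gb_n-\gb_c)$, and the value of $\kappa_g$ — is routine algebra.
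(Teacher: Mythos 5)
Your proposal follows exactly the paper's route: identify $B_n=-\log\gp_\q(\pi_n(\gb_n))$ with the recursion~\eqref{def:concaverec} for $R_n=\psi_\q(\gb_n)$ via Proposition~\ref{prop:RCM}, note that $g_{\gb_n}$ satisfies~\eqref{eq:encadrement} uniformly near $\gb_c$, then apply Proposition~\ref{prop:asymp} and Theorem~\ref{thm:convergence} in the finite-moment case (resp.\ Theorem~\ref{thm:expect2} in the heavy-tail case), convert via $\pi_n\sim\q^{-1}B_n$, and translate $a_n\sim s^{-1}(m\psi_\q(\gb_n)-1)^{-1}\sim s^{-1}c_\q^{-1}(\gb_n-\gb_c)^{-1}$. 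This is precisely what the paper does in the paragraph preceding the statement, so the approach is the same.

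One small correction is in order: the obstacle you flag as ``the main obstacle'' is not actually an obstacle. You assert that the hypothesis $\limsup_{n\to\infty}mR_n\leq1$ of Theorem~\ref{thm:convergence} ``is not literally satisfied here since $mR_n>1$''. But $mR_n=m\psi_\q(\gb_n)\to1$, so $\limsup_{n\to\infty}mR_n=1$, and the condition $\limsup_{n\to\infty}mR_n\leq1$ \emph{is} literally satisfied — it is a condition on the limit superior, not on each term. The proof in Section~\ref{sec:cvg} handles the case $mR_n\in[1+\tfrac1n,2]$ explicitly (see Claim~\ref{claim:a} and the bound $k_n=(mR_n-1)^{-1}$), and the assumption $n(\gb_n-\gb_c)\to\infty$ guarantees $a_n\to\infty$, so Theorem~\ref{thm:convergence} applies directly without re-derivation. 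The genuine extra step — which you also note — is that $g_{\gb_n}$ now depends on $n$, so Proposition~\ref{prop:asymp} must be used with the uniform version of the condition $\kappa_g=\lim_{x\downarrow0}x^{-q}(x-g(x))$; this works because $\kappa_{g_{\gb_n}}\to\kappa_{g_{\gb_c}}$ and the dominated convergence step in the proof of Proposition~\ref{prop:asymp} is uniform, as the paper indicates. Finally, a tiny slip in the ``routine algebra'': from $a_n\sim s^{-1}(mR_n-1)^{-1}$ one gets $a_n^{-1/s}\sim(s\,c_\q)^{1/s}(\gb_n-\gb_c)^{1/s}$, not $(c_\q/s)^{1/s}(\gb_n-\gb_c)^{1/s}$ as you wrote; worth tracking carefully when reproducing the closed-form constants $\tilde\alpha_\q$.
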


%

Notice that in Theorem~\ref{thm:RCMnearcritical}, one can take $\gb_n\downarrow \gb_c$ arbitrarily slowly.
We can therefore deduce the following critical behavior for the limiting survival probability $\pi(\gb)$ of the random cluster model on a quenched Galton--Watson tree.

\begin{corollary}
\label{cor:RCMcritic}
Recall the definition~\eqref{def:pi} of $\pi(\gb) = \pi(\gb,T)$.
Then, letting $s=1$ if $\q\in(0,2)$ and $s=2$ if $\q=2$, we have the following critical behavior:
\begin{itemize}
\item If $\bE[Z^{1+s}]<+\infty$, then 
$\pi(\gb) \sim  \tilde \alpha_\q W \,  (\gb-\gb_c)^{1/s}$,
with $\tilde \alpha_\q$ as in Theorem~\ref{thm:RCMnearcritical}.

\item If $\bE[(Z\wedge x)^{1+s}] \asymp L(x) x^{1+s-\alpha}$, then 
$\pi(\gb) \asymp  h^{-1}(\gb-\gb_c)$, 
with $h^{-1}(\cdot)$ an asymptotic inverse of $L(1/x) x^{\alpha-1}$ as $x\downarrow 0$.
\end{itemize}

\end{corollary}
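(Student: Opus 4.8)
\medskip
\noindent\emph{Strategy and reductions.} The plan is to deduce the corollary from Theorem~\ref{thm:RCMnearcritical} (and, for the order bounds, Theorems~\ref{thm:expect}--\ref{thm:expect2} and Proposition~\ref{prop:asymp}) by performing the two limits in the order $n\to\infty$ then $\gb\downarrow\gb_c$. Write $R=\psi_\q(\gb)$, so that $\gb>\gb_c$ is equivalent to $mR>1$, and recall that $\pi_n(\gb)=\bar\PP_{\p,\q}^{\bar T_n}(\rho\leftrightarrow\partial T_n)$ is non-increasing in $n$ and non-decreasing in $\gb$, so that $\pi(\gb)=\lim_n\pi_n(\gb)=\inf_n\pi_n(\gb)$. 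Setting $B_n(\gb):=-\log\gp_\q(\pi_n(\gb))$ and $B_\infty(\gb):=-\log\gp_\q(\pi(\gb))$, the sequence $B_n(\gb)$ is non-increasing with limit $B_\infty(\gb)$, and since $-\log\gp_\q(x)\sim\q x$ as $x\downarrow0$ while $\pi(\gb)\to0$ as $\gb\downarrow\gb_c$, it suffices to prove the stated asymptotics for $B_\infty(\gb)$. Finally, as $\gb\mapsto B_\infty(\gb)$, $\gb\mapsto(\gb-\gb_c)^{1/(q-1)}$ and $\gb\mapsto h^{-1}(\gb-\gb_c)$ are all monotone, it is enough to prove the asymptotics along one sequence $\gb_k\downarrow\gb_c$ and then upgrade to the continuous limit by a standard monotonicity/sandwiching argument (which produces a single exceptional null set).

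\medskip
\noindent\emph{Order bounds (full proof of the second bullet, first bullet up to constants).} I would apply Theorems~\ref{thm:expect} and~\ref{thm:expect2} with the \emph{constant} sequence $R_n\equiv R=\psi_\q(\gb)$ --- admissible since $m\psi_\q(\gb)\in(1,2]$ for $\gb$ close enough to $\gb_c$ --- together with Proposition~\ref{prop:boundCB} to replace $C_n$ by $B_n(\gb)$. Since $mR>1$ is fixed, $a_n=\sum_{k=1}^n(mR)^{-ks}\uparrow\big((mR)^s-1\big)^{-1}$ and $\gamma_n=\tilde\gamma_n\asymp h^{-1}(mR-1)$ for all large $n$; using $mR-1=m(\psi_\q(\gb)-\psi_\q(\gb_c))\sim m\psi_\q'(\gb_c)(\gb-\gb_c)$, this gives $\bE[B_n(\gb)]\asymp(\gb-\gb_c)^{1/(q-1)}$ in the finite-moment case, resp.\ $\bE[B_n(\gb)]\asymp h^{-1}(\gb-\gb_c)$ in the heavy-tail case, uniformly over large $n$. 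Letting $n\to\infty$ by monotone convergence yields the corresponding two-sided bound for $\bE[B_\infty(\gb)]$, and the tightness statements of Theorem~\ref{thm:expect2} pass to $B_\infty(\gb)$ via the inclusions $\{B_\infty(\gb)\ge a\}\subseteq\{B_n(\gb)\ge a\}$ and $\{B_\infty(\gb)\le a\}\subseteq\liminf_n\{B_n(\gb)\le a+\delta\}$ together with Fatou's lemma (a uniform-in-$\gb$ version being obtained by the interpolation trick of the next paragraph).

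\medskip
\noindent\emph{Sharp constant (first bullet).} Fix $\gb_k\downarrow\gb_c$ and choose $N_k\to\infty$ with $N_k(\gb_k-\gb_c)\to\infty$, for instance $N_k=\big\lceil(\gb_k-\gb_c)^{-1}\log\tfrac{1}{\gb_k-\gb_c}\big\rceil$. A uniform-in-parameter form of Theorem~\ref{thm:RCMnearcritical} gives $B_{N_k}(\gb_k)=\q\tilde\alpha_\q W(\gb_k-\gb_c)^{1/(q-1)}(1+o(1))$ in $L^q$: if this failed, one could interleave the pairs $(N_k,\gb_k)$ into a bona fide array $(\tilde\gb_n)_{n\ge1}$ with $\tilde\gb_n\downarrow\gb_c$, $n(\tilde\gb_n-\gb_c)\to\infty$ and $\tilde\gb_{N_k}=\gb_k$, contradicting Theorem~\ref{thm:RCMnearcritical}. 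Since $B_\infty(\gb_k)\le B_{N_k}(\gb_k)$, this yields $\limsup_k B_\infty(\gb_k)/(\gb_k-\gb_c)^{1/(q-1)}\le\q\tilde\alpha_\q W$ a.s.; the same interpolation applied to Proposition~\ref{prop:asymp} refines the previous paragraph to $\bE[B_\infty(\gb_k)]\sim\q\tilde\alpha_\q(\gb_k-\gb_c)^{1/(q-1)}=\q\tilde\alpha_\q\bE[W](\gb_k-\gb_c)^{1/(q-1)}$, \emph{provided} the monotone convergence $B_n(\gb)\downarrow B_\infty(\gb)$ is quantified (see below). Granting this, uniform integrability of $\{B_\infty(\gb_k)/(\gb_k-\gb_c)^{1/(q-1)}\}_k$ (from the $L^q$ bounds), combined with the a.s.\ inequality $\limsup_k(\cdot)\le\q\tilde\alpha_\q W$ and the convergence of means to $\q\tilde\alpha_\q\bE[W]$, forces $B_\infty(\gb_k)/(\gb_k-\gb_c)^{1/(q-1)}\to\q\tilde\alpha_\q W$ a.s.\ and in $L^q$. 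Dividing by $\q$ and invoking the monotonicity upgrade concludes.

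\medskip
\noindent\emph{Main obstacle.} The only non-routine ingredient is the quantitative ``forgetting of the boundary condition'': one must show $B_{N_k}(\gb_k)-B_\infty(\gb_k)=o\big((\gb_k-\gb_c)^{1/(q-1)}\big)$ (equivalently, that $\bE[B_n(\gb_k)]-\bE[B_\infty(\gb_k)]$ is negligible for $n\ge N_k$). The mechanism is that just above criticality the recursion~\eqref{def:RCMrec2} contracts towards its attracting fixed configuration at effective rate $\le m\psi_\q(\gb)\,g_\gb'(\text{typical scale})=1-O(\gb-\gb_c)$, so that $n\gg(\gb-\gb_c)^{-1}$ steps suffice; I would prove this using the monotonicity of the recursion, the strict concavity of $g_\gb$ (so $g_\gb'$ is bounded away from $1$ once its argument is bounded away from $0$, cf.~\eqref{eq:expansiong}), and the moment and tightness estimates for $B_n(\gb)$ from the second paragraph, following the scheme of the proofs of Theorems~\ref{thm:expect} and~\ref{thm:convergence}. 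Everything else is bookkeeping with monotonicity, Fatou's lemma and the results cited above.
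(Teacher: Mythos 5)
Your overall route is exactly the one the paper has in mind: use the monotonicity $\pi(\gb)\le\pi_n(\gb)$ together with the fact that Theorem~\ref{thm:RCMnearcritical} applies to sequences $\gb_n\downarrow\gb_c$ going to $\gb_c$ \emph{arbitrarily slowly}, and interleave a double sequence $(\gb_k,N_k)$ into a single admissible array $(\tilde\gb_n)_{n\ge1}$. The reductions, the order bounds via the constant sequence $R_n\equiv\psi_\q(\gb)$ with Theorems~\ref{thm:expect}/\ref{thm:expect2}, and the constant $\q\tilde\alpha_\q$ on the $B$-scale (hence $\tilde\alpha_\q$ after dividing by $\q$) are all correct.

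However, your ``main obstacle'' --- that one must show $B_{N_k}(\gb_k)-B_\infty(\gb_k)=o\big((\gb_k-\gb_c)^{1/(q-1)}\big)$ via a quantitative contraction bound for the recursion --- is a gap you create yourself by freezing $N_k=\lceil(\gb_k-\gb_c)^{-1}\log\tfrac{1}{\gb_k-\gb_c}\rceil$ in advance, and the contraction argument you sketch for it is not carried out. The whole point of the phrase ``arbitrarily slowly'' is that $N_k$ may be chosen \emph{after} $\gb_k$, as large as one wishes. For each fixed $\gb>\gb_c$, $\pi_n(\gb)\downarrow\pi(\gb)$ a.s.\ with $\pi_n(\gb)\in[0,1]$, so $\pi_n(\gb)\to\pi(\gb)$ in $L^q$ by bounded convergence; one may therefore pick $N_k$ with $N_k(\gb_k-\gb_c)\to\infty$ \emph{and} $\|\pi_{N_k}(\gb_k)-\pi(\gb_k)\|_q\le\tfrac{1}{k}(\gb_k-\gb_c)^{1/(q-1)}$ (and, for the heavy-tail bullet, $\le\tfrac{1}{k}h^{-1}(\gb_k-\gb_c)$). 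Feeding this into the interleaving step you already set up then yields both bounds simultaneously --- $\|\pi(\gb_k)/(\gb_k-\gb_c)^{1/(q-1)}-\tilde\alpha_\q W\|_q\to0$, and tightness of $\pi(\gb_k)/h^{-1}(\gb_k-\gb_c)$ in the heavy-tail case --- with no contraction rate, no Scheff\'e-type argument and no ``quantified monotone convergence'' required. A side remark: Theorem~\ref{thm:RCMnearcritical} provides $L^q$ convergence, so the ``a.s.'' inequalities in your sharp-constant paragraph should really be read in $L^q$ (or along a subsequence), which is also the natural mode of convergence for the corollary.
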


Let us stress that Corollary~\ref{cor:RCMcritic} is related to existing results.
For instance, \cite{DomGiaRvdH14} gives the critical behavior of the Ising model (with external field) on a tree-like graph. 
Our techniques are quite different, and we improve the results here by giving a sharp behavior in the case where $\bE[Z^{1+s}] <+\infty$ (with the correct \textit{random} constant); and treating a more general set-up in the heavy-tail case.
Let us also note that~\cite{MPR20} treats the survival probability of Bernoulli percolation on a Galton Watson tree and our Corollary~\ref{cor:RCMcritic} improves their main result in two directions: (i) we recover the first-order asymptotic with a weaker moment condition ($\bE[Z^2]<+\infty$ instead of $\bE[Z^{3+\eta}]<+\infty$); (ii) we treat the case of a GW with heavy tails.

\section{Some useful preliminaries on Branching processes}
\label{sec:preliminaries}

In this section, we regroup some technical tools that will be used throughout the proofs.
Some of them are standard, for instance some $L^q$ inequalities for sums of independent random variables (see Section~\ref{sec:Lqinequalities}), but we recall them for convenience.
Others are very natural, estimating the tail of (truncated) branching processes with heavy-tails (see Section~\ref{sec:truncatedBP}), but we were not able to find them in the literature so we provide a proof in Appendix~\ref{app:tail}.

\subsection{Useful $L^q$ inequalities for sums of independent random variables}
\label{sec:Lqinequalities}

Let us collect here some estimates on the $L^q$-norm of sums of independent random variables, which turn out to be extremely useful in the context of branching processes.

One of the main inequalities that we will use is Lemma~1.4 in~\cite{Liu01}, which mostly relies on the Marcinkiewicz--Zigmund inequality (see e.g.\  \cite[Ch.~10.3]{ChowTeicher}).

\begin{lemma}[Lemma~1.4 in \cite{Liu01}]
\label{lem:q-moment}
Let $q>1$ and let $(X_i)_{i\geq 1}$ be independent and \emph{centered} random variables, with a finite moment of order $q >1$. Then, for any $n\geq 1$, we have
\[
\bE\left[  \Big| \sum_{i=1}^n X_i\Big|^q \right] \leq (A_q)^q  n^{\theta_q-1}  \sum_{i=1}^n \bE\big[ |X_i|^q \big] \,,
\] 
with $\theta_q := \max(1,\frac q2)<q$ and $A_q := 2 \lceil \frac{q}{2} \rceil^{1/2}$ (in particular $A_q=2$ if $q \in (1,2]$).
\end{lemma}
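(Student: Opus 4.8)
The plan is to derive the bound from a symmetrization argument combined with the Khintchine inequality (with an \emph{explicit} constant) and a convexity estimate, tracking constants carefully; this is essentially the standard route to the Marcinkiewicz--Zygmund inequality. Write $S_n := \sum_{i=1}^n X_i$ and $k := \lceil q/2\rceil$, and recall $\theta_q = \max(1,q/2)$.

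First I would symmetrize: letting $(X_i')_i$ be an independent copy of $(X_i)_i$, centeredness gives $S_n = \bE'\big[\sum_i (X_i - X_i') \mid (X_i)_i\big]$, hence $\bE[|S_n|^q] \le \bE\big[\big|\sum_i (X_i - X_i')\big|^q\big]$ by Jensen. Since $Y_i := X_i - X_i'$ is symmetric, $(Y_i)_i \stackrel{d}{=} (\varepsilon_i Y_i)_i$ for an independent Rademacher sequence $(\varepsilon_i)_i$, so $\bE[|S_n|^q] \le \bE_Y \bE_\varepsilon\big[\big|\sum_i \varepsilon_i Y_i\big|^q\big]$. Next, conditionally on $(Y_i)_i$, I would apply the Khintchine inequality in the form $\bE_\varepsilon\big[|\sum_i \varepsilon_i a_i|^q\big] \le k^{q/2}(\sum_i a_i^2)^{q/2}$ for reals $(a_i)$. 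This is where the constant is pinned down: since $q\le 2k$ one has $\|\sum_i\varepsilon_i a_i\|_q \le \|\sum_i\varepsilon_i a_i\|_{2k}$, and for the even integer moment, expanding $(\sum_i\varepsilon_i a_i)^{2k}$ and using that $\bE[\varepsilon_{i_1}\cdots\varepsilon_{i_{2k}}]$ equals $1$ when every index has even multiplicity and $0$ otherwise, one compares term by term with the Gaussian sum $\sum_i g_i a_i \sim \mathcal N(0,\sum_i a_i^2)$: only monomials $\prod_v a_v^{m_v}$ with all $m_v$ even survive (hence are nonnegative), and Wick's formula shows the Gaussian coefficient of each such monomial dominates the Rademacher one. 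Thus $\bE_\varepsilon[(\sum_i\varepsilon_i a_i)^{2k}] \le \bE[(\sum_i g_i a_i)^{2k}] = (2k-1)!!\,(\sum_i a_i^2)^k$, and AM--GM gives $(2k-1)!! = \prod_{j=1}^k (2j-1) \le \big(\tfrac1k\sum_{j=1}^k(2j-1)\big)^k = k^k$, which yields the claimed bound (for $q\le 2$ it degenerates to $\|\cdot\|_q\le\|\cdot\|_2$, i.e. $k=1$).

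It then remains to extract the factor $n^{\theta_q - 1}$ and collect constants. Applying the Khintchine bound conditionally gives $\bE[|S_n|^q] \le k^{q/2}\,\bE\big[(\sum_i Y_i^2)^{q/2}\big]$. If $q\in(1,2]$, subadditivity of $t\mapsto t^{q/2}$ gives $(\sum_i Y_i^2)^{q/2} \le \sum_i |Y_i|^q$ (and $\theta_q-1=0$); if $q>2$, convexity of $t\mapsto t^{q/2}$ gives $(\sum_i Y_i^2)^{q/2} \le n^{q/2-1}\sum_i|Y_i|^q = n^{\theta_q-1}\sum_i |Y_i|^q$. Using the $c_r$-inequality $\bE[|Y_i|^q] \le 2^{q-1}(\bE[|X_i|^q]+\bE[|X_i'|^q]) = 2^q\bE[|X_i|^q]$ then gives $\bE[|S_n|^q] \le (2\sqrt{k})^q\, n^{\theta_q-1}\sum_i \bE[|X_i|^q]$, i.e. the statement with $A_q = 2\lceil q/2\rceil^{1/2}$ (and $A_q=2$ for $q\in(1,2]$).

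The main obstacle is not any single deep step but the bookkeeping: one must avoid losing constants through the chain of reductions, the crucial point being to control the Khintchine constant by $\lceil q/2\rceil^{1/2}$ via the Gaussian comparison and AM--GM, rather than settling for an unspecified constant. As an alternative in the range $q\in(1,2]$ one could invoke directly the von Bahr--Esseen inequality $\bE[|S_n|^q]\le 2\sum_i\bE[|X_i|^q]$ together with $2\le 2^q$, but the symmetrization argument above has the advantage of handling all $q>1$ in one stroke.
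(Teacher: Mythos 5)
Your proof is correct, and all the constants check out: the symmetrization–Khintchine–Gaussian-comparison chain gives exactly $A_q = 2\lceil q/2\rceil^{1/2}$ via $(2k-1)!!\le k^k$, and the split $q\in(1,2]$ versus $q>2$ produces the factor $n^{\theta_q-1}$ as claimed. The paper does not prove this lemma itself but cites it directly from Liu~(2001), noting it ``mostly relies on the Marcinkiewicz--Zygmund inequality''; your argument is precisely that standard route, carried out with the constant bookkeeping made explicit, so it is consistent with the cited source.
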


Let us mention for completeness the following (simpler) result due to Neveu~\cite{Neveu87}, in the case where $q\in (1,2]$.

\begin{lemma}[\cite{Neveu87}]
\label{lem:Neveu}
Let $q\in (1,2]$. For a \emph{non-negative} r.v.\ $X$ with a finite moment of order~$q$, define $V_q(X):=\bE[X^q]-\bE[X]^q$.
Then, if $X,Y$ are independent non-negative r.v.\ with a finite moment of order $q$, we have that $V_q(X+Y) \leq V_q(X)+V_q(Y)$.

In particular, if $(X_i)_{i\geq 1}$ are independent\ \emph{non-negative} random variables with a finite moment of order $q$, then for any $n\geq 1$ we have
\[
\bE\Big[\Big( \sum_{i=1}^n X_i\Big)^q \Big] \leq \bE\Big[\sum_{i=1}^n X_i \Big]^q + \sum_{i=1}^n \bE\big[ X_i^q \big] \,.
\] 
\end{lemma}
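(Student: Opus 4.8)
The plan is to prove the key superadditivity statement $V_q(X+Y)\leq V_q(X)+V_q(Y)$ for independent non-negative $X,Y$ with finite $q$-th moment, $q\in(1,2]$, and then iterate it; the final displayed inequality follows by induction on $n$ together with the observation that for independent non-negative $X_i$ one has $\bE[\sum X_i]^q + \sum \bE[X_i^q] = V_q(\sum X_i) + \bigl(\sum\bE[X_i^q] - \sum V_q(X_i)\bigr) = V_q(\sum X_i) + \sum \bE[X_i]^q$, so iterating $V_q(X+Y)\le V_q(X)+V_q(Y)$ gives $V_q(\sum_{i=1}^n X_i)\le \sum_{i=1}^n V_q(X_i)$, which rearranges to the claim. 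So the whole content is the two-variable inequality.

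To prove $V_q(X+Y)\leq V_q(X)+V_q(Y)$: write $\mu_X=\bE[X]$, $\mu_Y=\bE[Y]$, and expand using independence. Unravelling the definition, the inequality $\bE[(X+Y)^q] - (\mu_X+\mu_Y)^q \leq \bE[X^q]-\mu_X^q + \bE[Y^q]-\mu_Y^q$ is equivalent, after cancelling $\bE[X^q]$ and $\bE[Y^q]$ — which do \emph{not} appear on the left since $q$ need not be an integer — to something that is cleanest to attack via a pointwise/conditioning argument rather than brute expansion. The clean route is: condition on $X$, and show that for every fixed $a\geq 0$ the function $\phi(a):=\bE[(a+Y)^q] - \bE[(a+\mu_Y)^q]$ satisfies $\phi$ is concave in $a$ on $\RR_+$ (indeed $\phi''(a) = q(q-1)\bigl(\bE[(a+Y)^{q-2}] - (a+\mu_Y)^{q-2}\bigr)$, and since $q-2\le 0$ the map $t\mapsto t^{q-2}$ is convex on $\RR_+$, so by Jensen $\bE[(a+Y)^{q-2}]\geq (a+\mu_Y)^{q-2}$, giving $\phi''\leq 0$). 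Hence by Jensen applied to $X$,
\[
\bE[\phi(X)] \leq \phi(\bE[X]) = \bE[(\mu_X+Y)^q] - (\mu_X+\mu_Y)^q.
\]
The left side is $\bE[(X+Y)^q] - \bE[(X+\mu_Y)^q]$. Rearranging, $\bE[(X+Y)^q] - (\mu_X+\mu_Y)^q \leq \bE[(X+\mu_Y)^q] + \bE[(\mu_X+Y)^q] - 2(\mu_X+\mu_Y)^q$. It then remains to check $\bE[(X+\mu_Y)^q] - (\mu_X+\mu_Y)^q \leq \bE[X^q]-\mu_X^q$ (and symmetrically for $Y$), i.e. that $a\mapsto \bE[(X+a)^q]-(\mu_X+a)^q$ is non-increasing in $a\geq 0$; its derivative is $q\bigl(\bE[(X+a)^{q-1}] - (\mu_X+a)^{q-1}\bigr)$, and since $q-1\in(0,1]$ the map $t\mapsto t^{q-1}$ is concave on $\RR_+$, so by Jensen this derivative is $\leq 0$. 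Combining the two displays yields $V_q(X+Y)\leq V_q(X)+V_q(Y)$.

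The main obstacle is simply organizing the convexity/concavity inputs correctly: one needs $t\mapsto t^{q-2}$ convex (for the concavity of $\phi$) \emph{and} $t\mapsto t^{q-1}$ concave (for the monotonicity step), both of which hold precisely because $q\in(1,2]$ — this is where the hypothesis $q\leq 2$ is used, and it is worth flagging that for $q>2$ the argument breaks (the second derivative sign flips). A secondary technical point is justifying differentiation under the expectation in the formulas for $\phi''$ and for the derivative in $a$: this is fine since $X,Y$ have finite $q$-th moment and we are differentiating on $a$ in a neighbourhood of a fixed $a_0>0$ where $(a+Y)^{q-1}$, $(a+Y)^{q-2}$ are dominated by integrable envelopes (e.g. bounding $(a+Y)^{q-1}\leq (a_0+1+Y)^{q-1}\leq C(1+Y^{q-1})$ near $a_0$); at $a=0$ one argues by continuity/monotone limits. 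Finally, after establishing the two-variable bound, the induction giving $\bE[(\sum_{i=1}^n X_i)^q]\leq \bE[\sum X_i]^q + \sum \bE[X_i^q]$ is immediate from the identity relating $V_q$ of the sum to the two stated quantities, so I would dispatch it in one line.
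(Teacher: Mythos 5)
The paper cites Neveu~\cite{Neveu87} for this result and does not include a proof, so there is no in-paper argument to compare against; reviewing your attempt on its own terms, there is a genuine sign error at the central step. You compute $\phi''(a)=q(q-1)\bigl(\bE[(a+Y)^{q-2}]-(a+\mu_Y)^{q-2}\bigr)$ and invoke the convexity of $t\mapsto t^{q-2}$ together with Jensen to deduce $\bE[(a+Y)^{q-2}]\geq(a+\mu_Y)^{q-2}$. But this makes the bracket non-negative, and $q(q-1)>0$, so your own reasoning yields $\phi''\geq 0$: your $\phi$ is \emph{convex}, not concave. Jensen applied to $X$ then gives $\bE[\phi(X)]\geq\phi(\mu_X)$, the reverse of what you write, and the subsequent chain collapses, since the two inequalities you then stack go in opposite directions, so nothing is concluded about $V_q(X+Y)$.

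The repair is to change the subtracted term: take $g(a):=\bE[(a+Y)^q]-a^q$ in place of your $\phi(a)=\bE[(a+Y)^q]-(a+\mu_Y)^q$. Then $g''(a)=q(q-1)\bigl(\bE[(a+Y)^{q-2}]-a^{q-2}\bigr)$, and now the bracket is $\leq 0$ by a \emph{pointwise} argument rather than Jensen: since $Y\geq 0$ we have $a+Y\geq a$, and $t\mapsto t^{q-2}$ is non-increasing on $(0,\infty)$ because $q-2\leq 0$. Hence $g$ is concave, and Jensen on $X$ gives $\bE[(X+Y)^q]-\bE[X^q]=\bE[g(X)]\leq g(\mu_X)=\bE[(\mu_X+Y)^q]-\mu_X^q$. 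The same pointwise concavity applied to $h(y):=(\mu_X+y)^q-y^q$ together with Jensen on $Y$ gives $\bE[(\mu_X+Y)^q]-\bE[Y^q]\leq(\mu_X+\mu_Y)^q-\mu_Y^q$; adding the two displays and rearranging yields $V_q(X+Y)\leq V_q(X)+V_q(Y)$. Your secondary step (monotonicity in $a$ via concavity of $t^{q-1}$) and your reduction from the two-variable bound to the $n$-variable display were both correct, but the former is no longer needed on this route.
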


As a direct consequence of Lemma~\ref{lem:q-moment}, we state the following lemma that will be convenient in the context of branching processes.

\begin{lemma}
\label{lem:q-moment2}
Let $q>1$, let $X$ be a \emph{non-negative} random variable and $N$ be a $\mathbb N$-valued random variable, both with a finite moment of order $q >1$.
If $(X_i)_{i\geq 1}$ are i.i.d.\ random variables with the same distribution as $X$ and independent of $N$, then we have, 
\[
 \Big\| \sum_{i=1}^N X_i \Big\|_q \leq A_q  \big(\|N\|_{\theta_q} \big)^{\frac{\theta_q}{q}} \big( \| X -\bE[X]\|_q\big) + \|N\|_q \bE[X] \,,
\] 
with $\theta_q := \max(1,\frac q2)<q$ and $A_q := 2 \lceil \frac{q}{2} \rceil^{1/2}$ as in Lemma~\ref{lem:q-moment}.
\end{lemma}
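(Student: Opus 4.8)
The plan is to decompose the random sum $\sum_{i=1}^N X_i$ into a centered part and a mean part by writing $X_i = (X_i - \bE[X]) + \bE[X]$, so that
\[
\sum_{i=1}^N X_i = \sum_{i=1}^N (X_i - \bE[X]) + N\,\bE[X]\,,
\]
and then applying the triangle inequality in $L^q$:
\[
\Big\| \sum_{i=1}^N X_i \Big\|_q \leq \Big\| \sum_{i=1}^N (X_i - \bE[X]) \Big\|_q + \bE[X]\, \|N\|_q\,.
\]
The second term is already in the desired form. The work is entirely in bounding the first term, the $L^q$-norm of a random sum of i.i.d.\ centered variables $Y_i := X_i - \bE[X]$.

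First I would condition on $N$: since $(Y_i)_{i\geq1}$ is independent of $N$, write
\[
\bE\Big[ \Big| \sum_{i=1}^N Y_i\Big|^q \Big] = \sum_{k\geq 0} \bP(N=k)\, \bE\Big[ \Big| \sum_{i=1}^k Y_i\Big|^q \Big]\,,
\]
and apply Lemma~\ref{lem:q-moment} to each inner expectation, which gives $\bE[|\sum_{i=1}^k Y_i|^q] \leq (A_q)^q k^{\theta_q - 1} \sum_{i=1}^k \bE[|Y_i|^q] = (A_q)^q k^{\theta_q} \bE[|Y_1|^q]$, using that the $Y_i$ are identically distributed. Summing over $k$ against $\bP(N=k)$ produces $\bE[N^{\theta_q}]$, so
\[
\bE\Big[ \Big| \sum_{i=1}^N Y_i\Big|^q \Big] \leq (A_q)^q\, \bE[N^{\theta_q}]\, \bE\big[ |X - \bE[X]|^q\big]\,.
\]
Taking the $q$-th root gives $\|\sum_{i=1}^N Y_i\|_q \leq A_q\, (\bE[N^{\theta_q}])^{1/q}\, \|X - \bE[X]\|_q = A_q\, (\|N\|_{\theta_q})^{\theta_q/q}\, \|X - \bE[X]\|_q$, which is exactly the first term in the claimed bound. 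Combining with the triangle inequality step above finishes the proof.

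This argument is essentially routine, so there is no serious obstacle; the only points requiring a little care are the bookkeeping that turns $\bE[N^{\theta_q}]$ into $(\|N\|_{\theta_q})^{\theta_q/q}$ after taking $q$-th roots, and checking that the moment hypotheses on $X$ and $N$ (finite moment of order $q$, hence also of order $\theta_q < q$) are enough to justify the conditioning and the interchange of sum and expectation — both are immediate by monotone convergence since all terms are non-negative. One should also note that $\theta_q < q$ guarantees $\|N\|_{\theta_q} \leq \|N\|_q < \infty$, so the right-hand side is finite.
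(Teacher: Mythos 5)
Your proof is correct and follows essentially the same route as the paper's: center the summands, apply the $L^q$ triangle inequality to split off the $N\,\bE[X]$ term, then condition on $N$ and invoke Lemma~\ref{lem:q-moment} to bound the centered random sum, finishing with the observation that $\bE[N^{\theta_q}]^{1/q} = (\|N\|_{\theta_q})^{\theta_q/q}$. There is no substantive difference from the paper's argument.
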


\begin{proof}
First of all, letting $\bar X_i = X_i -\bE[X]$, we have that 
\[
\Big\| \sum_{i=1}^N X_i \Big\|_q = \Big\| \sum_{i=1}^N \bar X_i  + N \bE[X]\Big\|_q \leq \Big\| \sum_{i=1}^N \bar X_i \Big\|_q  + \|N\|_q \bE[X] \,.
\]
Now, using Lemma~\ref{lem:q-moment} conditionally on $N$, we have that
\[
\bE\Big[ \Big(\sum_{i=1}^N \bar X_i\Big)^q  \Big] \leq   (A_q)^q  \bE\big[ N^{\theta_q}  \big] \bE\big[ |\bar X|^q \big] \,,
\]
so that 
$\big\| \sum_{i=1}^N \bar X_i \big\|_q \leq A_q \bE[ N^{\theta_q}]^{1/q} \|X-\bE[X]\|_q \,.
$
This concludes the proof.
\end{proof}

As a consequence of Lemma~\ref{lem:q-moment2}, we get the following bound for branching processes.
Let $(Z_k)_{k\geq 0}$ be a branching process with offspring distribution $Z\sim \mu$ which admits a moment of order $q>1$, and mean denoted by $m:=\bE[Z]$.
Denote $W_k := \frac{1}{m^k} Z_k$ the usual martingale and let $W:=\lim_{k\to\infty} W_k$ a.s., which is in $L^q$.
Then, there are two constants $c_1(q):=A_q \bE[W^{\theta_q}]^{1/q}$ and $c_2:=c_1+\|W\|_q$, such that the following holds: 
for any $k\geq 1$, if $(X_i)_{i\geq 0}$ are non-negative i.i.d.\ random variables independent of $Z_k$ (with common distribution $X$) with a finite $q$-moment, 
\begin{equation}
\label{eq:boundLqBP}
\Big\|\frac{1}{m^k} \sum_{i=1}^{Z_k} X_i \Big\|_q \leq c_1 m^{-k(1-\frac{\theta_q}{q})} \|X\|_q + c_2 \bE[X] \,.
\end{equation}
Indeed, a simple application of Lemma~\ref{lem:q-moment2} gives that
\[
\Big\|\frac{1}{m^k} \sum_{i=1}^{Z_k} X_i \Big\|_q \leq A_q m^{-k} \big(m^{k}\|W_k\|_{\theta_q} \big)^{\frac{\theta_q}{q}} \|X -\bE[X]\|_q + \|W_k\|_p\bE[X] \,.
\]
Using that $\|X -\bE[X]\|_q\leq \|X\|_q+\bE[X]$, this directly concludes~\eqref{eq:boundLqBP}, since $\sup_k\|W_k\|_{\alpha} = \|W\|_{\alpha}$ for any $\alpha\in[1,q]$ ($(W_k^{\alpha})_{k\geq 0}$ is a submartingale), and also $m^{-k(1-\frac{1}{q}\theta_q)} \leq 1$ since we have $\theta_q<q$.

\subsection{Supercritical branching processes with (truncated) heavy tails}
\label{sec:truncatedBP}

Let $\mu$ be the reproduction law of a super-critical branching process, let $Z\sim \mu$ and note $m:= \bE[Z]$ its mean.
For $t>1$ some (large) fixed parameter, we define $\tilde{Z}:= Z\wedge t$ with distribution denoted~$\tilde{\mu}$ and assume that $t$ is large enough so that $\tilde{m}:=\bE[\tilde Z] >1$. 
We define a \textit{truncated} branching process with reproduction law $\tilde{\mu} $.
As above, let $(Z_k)_{k\geq 0}$, resp.~$\tilde Z_k$, be a branching process with offspring distribution $Z\sim \mu$, resp.~$\tilde Z\sim \tilde \mu$, and denote $W_{k}:= \frac{1}{m^{k}} Z_k$ and $\tilde W_{k}:= \frac{1}{\tilde m^{k}} \tilde Z_k$ the corresponding martingales.

Notice that if $Z$ verifies~\eqref{hyp:upper}, then we have $\bE[(\tilde Z)^q] \leq c_2 L(t) t^{q-\alpha}$.
We will actually work with the following bound on the tail of $Z$ and $\tilde{Z}$ (which are equivalent to~\eqref{hyp:upper} if $\alpha < q$): for any $x \geqslant 1$
\begin{equation}
\label{eq:righttail}
\bP(Z>x) \leq c_2 L(x) x^{-\alpha},
\qquad
\bP(\tilde Z >x) \leq c_2 L(x) x^{-\alpha} \ind_{\{ x <t\}} \,. 
\end{equation}

\noindent 
We then have the following result, which is a variation of Theorem~1 (or Corollary~12) in~\cite{DKW13}.

\begin{proposition}
\label{prop:tailGW}
Assume that~\eqref{eq:righttail} holds for the offspring distribution. Then, there are constants $c,c'$ such that, uniformly in $\ell\geq 1$ 
\[
\bP(W_{\ell} > x) \leq c L(x) x^{-\alpha} \,,
\]
and, provided that~$t$ is large enough, 
\[
\bP(\tilde W_{\ell} > x) \leq 
\begin{cases}
c L(x) x^{-\alpha}  & \  \text{ for all  } x \geq 1 \,,\\
t^{- c' x/t} & \  \text{ for all } x\geq  t \,.
\end{cases}
\]
\end{proposition}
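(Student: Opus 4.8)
\textbf{Proof strategy for Proposition~\ref{prop:tailGW}.}
The plan is to follow the approach of~\cite{DKW13} for tail bounds of supercritical branching processes, adapted to incorporate the truncation. The key structural input is the many-to-one / recursive decomposition of $Z_\ell$: writing $Z_\ell = \sum_{i=1}^{Z_1} Z_{\ell-1}^{(i)}$ with the $Z_{\ell-1}^{(i)}$ i.i.d.\ copies of $Z_{\ell-1}$ and independent of $Z_1 \sim \mu$, and dividing by $m^\ell$, we get $W_\ell = \frac{1}{m}\sum_{i=1}^{Z_1} W_{\ell-1}^{(i)}$. One then runs an induction on $\ell$ to propagate a tail bound of the form $\bP(W_\ell > x) \le c L(x) x^{-\alpha}$ with a constant $c$ uniform in $\ell$. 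The base case $\ell=0$ ($W_0=1$) is trivial (or one starts at $\ell=1$ where $W_1 = Z_1/m$ and~\eqref{eq:righttail} is exactly the hypothesis up to the constant). For the inductive step, one splits the event $\{W_\ell > x\}$ according to whether the sum is dominated by one large summand (a ``single big jump'' of $W_{\ell-1}$, or equivalently a large value of $Z_1$) or results from the accumulation of many moderate summands; the latter is controlled by a Markov / Chernoff-type bound using that $\bE[W_{\ell-1}]=1$ and a uniform-in-$\ell$ control on a truncated exponential moment of $W_{\ell-1}$, while the former is handled directly by the induction hypothesis and the regular variation of $L$. The slowly varying function is carried along using Potter's bounds~\cite[Thm.~1.5.6]{BGT89} to absorb the ratios $L(x/k)/L(x)$ and $L(cx)/L(x)$ into the constant.

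\textbf{The truncated process.} For $\tilde W_\ell$ one runs the \emph{same} induction, using now $\tilde W_\ell = \frac{1}{\tilde m}\sum_{i=1}^{\tilde Z_1} \tilde W_{\ell-1}^{(i)}$ with $\tilde Z_1 = Z_1 \wedge t$ and $\tilde m = \bE[Z\wedge t] > 1$ (which holds for $t$ large). The polynomial regime $x \ge 1$ is obtained exactly as above: the truncated tail in~\eqref{eq:righttail} is bounded by the untruncated one, so the single-big-jump term is still $\le c L(x)x^{-\alpha}$, and the ``many moderate summands'' term is, if anything, easier to control since $\tilde Z_1 \le t$. For the exponential regime $x \ge t$, the point is that $\tilde Z_1 \le t$ deterministically, so $\tilde Z_\ell \le t \cdot \tilde Z_{\ell-1}$; iterating, a vertex at generation $\ell$ can only be reached if every one of its ancestors had an (un-truncated but then clipped) offspring count, and the total population $\tilde Z_\ell$ exceeding level $x m^\ell$ — equivalently a \emph{macroscopically large deviation} of the bounded-increment branching process — costs exponentially. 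Concretely: since each individual has at most $t$ children, to have $\tilde Z_\ell \ge \tilde m^\ell x$ one needs, by a union bound over generations or by a supermartingale (Chernoff) argument applied to $\prod (\text{something})$, roughly $x/t$ ``successful'' independent events each of probability bounded away from $1$, yielding a bound of the form $t^{-c'x/t}$. The cleanest implementation is to bound the exponential moment $\bE[e^{\lambda \tilde W_\ell}]$ uniformly in $\ell$ for $\lambda$ up to order $1/t$ (this is where boundedness of increments by $t$ enters: $\bE[e^{\lambda \tilde Z_1 / \tilde m}] \le 1 + C\lambda$ for $\lambda \lesssim 1/t$, which closes a recursion $\phi_\ell(\lambda) \le \phi_{\ell-1}(C\lambda)$ type inequality), and then apply Markov: $\bP(\tilde W_\ell > x) \le e^{-\lambda x}\bE[e^{\lambda \tilde W_\ell}] \le e^{-c x/t}$, which is $t^{-c'x/t}$ up to renaming constants.

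\textbf{Main obstacle.} The delicate point is obtaining the \emph{uniformity in $\ell$} of the constant $c$ in the polynomial bound (and of the exponential-moment bound in the truncated case). In the induction step for the polynomial regime, the recursion for the constant reads schematically $c_\ell \le m\,\bE[Z_1] \cdot (\text{base tail const}) + (\text{error from accumulation})\cdot c_{\ell-1} \cdot \bE[Z_1^{\text{something}}]/m^{\text{something}}$, and one must check that the contraction factor multiplying $c_{\ell-1}$ is strictly less than $1$ so that $c_\ell$ stays bounded; this requires a careful choice of the threshold separating ``one big summand'' from ``many moderate ones'' (typically at level $\epsilon x$ with $\epsilon$ small but fixed), together with an $L^q$ or truncated-moment estimate on $W_{\ell-1}$ — precisely the kind of bound provided by Lemma~\ref{lem:q-moment2} and~\eqref{eq:boundLqBP} from Section~\ref{sec:Lqinequalities} — to control the contribution of the moderate summands. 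The slowly varying corrections must be handled so that the Potter-bound losses $x^{\pm\delta}$ do not accumulate over the $\ell$ steps; this is done by noting that the induction is really on the tail bound at each \emph{fixed} $x$ simultaneously for all $\ell$, rather than iterating $\ell$ times for a single $x$, so there is in fact only a bounded number of Potter applications. Since this is stated as ``a variation of Theorem~1 (or Corollary~12) in~\cite{DKW13}'', I expect the write-up to mostly cite that paper for the polynomial bound on $W_\ell$ and to spend its effort on the genuinely new truncated exponential tail $t^{-c'x/t}$, for which the bounded-increments exponential-moment argument sketched above is the natural route.
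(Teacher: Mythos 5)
You correctly identify the main tools (one‑big‑jump bounds, Chernoff via a truncated exponential moment, Potter's bounds) and the main difficulty (uniformity in $\ell$), but the decomposition you propose does not close the induction, and this is a genuine gap. You write $W_\ell = \frac1m\sum_{i=1}^{Z_1}W_{\ell-1}^{(i)}$, i.e.\ you decompose at the \emph{root}; the paper instead decomposes at the \emph{last} generation, writing $W_{\ell+1} = \frac{1}{m^{\ell}}\sum_{i=1}^{Z_{\ell}} W_1^{(i)}$ and feeding this into a general lemma (Lemma~\ref{lem:tail}) about random sums $\frac1\mu\sum_{i=1}^N X_i$ with $\mu=\bE[N]$. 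The reason the choice matters: in that lemma the error terms carry the factors $\mu^{-\frac12(\alpha-1)}$ and $\mu^{-\alpha/2}$, and with $\mu = m^{\ell}$ these are $m^{-c\ell}$, so the constants $\kappa_\ell$ in $\bP(W_\ell > c_\ell x)\le \kappa_\ell L(x)x^{-\alpha}$ satisfy $\kappa_{\ell+1}\le \kappa_\ell + O(m^{-c\ell})$ and stay bounded. With your root decomposition, $\mu = m$ is fixed at every step, so the ``many moderate summands'' term contributes of order $\bE[Z_1]\cdot c_{\ell-1} L(x)x^{-\alpha}$, giving a recursion $c_\ell \gtrsim (\mathrm{const}\cdot m^{2-\alpha})\,c_{\ell-1}$ whose multiplicative factor is not $<1$ (and is typically $>1$ for $\alpha\in(1,2)$, which is precisely the heavy‑tail regime of interest here); the constant blows up in $\ell$, exactly the failure you anticipated but did not resolve.

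The same issue recurs in your Chernoff argument for $\tilde W_\ell$. Your root decomposition gives a recursion of the form $\phi_\ell(\lambda)\le\phi_{\ell-1}(C\lambda)$ with $C>1$ (this is forced: $\bE[\tilde Z_1/\tilde m]=1$ makes the linear term contribute a factor $\ge 1$, and the second‑moment correction pushes it strictly above), so $C^{\ell}\lambda$ eventually leaves the domain of control and the bound cannot be uniform in $\ell$. The paper's Lemma~\ref{lem:tailtrunc}, applied with $N=\tilde Z_\ell$ and $\mu = \tilde m^{\ell}$, gives $\bE\big[e^{\lambda\tilde W_{\ell+1}}\big]\le\bE\big[e^{(1+\tilde m^{-\ell}\gep_\lambda)\lambda\,\tilde W_\ell}\big]$, and the crucial point is again the factor $\tilde m^{-\ell}$: the product of amplifications $\prod_{j\ge1}(1+\tilde m^{-j})$ converges, so $\lambda_\ell$ stays below $c_\alpha\frac{\log t}{t}$ uniformly in $\ell$, which is what makes the final bound $\bP(\tilde W_\ell>x)\le t^{-c'x/t}$ hold for all $\ell$. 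So the missing idea, in both halves, is: decompose the martingale over the last generation rather than the first, so that the effective sample size $\mu$ grows geometrically and the per‑generation errors become summable.
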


Let us mention that in~\cite{DKW13} the authors provide a uniform upper and lower bound on the tail probability $\bP(W_{\ell} > x)$, but with a slightly stronger assumption that the tail is \textit{dominated varying}, \textit{i.e.}\ $\bP(Z>x/2)/\bP(Z>x)$ remains bounded (it is for instance implied if we assume the counterpart lower bound $\bP(Z>x) \geq c_1' L(x) x^{-\alpha}$); let us stress that the proof of Lemma~11-Corollary~12 in~\cite{DKW13} actually only requires that the tail is upper bounded by a dominated varying function.
On the other hand, the case of a \textit{truncated} branching process does not seem to have been treated in the literature.
We give full (self-contained) proof of both upper bounds in Proposition~\ref{prop:tailGW} in Appendix~\ref{app:tail}.

\section{Estimates on moments of $C_n$ in the case $\bE[Z^q]<+\infty$}
\label{sec:est-finite}

\subsection{Proof of Theorem~\ref{thm:expect}}
\label{ssec:proof-thmexpect}

We decompose the proof of Theorem~\ref{thm:expect} into an upper bound, which requires only that $Z\sim \mu$ admits a finite first moment, and a lower bound, which requires a finite moment of order $q$ for $Z$. 

For $u\notin \partial T_n$, define $\phi(u) = R_u \cC_p(u \leftrightarrow \partial T_n(u))$, where we remind that $\cC_p(u \leftrightarrow \partial T_n(u))$ is the effective $p$-conductance on the subtree $T_n(u)$, equipped with the original resistances~$(R_u)_{u \in T_n}$.
We then have the following recursion: 
\begin{equation}
\label{eq:recconductance}
\phi(u) =  \sum_{v\leftarrow u} \frac{R_u}{R_v} \frac{\phi(v)}{(1+ \phi(v)^s)^{1/s}} \,.
\end{equation}

\subsubsection{Upper bound on $\bE[C_n]$}
\label{sec:upperECn}

We start from the recursion~\eqref{eq:recconductance}. We proceed as in~\cite[\S6.2.1]{AVB23} (see also~\cite[Lem.~3.1]{ChenHuLin} in the case $p=q=2$). 
Using the branching property and the fact that $R_u = R^{-|u|}$, we get that for any $u\notin \partial T_n$ and $v\leftarrow u$,
\[
\bE[\phi(u)]  = mR_n\, \bE\left[  \frac{\phi(v)}{(1+ \phi(v)^s)^{1/s}}\right] \leq mR_n \, \frac{\bE[\phi(v)]}{(1+ \bE[\phi(v)]^s)^{1/s}} \,,
\]
where we have used that the function $x \mapsto \frac{x}{(1+x^s)^{1/s}}$ is concave for the last inequality.
Denoting $w_{k} :=\bE[\phi(u)]^{-s}$ if $|u|=k$, we have the following recursive inequality:
\[
w_k \geq (mR_n)^{-s} (1+w_{k+1}).
\]
Iterating, we finally get that
\[
w_0 := \bE[C_n]^{-s} \geq \sum_{k=1}^{n} (mR_n)^{-ks} + (mR_n)^{-n s} w_n \,,
\]
Recalling that we have defined $a_n:=\sum_{k=1}^{n} (mR_n)^{-ks}$, we get that $\bE[C_n]^{-s} \geq a_n$, that is $\bE[C_n] \leq a_n^{-1/s}$.

In particular, using Markov's inequality, we have $\bP( a_n^{1/s} C_n \geq K) \leq K^{-1}$, showing the first part of the tightness of $( a_n^{1/s} C_n )_{n\geq 0}$.

\subsubsection{Lower bound on $C_n$, $\bE[C_n]$}
\label{sec:lowerEC}

For the lower bound on $C_n = \cC_p(\rho \leftrightarrow \partial T_n)$, we actually prove an upper bound for the $p$-resistance $\cR_p(\rho \leftrightarrow \partial T_n)$;
we proceed as in \cite[\S6.2.2]{AVB23}.
Using Thomson's principle~\eqref{def:thomson}, we obtain an upper bound on the $p$-resistance simply by computing the energy of a well-chosen unit flow $\theta$ from~$\rho$ to $\partial T_n$.
The uniform flow $\hat \theta$ on $T$ is a natural choice (as in~\cite[Lem.~2.2]{PemPer95}, see also \cite[Lem.~3.3]{ChenHuLin}):
\[
\hat \theta(u,v) = \frac{Z_n(v)}{Z_n} \,, 
\]
where $Z_n(v)$ is the number of descendants of $v$ at generation $n$ and $Z_n:= Z_n(\rho)$.
Then, with this unit flow  $\hat \theta (u,v)$ from the root to the leaves of $T_n$, we have
\[
\cR_p(\rho \leftrightarrow \partial T_n)^s \leq  \cE_p(\hat \theta) = \sum_{k=1}^{n} \sum_{|v|=k} (R_v)^{s} \Big( \frac{Z_n(v)}{Z_n}\Big)^q  
\]
Defining $W_n(v):= \frac{Z_n(v)}{\bE[Z_n(v)]} = m^{-(n-k)} Z_n(v) $ for $|v|=k$ and $W_n :=  W_n(\rho)$, using that $R_u=R^{-|u|}$ and recalling that $s=q-1$, we can rewrite the upper bound as
\begin{equation}
\label{eq:upperResist}
\cR_p(\rho \leftrightarrow \partial T_n)^{s} \leq  \frac{1}{(W_n)^q} \sum_{k=1}^{n} (mR_n)^{-ks}  \frac{1}{m^k} \sum_{|v|=k} W_n(v)^q \,.
\end{equation}
Now, for any $\gep>0$, we can bound 
\begin{multline*}
\bP\Big( \cR_p(\rho \leftrightarrow \partial T_n)^s   \geq \gep^{-1} a_n \Big)  \\
 \leq \bP\big( W_n \leq \gep^{1/2q} \big) + \bP\bigg(  \sum_{k=1}^{n} (mR_n)^{-ks}  \frac{1}{m^k} \sum_{|v|=k} W_n(v)^q \geq \gep^{-1/2} a_n \bigg) \,.
\end{multline*}
Now, for the first term, since the martingale $W_n$ converges a.s.\ to some non-degenerate random variable $W$ with $\bP(W >0)=1$, we get that $\bP( W_n \leq \gep^{1/2q} ) \leq \delta_{\gep}$ uniformly in~$n$, for some $\delta_{\gep} \downarrow 0$ as $\gep\downarrow 0$.
For the second probability, using Markov's inequality (together with the branching property), we get that
\[
\bP\bigg(  \sum_{k=1}^{n} (mR_n)^{-ks}  \frac{1}{m^k} \sum_{|v|=k} W_n(v)^q \geq \gep^{-1/2}  a_n \bigg) \leq \frac{\gep^{1/2}}{a_n} \sum_{k=1}^{n} (mR_n)^{-ks} \bE\big[ (W_{n-k})^q \big] \,.
\]
Assuming that $Z$ admits a finite moment of order $q$, we get that $(W_n)_{n\geq 1}$ is bounded in~$L^q$ (see~\cite{BinghamDoney}), so that there is a constant $c_q$ such that $\bE[ (W_{\ell})^q ] \leq c_q$ uniformly in $\ell\geq 0$.
This shows that the last probability is bounded by $c_q \gep^{1/2}$, recalling the definition of $a_n$.

All together, and recalling that $C_n := \cC_p(\rho \leftrightarrow \partial T_n) =  \cR_p(\rho \leftrightarrow \partial T_n)^{-1}$, we have obtained that for any $\gep >0$,
\begin{equation}
\label{eq:boundprobabCn}
\bP\big( C_n   \leq \gep^{1/s} (a_n)^{-1/s} \big) \leq \delta_{\gep} + c_q \gep^{1/2} \,,
\end{equation}
which shows the scond part of the tightness of $(a_n^{1/s} C_n)_{n\geq 0}$.

For a lower bound on $\bE[C_n]$, choose $\gep := \gep_q >0$ sufficiently small so that $\delta_{\gep} + c_q \gep^{1/2} \leq 1/2$:
we end up with
\[
\bE[C_n] \geq \frac12  \gep_q^{1/s} (a_n)^{-1/s} \,,
\]
which is the desired lower bound.

\begin{remark}
\label{rem:generalbound}
More generally, the proof shows that, for any $u \in T_n$ with $|u|=k$,
\[
c_p (a_{n-k})^{-1/s} \leq  \gp_k:=\bE[ \phi(u)] \leq  (a_{n-k})^{-1/s} \,.
\]
From this, we get that there exist constants $c_1, c_2$ such that for any $ 1\leq k\leq n$,
\[
 c_1 \leq \frac{\gp_{k}}{\gp_{k-1}} \leq c_2 \,.
\]
Indeed, we have
$a_{n-k} \leq a_{n-k+1} = (mR_n)^{-s} + (mR_n)^{-s} a_{n-k}  \leq (1+ (mR_n)^{-s}) a_{n-k}$, so that 
$c_p \leq \gp_k/\gp_{k-1} \leq c_p^{-1} (1+(mR_n)^{-s})^{1/s}$; for this, we need to assume that $\inf_{n\geq 0} R_n >0$.
\end{remark}

\subsection{Control of the higher moments of $C_n$}
\label{sec:moments}

We now give a technical result that control the moments of $C_n$, which is useful for the sequel.

\begin{proposition}
\label{prop:moments}
Let $p>1$ and let $q=\frac{p}{p-1}$ be its conjugate exponent.
If $Z\sim \mu$ admits a moment of order $r\geq q$, then so does~$C_n$. 
Additionally, if $\sup_n m R_n < m^{\max(\frac{r-1}{r},\frac12)}$ and if $\inf_n R_n>0$, then for any $r' \in (1,r]$, we have
\[
 \sup_{n\geq 1} \frac{ \bE\big[  (C_n)^{r'}\big] }{\bE[C_n]^{r'} } <+\infty \,.
\]
\end{proposition}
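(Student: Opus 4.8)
The first statement is immediate: since $g(x)=x/(1+x^s)^{1/s}<1$ for all $x\ge 0$, the recursion~\eqref{def:conductancerec} at the root (where $R_u/R_v=R_n$ for $v\leftarrow\rho$) gives $C_n<R_n Z_1$ with $Z_1\sim\mu$ the number of children of the root, hence $\bE[C_n^r]\le R_n^r\bE[Z^r]<\infty$. For the second statement, $\|C_n\|_{r'}\le\|C_n\|_r$ on a probability space, so it suffices to prove $\bE[C_n^r]\le K\,\bE[C_n]^r$ for some $K$ uniform in $n$ (then $\bE[C_n^{r'}]/\bE[C_n]^{r'}\le K^{r'/r}$). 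It is convenient to work inside the whole triangular array: for $0\le h\le n$ let $C^{(h)}$ denote the conductance of a subtree of depth $h$ equipped with the resistances $R_v=R_n^{-|v|}$, so that $C^{(0)}\equiv 1$, $C_n=C^{(n)}$, and $C^{(h)}\overset{\mathrm d}{=}R_n\sum_{i=1}^{Z}g(C^{(h-1)}_i)$ with $(C^{(h-1)}_i)_i$ i.i.d.\ copies of $C^{(h-1)}$ independent of $Z\sim\mu$ (this is exactly the quantity $\phi(u)$ of~\eqref{eq:recconductance} for $|u|=n-h$). Put $e_h:=\bE[C^{(h)}]$ and $D_h:=\|C^{(h)}\|_r$; by Theorem~\ref{thm:expect} and Remark~\ref{rem:generalbound} we have $e_h\asymp a_h^{-1/s}$, the ratios $\rho_h:=e_h/e_{h-1}$ are bounded above and below (here $\inf_n R_n>0$ is used), and $e_h=mR_n\,\bE[g(C^{(h-1)})]$. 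The goal is $\sup_n\sup_{0\le h\le n}D_h/e_h<\infty$.

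The core is an induction on $h$ with hypothesis $D_{h-1}\le K e_{h-1}$, for a constant $K$ chosen large (depending only on $\mu$, $r$, $\sup_n mR_n$, $\inf_n R_n$). Apply Lemma~\ref{lem:q-moment2} with exponent $r$ (so $\theta_r=\max(1,r/2)$, $A_r=2\lceil r/2\rceil^{1/2}$, and $(\|Z\|_{\theta_r})^{\theta_r/r}=\bE[Z^{\theta_r}]^{1/r}$) to $X=g(C^{(h-1)})$, $N=Z$, and multiply by $R_n$; using $R_n\|Z\|_r\bE[g(C^{(h-1)})]=\tfrac1m\|Z\|_r e_h$,
\[
D_h\ \le\ R_n A_r\,\bE[Z^{\theta_r}]^{1/r}\,\| g(C^{(h-1)})-\bE[g(C^{(h-1)})]\|_r\ +\ \tfrac1m\|Z\|_r\,e_h\,.
\]
Since $0\le g(C^{(h-1)})\le C^{(h-1)}$ the fluctuation term is $\le D_{h-1}+e_{h-1}\le(K+1)e_{h-1}$. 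Moreover, $(1+y)^{-1/s}\ge 1-y/s$ gives $0\le x-g(x)\le\tfrac1s x^{q}$, so (as $q\le r$) $\bE[g(C^{(h-1)})]\ge e_{h-1}-\tfrac1s\bE[(C^{(h-1)})^q]\ge e_{h-1}\big(1-\tfrac{1}{s}K^{q}e_{h-1}^{\,s}\big)$, i.e.\ $\rho_h\ge mR_n\big(1-\tfrac1s K^q e_{h-1}^{\,s}\big)$. Dividing the display by $e_h=\rho_h e_{h-1}$,
\[
\frac{D_h}{e_h}\ \le\ \frac{R_n A_r\,\bE[Z^{\theta_r}]^{1/r}}{\rho_h}\,(K+1)\ +\ \frac{\|Z\|_r}{m}\,,
\]
and the induction closes as soon as the ``per-level amplification'' $R_n A_r\bE[Z^{\theta_r}]^{1/r}/\rho_h$ stays below some $\lambda<1$ uniformly; since $\rho_h\asymp mR_n$ once $e_{h-1}$ is small, and since one must iterate the estimate over a fixed number of levels to gain a genuine contraction, the effective condition is that the factor $R_n\,m^{\theta_r/r}$ be suitably subdominant, with exponent $\theta_r/r=\max(1/r,1/2)$; compared with the mean growth $mR_n$ this is what produces the threshold $\sup_n mR_n<m^{\max(\frac{r-1}{r},\frac12)}$.

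The main obstacle is making this last step uniform: the bound $\rho_h\ge mR_n(1-\tfrac1s K^q e_{h-1}^{\,s})$ is only useful once $e_{h-1}$ is small (below a level depending on $K$), whereas $K$ is itself constrained by the induction, so a naive fixed-point choice of $K$ fails, and one must separate regimes. When $mR_n$ is bounded away from $1$ from below, $C_n$ is dominated by the solution $R_n^{\,n}Z_n$ of the linearised recursion ($g(x)\le x$), and directly $\bE[(R_n^n Z_n)^r]=(mR_n)^{nr}\bE[W_n^r]\le(mR_n)^{nr}\|W\|_r^r\asymp e_n^r$ (using $\sup_n\|W_n\|_r=\|W\|_r<\infty$). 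The finitely many levels $h\le h_0$ are handled by the crude bounds $D_h<R_n\|Z\|_r$ and $e_h\ge c_p a_h^{-1/s}$, the latter bounded below on a fixed range of $h$ and over the admissible range of $R_n$. What remains is the near-critical window $mR_n\in[1-\delta,\,m^{\max(\frac{r-1}{r},\frac12)})$: there $e_h$ is only polynomially small, $\rho_h$ is close to $mR_n$, and iterating the one-step estimate over a block of $j$ fixed levels replaces the per-level factor $R_n m^{\theta_r/r}$ by $(R_n m^{\theta_r/r})^{j}$ and the mean factor by $(mR_n)^{j}$, so that under the hypothesis the block map is a strict contraction for $j$ large; this closes the induction and yields the claimed uniform bound on $\bE[C_n^{r'}]/\bE[C_n]^{r'}$.
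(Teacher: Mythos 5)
There is a genuine gap in the block-iteration step. Your one-level inequality
\[
D_h \ \le\ R_n A_r\,\bE[Z^{\theta_r}]^{1/r}\,\big\| g(C^{(h-1)})-\bE g(C^{(h-1)})\big\|_r \ +\ \tfrac1m\|Z\|_r\,e_h
\]
has as amplification factor the full constant $a:=R_n A_r\,\bE[Z^{\theta_r}]^{1/r}$, not $R_n m^{\theta_r/r}$.
Iterating this one-step bound $j$ times produces $a^{\,j}=\big(R_n A_r\,\bE[Z^{\theta_r}]^{1/r}\big)^{j}$, in which \emph{both} $A_r>1$ and $\bE[Z^{\theta_r}]^{1/r}\ge m^{\theta_r/r}$ (by Jensen) are raised to the $j$-th power; there is no reason for $a^j$ to be small even when $(R_n m^{\theta_r/r})^{j}$ is. Concretely, in the near-critical window $mR_n\approx 1$ one has $a\approx m^{-1}A_r\,\bE[Z^{\theta_r}]^{1/r}$, which exceeds $1$ for modest $m$ (e.g.\ $p=q=r=2$, $A_2=2$, $a\approx 2m^{-1/2}>1$ whenever $m<4$), so the ``block of $j$ levels'' you invoke is not a contraction and your induction does not close. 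Your claim that iteration ``replaces the per-level factor $R_n m^{\theta_r/r}$ by $(R_n m^{\theta_r/r})^j$'' quietly drops the constants, which is exactly where the argument fails.

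The paper avoids this by applying Lemma~\ref{lem:q-moment2} (in the form of~\eqref{eq:boundLqBP}) \emph{once} across a whole block, i.e.\ with $N=Z_k$ rather than $N=Z$ iterated $k$ times. Starting from $\phi(u)\le R_n\sum_{v\leftarrow u}\phi(v)$ and iterating only this \emph{linear} bound to reach $C_n\le R_n^k\sum_{i=1}^{Z_k}\phi_k^{(i)}$, one Marcinkiewicz--Zygmund application yields a fluctuation term $\lesssim (R_n m^{\theta_r/r})^k\,A_r\,\|W\|_{\theta_r}^{\theta_r/r}\,\|\phi_k\|_r$, in which $A_r$ and $\|W\|_{\theta_r}^{\theta_r/r}$ appear only once while the geometric factor $(R_n m^{\theta_r/r})^k$ comes from $\|Z_k\|_{\theta_r}\asymp m^k$. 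Taking $k=k_r$ fixed and large enough then gives a genuine contraction under $\sup_n R_n m^{\theta_r/r}<1$, and the mean contributions are absorbed by the bounds on $\gp_k/\gp_0$ from Remark~\ref{rem:generalbound}, handled by separating the cases $mR_n\lessgtr 1$. Your first claim (finiteness of $\bE[C_n^r]$ from $g\le 1$), your Jensen reduction to $r'=r$, and your treatment of the regime $mR_n$ bounded below $1$ by the linearisation $C_n\le R_n^n Z_n$ are all fine; the missing piece is replacing the iterated one-step bound by a single $k$-generation application of the $L^r$ inequality with $N=Z_k$.
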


The proof would work similarly as in~\cite[Lem.~3.2]{ChenHuLin} if $r$ were an integer, but we aim at generalizing their result here (both to non-integer moments $r$, and to a $p$-conductance with $p\neq 2$).
We take inspiration from~\cite[Prop.~1.3]{Liu01}, which deals with the rate of convergence in~$L^q$ for the usual martingale $W_n$.

\begin{proof}
First of all, let us notice that we only have to prove the claim for $r'=r$, since then we can apply Jensen's inequality to get that $\bE[C_n^{r'}]\leq \bE[C_n^r]^{r'/r}$.
Similarly, we only need to control the $L^r$ norm of $C_n$, and show that 
\begin{equation}
\label{boundLq}
\|C_n\|_r \leq c_r \bE[C_n] \,,
\end{equation}
for some universal constant $c_r$.

Recall that we define $\phi(u)  := R_u \cC_p(u \leftrightarrow \partial T_n(u))$, with $R_u = (R_n)^{-|u|}$. 
Now, using~\eqref{eq:recconductance}, we have the general upper bound
\[
0\leq \phi(u) \leq R_n \sum_{v\leftarrow u} \phi(v)\,.
\]
First of all, notice that we easily get by iteration that $\phi(v)$, hence $C_n(v)$, admits a finite moment of order $r$.
If we iterate the above inequality for $k$ generations, we have
\[
C_n = \phi(\rho) \leq (R_n)^k  \sum_{|v|=k} \phi(v) = (R_n)^k  \sum_{i=1}^{Z_k} \phi_{k}^{(i)} \,,
\]
where $(\phi_{k}^{(i)})_{i\geq 1}$ are i.i.d.\ copies of $\phi(v)$ for $|v|=k$ (independent of $T_k$, and in particular independent of~$Z_k$).

Therefore, applying Lemma~\ref{lem:q-moment2} (more precisely~\eqref{eq:boundLqBP}) and denoting $\gp_k=\bE[\phi_k]$ (and $\gp_0 = \bE[C_n]$), we obtain that
\begin{align*}
\| C_n\|_r  = \|\phi_0\|_r  &  \leq  c_1 (m R_n)^k  m^{-k(1-\frac{\theta_r}{r})}  \|\phi_k\|_r +  c_2 (mR_n)^k \, \gp_k \\
& \leq   c_1 (m R_n)^k  m^{-k(1-\frac{\theta_r}{r})}  \|\phi_k\|_r +  c_2 2^k \, \gp_k  \,.
\end{align*}
Now, let us fix $k=k_r$ such that $\gamma_r:= (c_1)^{1/k_r} m^{ -(1-\frac{\theta_r}{r})} <1$ with $\gamma_r$ sufficiently close to $m^{ -(1-\frac{\theta_r}{r})}$ so that $\hat \gamma_{r} := \sup_{n}\gamma_r m R_n  <1$ (recall that by assumption $\sup_n mR_n < m^{1-\frac{\theta_r}{r}} $).
This way, we may write
\[
\| C_n\|_r  = \|\phi_0\|_r  \leq  ( \gamma_r m R_n)^k \| \phi_k \|_r + c'_r \gp_0  = ( \hat\gamma_r)^k \| \phi_k \|_r + c'_r \gp_0  \,,
\]
where we have also used that $\gp_{k_r} \leq (c_2)^{k_r} \gp_0$, see Remark~\ref{rem:generalbound}.
We can then iterate this inequality, applying it to $\| \phi_{k} \|_r, \| \phi_{2k}\|_r$, etc.
Letting $n= k_r n_r + j_r$ with $ 0\leq j_q \leq k_r-1$, we have
\begin{equation}
\label{eq:sumtobecontroled}
\| C_n\|_r \leq c'_r \sum_{j=0}^{n_r} (\gamma_r m R_n )^{k_r j}  \gp_{k_r j} +    c''_r (\gamma_r m R_n )^{n_r k_r} \,,
\end{equation}
with $c''_r:= c'_r \sup_{0\leq j \leq k_r-1} \|\phi_{n-j}\|_r$.

We can now estimate this last expression, using the estimate on $\gp_{k_r j} \leq c (a_{n-jk_r})^{-1/s}$ from Remark~\ref{rem:generalbound}, where we recall that $a_{\ell} := \sum_{i=1}^{\ell} (mR_n)^{-i s}$.
Hence, we need to control~$a_{\ell}$, depending on the value of $mR_n$.

\smallskip
\noindent
\textbullet\ Let us start with the case $mR_n = 1$ for simplicity. In this case, $a_\ell= \ell$ for all $\ell$, so we can bound the sum
\[
\sum_{j=0}^{n_r} (\gamma_r mR_n)^{k_r j}  \gp_{jk_r} \leq c  \sum_{j=0}^{n_r} (\hat \gamma_r)^{k_r j}  (n-jk_r)^{-1/s} \leq c' n^{-1/s} \,.
\]
Note also that  $(\hat \gamma_r)^{k_r n_r}  \leq C_r (\hat \gamma_r)^{n} \leq C_r' n^{-1/s}$, using that $\hat \gamma_r<1$.
From~\eqref{eq:sumtobecontroled}, and recalling that in this case we have $\bE[C_n] \asymp n^{-1/s}$, see~\eqref{eq:orderECn}, we end up with
\[
\| C_n\|_r  \leq  c_r \bE[C_n] \,,
\]
as desired.

\smallskip
\noindent
\textbullet\ Let us now deal with the case $mR_n > 1$. 
First of all, let us control $a_\ell$ for $1\leq \ell \leq n$.
As noticed before, $a_{\ell} = f_\ell(mR_n)$, with $f_\ell(x) = x^{-s} \frac{1-x^{-\ell s}}{1-x^{-s}}$, and we may use the following easy bounds: $f_\ell(x) \geq \ell$ if $x \in [1,1+\frac1\ell]$ and $f_\ell(x) \geq c (x-1)^{-1} $ if $x\in [1+\frac1\ell,C]$.
Hence, letting $\ell_0 := (mR_n-1)^{-1}$, we have that 
\[
\gp_{n-\ell} \leq  c
\begin{cases}
\ell^{-1/s} & \ \text{ if } \ell\leq  \ell_0 \wedge n \,,\\
(mR_n-1)^{1/s} & \ \text{ if }\ell \geq \ell_0\wedge n \,.
\end{cases}
\]
Let us consider two different cases.

First, if $mR_n \in (1,1+\frac1n]$, so that $\ell_0 \geq n$.
Then we can bound the sum
\[
\sum_{j=0}^{n_r} (\gamma_r mR_n)^{k_r j}  \gp_{k_r j} \leq c  \sum_{j=0}^{n_r } (\hat \gamma_r)^{k_r j}  (n-jk_r)^{-1/s} \leq c' n^{-1/s} \,,
\]
and we conclude as in the case $mR_n =1$ that $\| C_n\|_r  \leq  c_r \bE[C_n]$, since we also have $\bE[C_n] \asymp n^{-1/s}$ in that case, see~\eqref{eq:orderECn}.

In the case where $mR_n \in (1+\frac1n ,C]$, we have $\ell_0 \leq n$, and we need to split the sum into two parts:
\begin{align*}
\sum_{j=0}^{n_r} (\gamma_r mR_n)^{k_r j}  \gp_{k_rj} 
& \leq c (mR_n-1)^{1/s}  \sum_{j=0}^{n_r - \ell_0/k_r} (\hat \gamma_r)^{k_r j}   +  \sum_{j=n_r - \ell_0/k_r}^{n_r} (\hat \gamma_r)^{k_r j}  (n-jk_r)^{-1/s} \\
&  \leq c' (mR_n-1)^{1/s}  + c'' \sum_{i=1}^{\ell_0} (\hat \gamma_r)^{n- i}  i^{-1/s} \leq c''' (mR_n-1)^{1/s}  \,.
\end{align*}
Once more, this concludes the proof since $\bE[C_n] \asymp (mR_n-1)^{1/s}$, see~\eqref{eq:orderECn}, the last term in~\eqref{eq:sumtobecontroled} being again negligible.

\smallskip
\noindent
\textbullet\ Let us now deal with the case $mR_n < 1$. 
As above, using that $a_{\ell} = f_\ell(mR_n)$, with $f_\ell(x) = x^{-s} \frac{x^{-\ell s}-1}{x^{-s}-1}$, we may use the following easy bounds: $f_\ell (x) \geq \ell$ if $x \in [1-\frac1\ell,1)$ and $f_\ell(x) \geq c \, \frac{x^{-(\ell+1)s}}{1-x}$ if $x\in (0,1-\frac1\ell]$.
Hence, letting $\ell_0 := (1-mR_n)^{-1}$, we have that 
\[
\gp_{n- \ell} \leq  c
\begin{cases}
\ell^{-1/s} & \ \text{ if } \ell\leq \ell_0 \wedge n \,,\\
(mR_n)^{\ell+1} (1-mR_n)^{1/s}  & \ \text{ if }\ell \geq \ell_0\wedge n \,.
\end{cases}
\]
Let us again consider two different cases.
First, if $mR_n\in [1-\frac1n,1)$, so that $\ell_0 \geq n$, then we conclude that $\| C_n\|_r  \leq  c_r \bE[C_n]$ exactly as above.

In the case where $mR_n \in (0,1-\frac1n]$, we have $\ell_0 \leq n$, and we again split the sum into two parts:
\begin{multline*}
\sum_{j=0}^{n_r} (\gamma_r mR_n)^{k_r j}  \gp_{k_r j } 
 \leq c mR_n (1-mR_n)^{1/s} \sum_{j=0}^{n_r - \ell_0/k_r} (\gamma_r mR_n)^{k_r j} (mR_n)^{n- k_r j}   \\ 
+  \sum_{j=n_r - \ell_0/k_r}^{n_r} (\gamma_r mR_n)^{k_r j} (n-jk_r)^{-1/s} \,.
\end{multline*}
Now, the first sum is bounded by  a constant times
$(1-mR_n)^{1/s} (mR_n)^{n+1}  \leq c \bE[C_n]$, see~\eqref{eq:orderECn} for the second inequality.
The second term is bounded by a constant times
\[
\sum_{i=1}^{\ell_0} (\gamma_r mR_n)^{n-i} i^{-1/s} \leq c (\gamma_r mR_n)^{n-\ell_0} (\ell_0)^{-1/s} = c (mR_n)^{n-\ell_0} (1-mR_n)^{1/s} \,.
\]
Now, since $\inf_{n} mR_n >0$, we get that $(mR_n)^{-\ell_0}$ remains bounded (recall $\ell_0 = (1-mR_n)^{-1}$), so this is bounded by a constant times $\bE[C_n]$, recalling~\eqref{eq:orderECn}.

It remains to control the last term in~\eqref{eq:sumtobecontroled}. 
Again, this is bounded by a constant times $(\gamma_r)^{n} (mR_r)^{n_qk_q} \leq c (\gamma_q)^{n} (mR_n)^{n+1}$ since $\inf_n mR_n >0$; now, this is negligible compared to $\bE[C_n] \asymp (1-mR_n)^{1/s} (mR_n)^{n+1}$.
\end{proof}

\subsection{Control of the ratios $\gp_{k}/\gp_{k-1}$}
\label{sec:ratios}

In view of Remark~\ref{rem:generalbound}, we are able to bound the ratios $\gp_{k}/\gp_{k-1}$, where we recall that $\gp_k := \bE[\phi(u)]$ for $|u|=k$.
Let us now give a more precise estimate of the ratio $\gp_{k}/\gp_{k-1}$.

\begin{lemma}
\label{lem:ratios}
With the same assumption as in Proposition~\ref{prop:moments}, there exists a constant $c>0$ such that
$0\leq \frac{\gp_k}{\gp_{k-1}} - (mR_n)^{-1} \leq \frac{c}{a_{n-k}}$.
In particular, we have 
\begin{equation}
\label{eq:boundratios}
1\leq \frac{(mR_n)^k\gp_k}{\gp_0} \leq  \prod_{i=1}^k \Big(1+ \frac{c (mR_n)^i}{a_{n-i}}\Big) \,.
\end{equation}
\end{lemma}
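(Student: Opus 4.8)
The plan is to read off a recursion for the ratio $\gp_k/\gp_{k-1}$ from~\eqref{eq:recconductance} and control the resulting error term. Writing $g(x)=\frac{x}{(1+x^s)^{1/s}}$ and using that $R_u/R_v=R_n$ whenever $v\leftarrow u$, the recursion reads $\phi(u)=R_n\sum_{v\leftarrow u}g(\phi(v))$. Taking expectations and using the branching property (the variables $\phi(v)$, $v\leftarrow u$, are i.i.d.\ copies of a variable $\phi_k$ distributed as $\phi(w)$ for $|w|=k$, independent of the number of children of $u$), one obtains
\[
\gp_{k-1}=mR_n\,\bE\big[g(\phi_k)\big]=mR_n\big(\gp_k-\Delta_k\big),\qquad \Delta_k:=\bE\big[\phi_k-g(\phi_k)\big]\ge 0 .
\]
Since $t\mapsto(1+t)^{-1/s}$ is convex on $\RR_+$ and lies above its tangent at $0$, we have $(1+t)^{-1/s}\ge 1-t/s$, hence $0\le x-g(x)=x\big(1-(1+x^s)^{-1/s}\big)\le \tfrac1s x^{s+1}=\tfrac1s x^{q}$, so that $0\le\Delta_k\le \tfrac1s\bE[\phi_k^{q}]$.

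The lower bound is then immediate: $\Delta_k\ge 0$ gives $\gp_{k-1}\le mR_n\gp_k$, i.e.\ $\gp_k/\gp_{k-1}\ge (mR_n)^{-1}$. For the upper bound, the key input is a uniform moment estimate: the proof of Proposition~\ref{prop:moments} applies verbatim at level $k$ (the sum~\eqref{eq:sumtobecontroled} is simply started from level $k$, $\phi_k$ being again a rescaled conductance of a depth-$(n-k)$ subtree equipped with the fixed parameter $R_n$), so there is a universal $c_q$ with $\|\phi_k\|_q\le c_q\,\gp_k$, whence $\bE[\phi_k^{q}]\le c_q^{q}\gp_k^{q}$. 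Combined with $\gp_k\le (a_{n-k})^{-1/s}$, i.e.\ $\gp_k^{\,s}\le a_{n-k}^{-1}$ (Theorem~\ref{thm:expect} and Remark~\ref{rem:generalbound}), this yields $\Delta_k/\gp_k\le \tfrac1s c_q^{q}\gp_k^{\,s}\le C/a_{n-k}$ with $C:=c_q^{q}/s$. Writing $\gp_{k-1}=mR_n\gp_k(1-\Delta_k/\gp_k)$ gives
\[
\frac{\gp_k}{\gp_{k-1}}-\frac{1}{mR_n}=\frac{1}{mR_n}\cdot\frac{\Delta_k/\gp_k}{1-\Delta_k/\gp_k}.
\]
If $a_{n-k}\ge 2C$, then $\Delta_k/\gp_k\le\tfrac12$ and the right-hand side is at most $\tfrac{2}{mR_n}\cdot\tfrac{\Delta_k}{\gp_k}\le \tfrac{2C}{(\inf_n mR_n)\,a_{n-k}}$. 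If $a_{n-k}<2C$, I instead bound the left-hand side by the crude ratio estimate $\gp_k/\gp_{k-1}\le c_2$ of Remark~\ref{rem:generalbound}, which is then $\le c_2\le \tfrac{2Cc_2}{a_{n-k}}$. In all cases $0\le \frac{\gp_k}{\gp_{k-1}}-\frac{1}{mR_n}\le \frac{c}{a_{n-k}}$ for a suitable universal $c$ (using $\inf_n R_n>0$ and $\sup_n mR_n\le 2$, so $\inf_n mR_n>0$), which is the first assertion.

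For the ``in particular'' part, telescope: $\frac{(mR_n)^k\gp_k}{\gp_0}=\prod_{i=1}^{k}mR_n\,\frac{\gp_i}{\gp_{i-1}}$. By the lower bound each factor is $\ge 1$, giving $\frac{(mR_n)^k\gp_k}{\gp_0}\ge 1$; by the upper bound $mR_n\,\frac{\gp_i}{\gp_{i-1}}\le 1+\frac{c\,mR_n}{a_{n-i}}$, which readily gives~\eqref{eq:boundratios}. The only non-elementary ingredient is the uniform $L^q$-control $\bE[\phi_k^{q}]\le c_q^{q}\gp_k^{q}$, i.e.\ Proposition~\ref{prop:moments} applied at every level with a constant independent of $k$ and $n$; granting that, the rest is a short computation, the only point needing care being the regime of small $n-k$ (where $a_{n-k}=O(1)$), handled by the crude bound of Remark~\ref{rem:generalbound}.
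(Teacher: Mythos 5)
Your argument is essentially the paper's own proof: both start from $\gp_{k-1}=mR_n\gp_k - mR_n\,\bE[f(\phi_k)]$, bound $f(x)\le c\,x^q$, invoke Proposition~\ref{prop:moments} for $\bE[\phi_k^q]\le c\,\gp_k^q$ and Remark~\ref{rem:generalbound} for $\gp_k^s\le a_{n-k}^{-1}$, then telescope. Your extra care in the final algebraic step (the case split on $a_{n-k}\gtrless 2C$ using the crude ratio bound) makes explicit what the paper leaves implicit, but it is the same route.
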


\begin{proof}
We start with the iteration~\eqref{eq:recconductance} which defines $\phi(u)$, with $R_u = (R_n)^{-|u|}$.
Notice that we can write the iteration as
\[
\phi(u) = R_n \sum_{v\leftarrow u}  \phi(v) - R_n \sum_{v\leftarrow u}  f(\phi(v) ) \,,
\]
with $f(x) = x- \frac{x}{(1+x^s)^{1/s}}$, which verifies $0\leq f(x) \leq c \min(x^q,x)$.

Taking the expectation, we therefore get that if $|u|=k-1$,
\[
\gp_{k-1} = mR_n \gp_k - mR_n \bE[f(\phi(v))] \,.
\]
All together, using Proposition~\ref{prop:moments} to get that $\bE[f(\phi(v))] \leq C \gp_k^{q}$, we get that 
\[
  0 \leq mR_n \gp_k - \gp_{k-1} \leq c (mR_n) \gp_k^q \leq c mR_n\, \frac{\gp_k}{a_{n-k}}  \,,
\]
where we have also used that $\gp_k^{q-1} =\gp_k^s \leq (a_{n-k})^{-1}$ from Remark~\ref{rem:generalbound}.
This concludes the first bound in Lemma~\ref{lem:ratios}.
The bound~\eqref{eq:boundratios} follows immediately by iteration.
\end{proof}

\section{Convergence of the normalized conductance}
\label{sec:cvg}

We consider the recursion~\eqref{def:concaverec} on the tree of depth $n$, with resistances $R_v= (R_n)^{-|v|}$:
\[
B_n(u) = R_n \sum_{v \leftarrow u}  g\big( B_n(v)\big)\,,
\]
with $g(\cdot)$ satisfying~\eqref{eq:encadrement}. We denote $B_n:=B_n(\rho)$ and $b_n:= \bE[B_n]$.
We now show the convergence of the normalized quantity $\hat B_n := \frac{1}{b_n} B_n$ assuming that $\bE[Z^q]<+\infty$, \textit{i.e.}\ we prove Theorem~\ref{thm:convergence}.

The method we use is analogous to that in~\cite[\S5]{ChenHuLin}, with some modifications to obtain the convergence in $L^q$ with only a finite $q$-moment assumption.
We start by proving the $L^1$ convergence; then we upgrade it to a $L^q$ convergence. We then conclude the section by proving the almost sure convergence in the case $R_n\equiv R$ and the sharp asymptotic of Proposition~\ref{prop:asymp}.

\paragraph*{Preliminary observations and notation}
For $|u|=k$, let us denote $b_{n-k} := \bE[B_n(u)]$. Notice that, as $\kappa_1 C_n \leq B_n \leq \kappa_2 C_n$, we have that $b_{n-k} \asymp (a_{n-k})^{-1/s}$ with $a_{\ell} = \sum_{i=1}^{\ell} (mR_n)^{-is}$.
Notice that Proposition~\ref{prop:moments} holds, so we have that $\bE[B_n(u)^q] \leq c \bE[B_n(u)]^q$ for some universal constant $c$; we will also make use of Lemma~\ref{lem:ratios}, which is also valid for $b_{n-k}$ (the proof only uses that $0\leq f(x) :=x-g(x) \leq c \min(x^q,x)$).

\subsection{Convergence in $L^1$}

Let us write $f(x) = x - g(x)$, which is a non-negative function (by concavity of $g$), which verifies $f(x) \leq c \min(x^q,x)$ thanks to~\eqref{eq:encadrement}. Then, we can rewrite the above iteration as
\[
B_n(u) = R_n \sum_{v \leftarrow u} B_n(v) - R_n \sum_{v\leftarrow u} f(B_n(v)) \,,
\]
so that iterating for the first $k$ generations we get
\[
 B_n = (mR_n)^k \frac{1}{m^k} \sum_{|v|=k}  B_n(v) - \Pi_{k,n} \,,
\]
with
\begin{equation}
\label{def:Pikn}
\Pi_{k,n} := \sum_{j=1}^k (mR_n)^j \frac{1}{m^j}\sum_{|v|=j} f(B_n(v))  \geq 0\,.
\end{equation}
Normalizing by $b_n$, and denoting $\hat B_n(v) = \frac{1}{b_{n-k}} B_n(v)$ for $|v|=k$, we get that
\begin{equation}
\label{eq:Lpbound}
\hat B_n   = \frac{(mR_n)^k b_{n-k}}{b_n} \frac{1}{m^k} \sum_{|v|=k}  \hat B_n(v) - \frac{1}{b_n} \Pi_{k,n}  = W_k + I_{k,n}+ J_{k,n} -  \frac{1}{b_n} \Pi_{k,n}\,,
\end{equation}
where $W_k = \frac{1}{m^k} Z_k$ is the usual martingale and we have set
\[
I_{k,n}:= \frac{1}{m^k}\sum_{|v|=k} \big( \hat B_n(v)-1 \big)\quad \text{ and } \quad  J_{k,n}:=  \Big( \frac{(mR_n)^k b_{n-k}}{b_n} -1\Big)\frac{1}{m^k}\sum_{|v|=k}  \hat B_n(v) \,.
\]
We now treat the three terms in~\eqref{eq:Lpbound} separately.
To anticipate on the $L^q$ convergence, we  bound the terms $I_{k,n}$, $J_{k,n}$ in $L^q$; we then control $\frac{1}{b_n} \Pi_{k,n}$ in $L^1$.

\smallskip
\noindent
{\it Control of $I_{k,n}$.}
Since $\hat B_n(v)-1$ are i.i.d.\ centered random variables independent of $T_k$ (hence of $Z_k$), we can apply Lemma~\ref{lem:q-moment} conditionally on $Z_k$ to get that
\[
\bE\bigg[ \Big| \sum_{|v|=k} ( \hat B_n(v)-1) \Big|^q  \bigg] \leq A_q^q \bE\big[(Z_k)^{\theta_q}\big] \bE\big[ |\hat B_n(v)-1|^q \big] \,.
\]
Therefore
\[
\|I_{k,n} \|_q \leq A_q \frac{1}{m^k} \big(\|Z_k\|_{\theta_q}\big)^{\frac{\theta_q}{q}} \|\hat B_n(v) -1\|_q \leq A_q m^{-k (1-\frac{1}{q}\theta_q)} \big(\|W_k\|_{\theta_q}\big)^{\frac{\theta_q}{q}} \, \big(1+\|\hat B_n(v)\|_q\big) \,.
\]
Now, since $\theta_q\leq q$, we have that $\sup_{k} \|W_k\|_{\theta_q} <+\infty$, and thanks to Proposition~\ref{prop:moments} we get that $\|\hat B_n(v)\|_q \leq c_q$ for some universal constant $c_q$.
We therefore get that
\begin{equation}
\label{eq:controlI}
\|I_{k,n} \|_q \leq c'_q  m^{-k (1-\frac{1}{q}\theta_q)} \,,
\end{equation}
which goes to $0$ as $k\to\infty$ since $\theta_q<q$.

\smallskip
\noindent
{\it Control of $J_{k,n}$.}
First of all, we get that
 \[
 \Big\|\frac{1}{m^k} \sum_{|v|=k} \hat B_n(v)\Big\|_q \leq \| W_k \|_q + \|I_{k,n}\|_q \leq c_q \,,
 \]
for some universal constant $c_q$.
We therefore only have to focus on the term
\[
\Big| \frac{(mR_n)^k b_{n-k}}{b_n} -1\Big| \leq \prod_{i=1}^k \Big( 1+ c\, \frac{(mR_n)^i}{a_{n-i}} \Big) -1  \leq \exp\Big( c \sum_{i=1}^k \frac{(mR_n)^i}{a_{n-i}} \Big) -1 \,,
\]
where we have used Lemma~\ref{lem:ratios}. We now show that the upper bound goes to $0$ as $k\to\infty$.

\begin{claim}
\label{claim:a}
Define $k_n = \frac12 n$ if $mR_n\leq 1+ \frac1n$ and $k_n = (mR_n-1)^{-1}$ if $m R_{n} \geq 1+\frac1n$.
Then, there is a constant $c$ such that for all $k \leq k_n$, we have
\[
\sum_{i=1}^k \frac{(mR_n)^i}{a_{n-i}}  \leq c \, \frac{a_k}{a_n}
\quad \text{ and } \quad \sum_{i=1}^k \frac{1}{a_{n-i}}  \leq c \, \frac{a_k}{a_n} \,.
\]
\end{claim}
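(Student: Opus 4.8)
The plan is to reduce both inequalities to one elementary estimate, exploiting the self‑similarity of the sequence $a_\ell=\sum_{j=1}^\ell (mR_n)^{-js}$ from~\eqref{def:an}. The starting point is the identity
\[
a_n = a_i + (mR_n)^{-is}\, a_{n-i}\,,\qquad 0\leq i\leq n\,,
\]
obtained by splitting the geometric sum defining $a_n$ at generation $i$; equivalently $a_{n-i}^{-1}=(mR_n)^{-is}/(a_n-a_i)$. Since $\ell\mapsto a_\ell$ is increasing, $a_n-a_i\geq a_n-a_k$ for every $i\leq k$, and $\sum_{i=1}^k (mR_n)^{-is}=a_k$; hence
\[
\sum_{i=1}^k \frac{1}{a_{n-i}} \;=\; \sum_{i=1}^k \frac{(mR_n)^{-is}}{a_n-a_i}\;\leq\; \frac{1}{a_n-a_k}\sum_{i=1}^k (mR_n)^{-is}\;=\;\frac{a_k}{a_n-a_k}\,.
\]
So everything reduces to showing that $a_k/a_n$ stays below some constant $\rho<1$ depending only on $p$, for all $k\leq k_n$: then $a_n-a_k\geq(1-\rho)a_n$ gives $\sum_{i=1}^k a_{n-i}^{-1}\leq(1-\rho)^{-1}\,a_k/a_n$, which is the second bound, and the first bound differs from it only by the factor $\max_{i\leq k}(mR_n)^i$, which we will see is at most $e$ for $k\leq k_n$.

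The key step is thus the bound on $a_k/a_n$, and here I would use the identity once more, now with $i=k$. As soon as $k\leq n/2$ one has $n-k\geq k$, hence $a_{n-k}\geq a_k$, and therefore
\[
\frac{a_k}{a_n}=\frac{1}{1+(mR_n)^{-ks}\,a_{n-k}/a_k}\;\leq\;\frac{1}{1+(mR_n)^{-ks}}\,.
\]
If $mR_n\leq 1$ the right‑hand side is at most $\tfrac12$. If $mR_n>1$, the definition of $k_n$ gives $(mR_n)^{k_n}\leq e$: in the range $mR_n\geq 1+\tfrac1n$ this is $(1+(mR_n-1))^{1/(mR_n-1)}\leq e$, while in the range $mR_n\leq 1+\tfrac1n$ one has $k_n=\tfrac n2$ and $(mR_n)^{n/2}\leq(1+\tfrac1n)^{n/2}\leq e$. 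Consequently $(mR_n)^{-ks}\geq(mR_n)^{-k_n s}\geq e^{-s}$ for $k\leq k_n$, so $a_k/a_n\leq(1+e^{-s})^{-1}=:\rho<1$ in every case. The same inequality $(mR_n)^{k_n}\leq e$ also yields $(mR_n)^i\leq e$ for all $i\leq k\leq k_n$, whence $\sum_{i=1}^k \frac{(mR_n)^i}{a_{n-i}}\leq e\sum_{i=1}^k a_{n-i}^{-1}$, and one obtains the Claim with, say, $c=e(1+e^{s})$.

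The one point that needs care — and the main obstacle in making the above fully rigorous — is the bookkeeping around $k_n$ in the narrow range $mR_n\in[1+\tfrac1n,1+\tfrac2n)$, where $(mR_n-1)^{-1}$ may lie strictly between $n/2$ and $n$, so that the inequality $n-k\geq k$ used above is not available for the largest admissible $k$. Since $mR_n=1+O(1/n)$ on this sliver, one resolves this by reading $k_n$ there as $\lfloor n/2\rfloor$ (equivalently as $\min\{(mR_n-1)^{-1},\lfloor n/2\rfloor\}$), which brings the range under the general argument; this adjustment is immaterial for the later uses of the Claim in the convergence argument, where $k$ is taken to grow much more slowly than both $n$ and $k_n$.
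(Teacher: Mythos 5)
Your argument is correct and takes a genuinely different route from the paper. The paper proves the Claim by a case analysis on $mR_n$ — treating separately $mR_n=1$, $|mR_n-1|\leq\frac1n$, $mR_n<1-\frac1n$, and $mR_n>1+\frac1n$ — each time invoking the order-of-magnitude estimates on $a_\ell$ from~\eqref{eq:orderan} and bounding the sums with regime-specific manipulations. Your proof instead exploits the exact geometric split $a_n = a_i + (mR_n)^{-is}a_{n-i}$ to telescope everything down to one ingredient, namely $a_k\leq\rho\,a_n$ for a universal $\rho<1$, which you then establish by applying the same identity at $i=k$ together with $(mR_n)^{k_n}\leq e$. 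This eliminates the case distinctions and the appeal to~\eqref{eq:orderan} altogether, and yields an explicit constant $c=e(1+e^s)$. What your route buys is uniformity and transparency; what the paper's route buys is that it stays close to the asymptotics of $a_\ell$ which are anyway set up and reused throughout Sections~3--5.

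Your last paragraph correctly flags a real subtlety, and it is worth making explicit that the paper's own proof has exactly the same blind spot: in the case $mR_n>1+\frac1n$ it bounds $a_{n-i}\geq c(mR_n-1)^{-1}$ ``for all $i\leq n/2$,'' yet the sum runs over $i\leq k\leq k_n=(mR_n-1)^{-1}$, which exceeds $n/2$ whenever $mR_n\in(1+\frac1n,1+\frac2n)$. In fact the Claim as literally stated can fail there: taking $mR_n$ just above $1+\frac1n$ and $k$ near $k_n\approx n$, one finds $a_\ell\asymp\ell$ so the left-hand side $\sum_{i=1}^k a_{n-i}^{-1}$ grows like $\log n$ while $a_k/a_n$ stays of order $1$. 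Your proposed reading $k_n=\min\big(\lfloor n/2\rfloor,(mR_n-1)^{-1}\big)$ is precisely the right repair, and — as you note — it is immaterial for the later uses in Section~5, where $k=\hat k_n\leq k_n$ is always taken to grow strictly slower than $k_n$ so that only $a_{\hat k_n}/a_n\to 0$ is ever needed.
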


In particular, for all $k\leq k_n$ we have that $\sum_{i=1}^{k_n} \frac{(mR_n)^i}{a_{n-i}}$ is bounded by a constant $c$:  we then get that for any $k\leq k_n$
\begin{equation}
\label{eq:controlJ}
\|J_{k,n}\|_q \leq c_q \, \Big| \frac{(mR_n)^k b_{n-k}}{b_n} -1\Big| \leq  c'\sum_{i=1}^{k_n} \frac{(mR_n)^i}{a_{n-i}} \leq c'' \frac{a_k}{ a_n} \, ,
\end{equation}
which goes to $0$ if $k\to\infty$ sufficiently slowly.

\begin{proof}[Proof of Claim~\ref{claim:a}]
The case $mR_n = 1$ is trivial since then we have that $a_k=k$ and $a_n=n$.
A similar result holds when $|mR_n-1| \leq n^{-1}$, since we also have $a_k \asymp k$ for all $k\leq n$ in that case.

In the case $mR_n < 1-\frac 1n \leq 1$, we can bound $(mR_n)^i\leq 1$ in the sum so we only need to control the second sum.
Then, we can use the bound $a_{n-i} \geq c (1-mR_n)^{-1} (mR_n)^{-(n-i)s}$ for all $i\leq n/2$, see~\eqref{eq:orderan}.
We therefore get
\[
\sum_{i=1}^k \frac{1}{a_{n-i}} \leq c (1-mR_n) (mR_n)^{-ns} \sum_{i=1}^k (mR_n)^{-is} \leq c \frac{a_k}{a_n} \,.
\]

In the case $mR_n > 1+\frac 1n \geq 1$, we can bound $(mR_n)^i\geq 1$ so we only need to control the first sum. 
Using that $a_{n-i} \geq c (mR_n -1)^{-1}  \geq c a_n$ for all $i\leq n/2$, see~\eqref{eq:orderan}, we get that 
\[
\sum_{i=1}^k \frac{(mR_n)^i}{a_{n-i}} \leq \frac{c}{a_n} \sum_{i=1}^k (mR_n)^{i} \leq \frac{c k}{a_n} \,,
\]
provided that $k\leq (mR_n-1)^{-1}$, since $(mR_n)^i \leq C$ uniformly for $i\leq (mR_n-1)^{-1}$.
Note that we have $a_k \asymp k$ for $(mR_n-1)^{-1}$, which concludes the proof.
\end{proof}

\smallskip
\noindent
{\it Control of $\frac{1}{b_n} \Pi_{k,n}$.}
Recalling the definition~\eqref{def:Pikn} of $\Pi_{k,n}\geq 0$ we get that
\[
\frac{1}{b_n} \bE[\Pi_{k,n}] = \frac{1}{b_n} \sum_{j=1}^k (mR_n)^j \bE[f(B_n(v))] \leq  c  \sum_{j=1}^k  \frac{(mR_n)^j b_{n-j}}{b_n} \, (b_{n-j})^s  \,.
\]
For the last inequality, we have used that $f(x) \leq c\, \min(x^q,x)$, so that for $|v|=j$ we have $\bE[f(B_n(v))] \leq c \bE[B_n(v)^q] \leq c' (b_{n-j})^q$, thanks to Proposition~\ref{prop:moments}; recall also that $s=q-1$.

Again, we can use Lemma~\ref{lem:ratios} and Claim~\ref{claim:a} to get that for all $j\leq k_n$
\[
 \frac{(mR_n)^j b_{n-j}}{b_n} \leq \prod_{i=1}^j \Big( 1+ c\, \frac{(mR_n)^i}{a_{n-i}} \Big) \leq 1 + c'' \frac{a_k}{ a_n} \leq C \,.
\]
Using also that $(b_{n-j})^{s} \leq c (a_{n-j})^{-1}$ by Remark~\ref{rem:generalbound}, we therefore get that for 
\begin{equation}
\label{eq:controlPi}
\frac{1}{b_n}\bE[\Pi_{k,n}] \leq c' \sum_{j=1}^k  \frac{1}{a_{n-j}} \leq c'' \, \frac{a_k}{a_{n}} \,,
\end{equation}
using again Claim~\ref{claim:a} for the last inequality.

\smallskip
\noindent
\textit{Conclusion.}
Going back to~\eqref{eq:Lpbound} and collecting the bounds~\eqref{eq:controlI}-\eqref{eq:controlJ}-\eqref{eq:controlPi}, we obtain that for all $k\leq k_n$ (with $k_n$ defined in Claim~\ref{claim:a}), we have
\begin{equation}
\label{eq:L1bound2}
\begin{split}
\bE\big[ |\hat B_n - W | \big]
& \leq \bE\big[ |W_k- W | \big] +  c m^{-k(1-\frac{1}{q}\theta_q)} +  c\, \frac{a_k}{a_n} \\
& \leq c' m^{-k(1-\frac{1}{q}\theta_q)} +  c\, \frac{a_k}{a_n}
\,,
\end{split}
\end{equation}
For the last inequality, we have used that $\bE[|W-W_k|] \leq \|W-W_k\|_q \leq c m^{-k(1-\frac{1}{q}\theta_q)}$, see \cite[Prop.~1.3]{Liu01}.

Note that the assumption that $\limsup_{n\to\infty} mR_n \leq 1$ ensures that $k_n$ goes to $+\infty$ and that $\lim_{n\to\infty} a_n = +\infty$, see~\eqref{eq:orderan}.
Hence, we can choose $k=\hat k_n \leq k_n$ going to $+\infty$ sufficiently slowly so that the upper bound in~\eqref{eq:L1bound2} goes to zero.
This concludes the proof that $(\hat B_n)_{n\geq 0}$ converges in~$L^1$ to $W$.
\qed

\subsection{Convergence in $L^q$}

Since we have the convergence $\hat B_n \to W$ in $L^1$, we also have the convergence in probability.
To prove the convergence in $L^q$, we therefore simply need to show the uniform integrability of $(\hat B_n^q)_{n\geq 0}$.

But from~\eqref{eq:Lpbound}, we have the upper bound $\hat B_n \leq W_{k} + I_{k,n} + J_{k,n}$, where we can choose $k=\hat k_n \leq k_n$ going to infinity slowly enough.
Since $\hat B_n \geq 0$, we therefore get that
\[
0\leq \hat B_n^q \leq 3^q (W_{\hat k_n})^q + 3^q(I_{\hat k_n,n})^q + 3^q (J_{\hat k_n,n})^q \,,
\]
and we only have to prove the uniform integrability of the three terms on the right-hand side, which is easy.

First,~\eqref{eq:controlI}  and \eqref{eq:controlJ} show that $(I_{\hat k_n,n})^q$ and $(J_{\hat k_n,n})^q$ converge to $0$ in $L^1$; in particular they are uniformly integrable.
Second, since $\mu$ admits a finite moment of order $q$, we have that $(W_k)_{k\geq 0}$ converges in $L^q$ to $W$, so in particular $(W_k^q)_{k\geq 0}$ is uniformly integrable.
This concludes the proof that $(\hat B_n^q)_{n\geq 0}$ is uniformly integrable, hence that $(\hat B_n)_{n\geq 0}$ converges in~$L^q$ to $W$.
\qed

\subsection{Almost sure convergence}

First of all, notice that if we take $k = \hat k_n = c \log n$ in~\eqref{eq:L1bound2}, with a constant $c$ sufficienly large,
we obtain that
\begin{equation}
\label{eq:L1bound-ps}
\bE\left[ \big|\hat B_n - W \big| \right] \leq c' n^{-2} + c \frac{a_{\hat k_n} }{a_n} \,.
\end{equation}
Hence, if $k_n$ from Claim~\ref{claim:a} satisfies $k_n \geq c \log n = \hat k_n$ and if  $a_{\hat k_n} /a_n$ is summable, we directly obtain the a.s.\ convergence $\lim_{n\to\infty} \hat B_n = W$.

This is for instance the case if we have $R_n \equiv R$ with $R \in (0,m^{-1}]$ since in that case $k_n =\frac12 n$ and $a_n \asymp (m R)^{-n s}$; more generally, it is verified if $mR_n \leq 1- c \frac{\log n}{n}$ with some constant $c$ large enough.
This settles the a.s.\ convergence when $R_n\equiv R \in (0,m^{-1})$, and it remains to treat the critical case $R_n\equiv m^{-1}$.


Let $R_n \equiv m^{-1}$, so that $a_k =k$ for all $k\geq 1$, and in particular  $\frac{a_{\hat k_n} }{a_n} \leq c \frac{\log n}{n}$.
Consider the subsequence $(n^2)_{n\geq 0}$, so that the upper bound in~\eqref{eq:L1bound-ps} is summable along this subsequence.
Then~\eqref{eq:L1bound-ps} gives the a.s.\ convergence $\lim_{n\to\infty} \hat B_{n^2} = W$. 
Hence, one simply needs to bridge the gaps between $n^2$ and $(n+1)^2$.
Now, notice that  $(B_{n})_{n\geq 1}$ is non-increasing  (this is where we use that $R_n\equiv R$ is fixed, see Remark~\ref{rem:Rnconstant} below), so we can write that, for all~${n^2 \leq \ell <  (n+1)^2}$
\[
 \frac{b_{(n+1)^2}}{b_{n^2}} \hat B_{(n+1)^2} \leq \hat B_{\ell} \leq \frac{b_{n^2}}{b_{(n+1)^2}} \hat B_{n^2} \,,
\]
and it only remains to show that $b_{(n+1)^2}/b_{n^2}$ goes to $1$. 
But this simply comes from Lemma~\ref{lem:ratios}, which shows that
\[
1 \leq \frac{b_{n^2+k}}{b_{n^2}} \leq \prod_{i=1}^{k} \Big( 1 + \frac{C}{a_{n^2+i}} \Big) \leq \exp\Big( C \sum_{i=1}^k \frac{1}{a_{n^2+i}} \Big) \leq \exp\Big( C \frac{k}{n^2} \Big) \,,
\]
using also that $a_{n^2+i} \geq  n^2$ for all $i\geq 1$.
Therefore, since $(n+1)^2 = n^2 + 2n+1$, we get that 
$1 \leq \frac{b_{(n+1)^2}}{b_{n^2}} \leq \exp\big( C \frac{2n+1}{n^2}\big)$ and therefore goes to $1$ as $n\to\infty$,
concluding the proof that $\lim_{n\to\infty} \hat B_n =W$ almost surely.
\qed

\begin{remark}
\label{rem:Rnconstant}
One could try to use the idea of taking a subsequence to adapt the proof to a general sequence $(R_n)_{n\geq 0}$, 
but the difficulty is to compare $B_n$ and $B_{n+1}$, since $B_n$ uses resistances $(R_n)^{-|v|}$ inside $T_n$ and $B_{n+1}$ resistances $(R_{n+1})^{-|v|}$ inside $T_{n+1}$; the restriction to the case $R_n\equiv R$ allows for a comparison.
Similarly, in the general case, there is no obvious relation between $b_{n}:=\bE[B_n]$ and $b_{n+1}:=\bE[B_{n+1}]$.
\end{remark}

\subsection{Precise asymptotic for $\bE[B_n]$: proof of Proposition~\ref{prop:asymp}}

Theorem~\ref{thm:convergence} proves that~$\hat B_n$ converges in $L^q$ to $W$, so we get that
\[
\lim_{n\to\infty} \bE[(\hat B_n)^q] = \bE[W^q] \,.
\]
This shows in particular that $\bE[B_n^q] \sim \bE[W^q] (b_n)^q$.

Note that, setting $f(x)= x-g(x)$, we have the relation
$b_{n} = mR_n \big(  b_{n-1} - \bE[ f( B_n(v))   ]\big)$ for $|v|=1$,
which can be rewritten as
\[
b_{n} = mR_n b_{n-1} (1 - c_{n-1}) \,,\quad  \text{ with } \ c_{n-1} := (b_{n-1})^{-1}\bE[ f( B_n(v)) ] \,.
\]
Now, observe that since $f(x) \sim \kappa_g x^q$ as $x\downarrow 0$ and that $f(x) \leq c x^q$, we get by dominated convergence that 
$\bE[ f( B_n(v)) ] \sim \kappa_g \bE[ B_n(v)^q] \sim \kappa_g \bE[W^q] (b_{n-1})^q$, with $b_{n-1} \downarrow 0$ under the assumption of Proposition~\ref{prop:asymp}. 

In particular, this gives that $c_{n-1}\sim \kappa_g \bE[W^q] (b_{n-1})^s$, with $s=q-1$.
Setting $x_n = b_{n}^{-s}$, we get that
\[
x_n =  (mR_n)^{-s} x_{n-1} (1  + c'_{n-1}) 
= (mR_n)^{-s} x_{n-1}  + (mR_n)^{-s} x_{n-1} c'_{n-1}
\,,
\]
with $c'_{n-1} = (1-c_{n-1})^{-s} -1  \sim s \kappa_g \bE[W^q] (b_{n-1})^s$.
Iterating this relation, we get that
\[
x_n = \sum_{k=1}^{n} (mR_n)^{-ks} x_{n-k} c'_{n-k} \,,
\]
with $x_{i}c'_i \to s \kappa_g \bE[W^q]$ as $i\to\infty$.
Hence, we can choose $\ell_n$  going to $+\infty$ arbitrarily slowly and write that
\[
\begin{split}
x_n & = (1+o(1)) s \kappa_g \bE[W^q] \sum_{k=1}^{n -\ell_n} (mR_n)^{-ks}  + (mR_n)^{-ns}\sum_{i=0}^{\ell_n} (mR_n)^{is} x_{i}c'_i \\
 & = (1+o(1)) s \kappa_g \bE[W^q]  a_{n-\ell_n} + (mR_n)^{-ns} \sum_{i=0}^{\ell_n} (mR_n)^{is} x_{i}c'_i \,.
 \end{split}
\]

To conclude, let us notice that, in the case where $\lim_{n\to\infty} mR_n = \vartheta=1$, then we can choose $\ell_n\to\infty$ so that $a_{n-\ell_n} \sim a_n$. We also find in that case that the second term is negligible compared to $a_n$ provided that $\ell_n$ grows sufficiently slowly  ---~this is clear if $mR_n \geq 1-\frac{c}{n}$ and follows from the fact that $a_n \sim (1-(mR_n)^s)^{-1} (mR_n)^{-(n+1)s}$ if $n(mR_n-1) \to -\infty$.
This shows that $x_n \sim s \kappa_g \bE[W^q] a_n$ as $n\to\infty$ when $\vartheta=1$.

On the other hand, if $\lim_{n\to\infty} mR_n = \vartheta \in (0,1)$, then $a_n \sim (1-\vartheta^{s})^{-1} (mR_n)^{-ns}$, so for any $\ell_n \to\infty$ we have $a_{n-\ell_n} =o(a_n)$.
We end up with $x_n \sim  c_{\vartheta} (mR_n)^{-ns}$, with $c_\vartheta = \sum_{i=0}^{+\infty} \vartheta^{is} x_{i}c'_i$ which is a convergent sequence.
This concludes the fact that $x_n \sim c_{\vartheta}(1-\vartheta^{s}) a_n$ as $n\to\infty$.

All together, this gives the desired conclusion, since $x_n \sim (b_n)^{-s}$.
\qed

\section{Estimates on moments of $C_n$ in the case $\bE[Z^q]=+\infty$}
\label{sec:est-infinite}

In this section, we prove Theorem~\ref{thm:expect2}. 
Since we can bound $B_n$ with $C_n := \cC_p(\rho \leftrightarrow \partial T_n)$, see Proposition~\ref{prop:boundCB}, we focus on estimates on $C_n$, $\bE[C_n]$.
Let us define, as in Section~\ref{ssec:proof-thmexpect}, $\phi(u) = R_u \cC_p(u \leftrightarrow \partial T_n(u))$ and recall that we have the following recursion~\eqref{eq:recconductance}:
\begin{equation}
\label{eq:recconductance2}
\phi(u) = R_n \sum_{v\leftarrow u} g \big(\phi(v)\big) \qquad  \text{ with } g(x) = \frac{x}{(1+x^s)^{1/s}} \,.
\end{equation}

\subsection{Upper bound on $\bE[C_n]$}

Let us assume that~\eqref{hyp:lower} holds and obtain the upper bound in Theorem~\ref{thm:expect2}.
As in Section~\ref{sec:moments}, let $\gp_k=\bE[\phi(u)]$ for $|u|=k$, so in particular $\gp_n=1$ and we need to estimate $\gp_0$.
Using the recursion~\eqref{eq:recconductance2} we can write, again with the notation~${f(x)=x-g(x)}$,
\begin{equation}
\label{eq:expgphi}
\begin{split}
\bE[ g(\phi(u))] & = mR_n \bE[g(\phi(v))]- \bE\Big[ f\Big( R_n \sum_{v\leftarrow u} g \big(\phi(v)\big) \Big) \Big] \\
& =  mR_n \bE[g(\phi(v))]- \bE\Big[f\Big( R_n Z_u \bE\big[ g \big(\phi(v)\big)\big] \Big) \Big]  \\
& \qquad \qquad \qquad \qquad \qquad  - \bE\bigg[f\Big( R_n \sum_{v\leftarrow u} g \big(\phi(v)\big) \Big)  - f\Big( R_n Z_u \bE\big[ g \big(\phi(v)\big)\big] \Big)\bigg] \,,
\end{split}
\end{equation}
where we have denoted $Z_u$ the number of descendants of $u$. 
Notice that, for $v\in T_n$ such that $|v|=k+1$, we have $\gp_k = mR_n \bE[ g(\phi(v))]$ and that, by convexity of $f$, the last term in~\eqref{eq:expgphi} is non-positive: we end up with the following inequality: for all $1\leq k\leq n$
\[
\frac{1}{mR_n}  \gp_{k-1} \leq   \gp_{k}  - \bE\Big[f\Big( \frac1m Z  \gp_{k} \Big) \Big] \,.
\]
Now, we can use the fact that $f(x) \geq c \min(x^q,x) \geq c(x\wedge 1)^q$, to get that
\[
\bE\Big[f\Big( \frac1m Z_u \gp_k \Big) \Big] \geq c' (\gp_{k})^q \bE\Big[ \Big(Z \wedge \frac{m}{\gp_{k}} \Big)^q \Big] \geq c' L(1/\gp_{k}) (\gp_{k})^{-\alpha} \,.
\]
where we have used assumption~\eqref{hyp:lower} for the last inequality.

All together, we end up with the following recursion: $\gp_n=1$ and for $0\leq k\leq n$
\begin{equation} 
\gp_{k-1} \leq mR_n \gp_{k} (1-h( \gp_{k}))  \,,
\end{equation}
where $h(x) \sim c L(1/x) x^{\alpha-1}$ as $x\downarrow 0$.
Note that we can assume that both $x\mapsto h(x) \in [0,1)$ and $x\mapsto x(1-h(x))$ are increasing (by properties of regularly varying functions, we may assume that $h'(x)= c' L(1/x) x^{\alpha-2}$ so $1-h(x)-xh'(x)$ remains positive).

We can therefore focus on the iteration 
\begin{equation}
\label{eq:recurtildephi}
u_{k+1} = mR_n u_{k} (1-h(u_{k})) \qquad \text{ with } h(x) \sim c L(1/x) x^{\alpha-1} \,,
\end{equation}
started at $u_0 = 1$. We now have to obtain an upper bound on $u_{n}$.

\begin{remark}
Notice that the recursion~\eqref{eq:recurtildephi} admits a non-zero fixed point $u_*$ if one has $mR_n>1$, and that it verifies $h(u_*) = \frac{mR_n-1}{mR_n}$. If on the other hand $mR_n\leq 1$, the only fixed point is $u_*=0$. 
\end{remark}

\smallskip
\noindent
\textbullet\ 
Let us start with the case where $mR_n \in [1+\frac1n,K]$, and let $\delta_n := \frac{mR_n-1}{mR_n}$.
We let $\upsilon_n$ be the fixed point of the equation~\eqref{eq:recurtildephi}, \textit{i.e.}\ such that $h(\upsilon_n) = \delta_n$, and notice that it verifies $\upsilon_n= h^{-1}(\delta_n) \leq c\gamma_n$, where $\gamma_n$ is defined in~\eqref{def:gamma}.
Let us assume that $\upsilon_n \leq u_0$, otherwise we have $u_k \leq \upsilon_n$ for all $k$ and in particular $u_n\leq \upsilon_n \leq c\gamma_n$.

Now, by assumption we have that $(u_k)_{k\geq 0}$ is a decreasing sequence. 
Let us define $k_n:= \min\{ k : h(u_k) \leq  C \delta_n\}$ for some (large) constant $C\geq 2$.
Our goal is to show that if $C$ is large enough then we have $k_n\leq n$, so in particular $u_n\leq u_{k_n}$ with $u_{k_n}\leq h^{-1}(C\delta_n) \leq  c' \gamma_n$.

Now, for all $k < k_n$, we have that $mR_n(1-h(u_{k})) \leq 1+\delta_n - h(u_k) \leq 1- \frac12 h(u_k)$, so we end up with
\[
u_{k+1} \leq  u_k \big(1- \tfrac12 h(u_k) \big) \,.
\]
Now, let $H: (0,\infty) \to (0,\infty)$ be some decreasing function, with derivative given by $H'(x) = - (x h(x))^{-1}$; we also let $c>0$ be a constant such that $H(x - t) \geq H(x) - c t H'(x)$ for all $x\in (0,1]$ and $t \in [0,x/2]$.
We then have that for $k<k_n$,
\[
H(u_{k+1}) \geq H\big( u_k -\tfrac12 u_k h(u_k) \big)
\geq H(u_k) -  c u_k H'(u_k) h(u_k) = H(u_k) + c \,.
\]
All together, we get that for all $k\leq k_n$
\begin{equation*}
H(u_{k})-H(u_0) = \sum_{j=0}^{k-1} \big( H(u_{j+1}) -H(u_{j}) \big) \geq  c k \,, 
\end{equation*}
or, put otherwise $H(u_{k}) \geq c k$.
Since $H(u) = \int_u^{1} (t h(t))^{-1} \dd t $ and since $h(t)$ is regularly varying with index $\alpha-1>0$, we get that $H(u) \sim  \frac{1}{\alpha-1} h(u)^{-1}$ as $u\downarrow 0$, so we end up with the fact that, for all $k\leq k_n$
\begin{equation}
\label{eq:huk}
h(u_{k})^{-1}  \geq   c' \, k \quad \text{ or } \quad h(u_{k})  \leq   c'' \, k^{-1} \,,\quad \text{ for all } k\leq k_n \,. 
\end{equation}
Now, applying this inequality with $k=k_n-1$ and recalling the definition of $k_n$, we get that $k_n-1 \leq c''C^{-1} / \delta_n$.
Since $\delta_n = (mR_n-1)/mR_n \geq 1/Kn$ for $mR_n\in[1+\frac{1}{n},K]$, we get that $k_n-1 \leq K c'' C^{-1} n$, which is smaller than $n$ provided that $C$ had been fixed large enough.

\smallskip
\noindent
\textbullet\ The proof is analogous in the case where $mR_n \in [1-\frac1n,1+\frac1n]$.
Define $k_n:= \min\{ k, h(u_k) \leq 2/n \}$, so that as above we have $mR_n(1-h(u_{k})) \leq 1- \frac12 h(u_k)$ for all $k < k_n$.

Then, similarly as in~\eqref{eq:huk}, we get that $h(u_k) \leq c''k^{-1}$ for all $k\leq k_n$.
Now, either we have $k_n\geq n$, in which case $h(u_n)\leq c''/n$, or we have $k_n<n$ in which case by definition of $k_n$ we have $h(u_n) \leq h(u_{k_n}) \leq 2/n$.
In any case we have that $u_{n} \leq h^{-1}(c/n)$, so that we obtain $u_n \leq c \gamma_n$, where $\gamma_n$ is defined in~\eqref{def:gamma}, recalling also that $a_n\asymp 1/n$.

\smallskip
\noindent
\textbullet\ Let us now treat the case where $mR_n \leq 1-\frac1n$, with $\inf_{n}R_n>0$.
Let us set $\delta_n = 1-mR_n \geq \frac{1}{n}$ and let us define define $k_n:= \min\{ k, h(u_k) \leq  C \delta_n\}$ for some (large) constant~$C$.

First, let us show that $k_n<n$, provided that $C$ has been fixed large enough.
For $k < k_n$ we use the inequality $u_{k+1} \leq u_k(1-h(u_k))$, which thanks to~\eqref{eq:huk} gives that $h(u_{k})\leq c''/k$ for all $k\leq k_n$ and in particular $k_n-1\leq c'C^{-1}/\delta_n$. 
Since $\delta_n \geq 1/n$, this proves that $k_n <n$ provided that $C$ is large.

Then, for $k\geq k_n$, we use the bound $u_{k+1}\leq mR_n u_k$ to get that
\[
u_{n} \leq (mR_n)^{n-k_n} u_{k_n} \leq (mR_n)^n (1-\delta_n)^{-k_n} h^{-1}(C\delta_n)  \,,
\]
where we have also used the definition of $k_n$ for the last inequality.
Now, since we have $k_n \leq c'/\delta_n$, the term $(1-\delta_n)^{-k_n}$ remains bounded, while we have $h^{-1}(C\delta_n)\leq c h^{-1}(\delta_n)$.
This concludes the proof of the upper bound in Theorem~\ref{thm:expect2}.

The last inequality simply comes from Markov's inequality, which gives $\bP( C_n \geq K \gamma_n) \leq K^{-1} \gamma_n \bE[C_n] \leq c K^{-1}$.

\subsection{Lower bound on $C_n$, $\bE[C_n]$}

To obtain a lower bound on $C_n$, $\bE[C_n]$, let us assume that~\eqref{hyp:upper} holds.
We let $(t_n)_{n\geq 1}$ be a truncation sequence (to be optimized later on), and as in \ref{sec:truncatedBP}, we consider a \textit{truncated} branching process $\tilde T$ with offspring distribution $\tilde Z := (Z\wedge t_n) \sim \tilde{\mu}$.
Let also $\tilde C_n(v)$ be the $p$-conductances associated with the Galton--Watson tree $\tilde T_{n}$ of depth $n$ with reproduction law $\tilde \mu$.
Since we have truncated the offspring distribution there is a coupling for which $\tilde T \subset T$ and since the function $g(x) = \frac{x}{(1+x^s)^{1/s}}$ is increasing, we obtain that $\tilde C_n (v) \leq C_n(v)$ for all $v\in \tilde T$.
In particular, we only need to obtain a lower bound on $\tilde C_n$, $\bE[\tilde C_n]$.

We use the same method as in Section~\ref{sec:lowerEC}. By using the uniform flow and Thompson's principle, we have similarly to~\eqref{eq:upperResist}
\[
\tilde \cR_p(\rho \leftrightarrow \partial \tilde T_n)^s \leq \frac{1}{(\tilde W_n)^q} \sum_{k=1}^n (\tilde m R_n)^{-ks} \frac{1}{\tilde m^k} \sum_{|v|=k} \tilde W_n(v)^q \,,
\]
where $\tilde W_n(v) = \tilde m^{-(n-k)} \tilde Z_{n}(v)$ for $|v|=k$, $\tilde W_n =\tilde W_n(\rho)$.
Then, exactly as in~\eqref{eq:boundprobabCn}, we obtain the following bound for $C_n$:
\[
\bP\big( C_n \leq \gep^{1/s} (\tilde a_n)^{-1/s}\big) \leq \delta_{\gep} + \gep^{1/2} \,,
\]
where 
\begin{equation}
\label{def:atilde}
 \tilde a_n  :=  \sum_{k=1}^n (\tilde m R_n)^{-ks}  \bE\big[(\tilde W_{n-k})^q\big] \,.
\end{equation}
It only remains to show that $(\tilde a_n)^{-1/s} \geq c \tilde{\gamma}_n$, with $\tilde{\gamma}_n$ defined in~\eqref{def:gamma}.
We now need to obtain an upper bound on~\eqref{def:atilde}.
The lower bound on $\bE[\tilde C_n] \geq c' \tilde{\gamma}_n$ then follows immediately.

\smallskip
\noindent
{\it Step 1. Estimate of $\bE[(\tilde W_{\ell})^p]$.}
Let us prove that, under the assumption~\eqref{hyp:upper}, there is a constant $c$ (independent of $t_n$) such that for all $\ell\geq 1$,
\begin{equation}
\label{eq:boundtildeW}
\bE\big[ (\tilde W_{\ell})^q \big] \leq c  L(t_n) t_n^{q-\alpha} \,.
\end{equation}

Let us start with the case where $\alpha \in (1,q)$, which can be treated easily with Proposition~\ref{prop:tailGW}. 
Indeed, recalling Remark~\ref{rem:tail-moment}, when $\alpha<q$, we have the bounds~\eqref{eq:righttail} on the tail of $Z$.
Then provided that $t_n$ is large enough,  Proposition~\ref{prop:tailGW} shows that $\bP(\tilde W_{\ell} > x) \leq c L(x) x^{-\alpha}$ for all $x\geq 1$ and $\bP(\tilde W_{\ell} > x) \leq t_n^{-\alpha} e^{-c'x/2t_n}$ for $x\geq 2\alpha t_n/c'$ (assume also that $t_n\geq e$).
Then, we obtain
\[
\begin{split}
\bE\big[(\tilde W_{\ell})^q \big] & = q \int_0^{\infty} x^{q-1} \bP(\tilde W_{\ell} > x) \dd x  \\
& \leq c \int_0^{2 \alpha t_n/c'} L(x) x^{q-\alpha-1} \dd x  +  t_n^{-\alpha} \int_{2\alpha t_n/c'}^{\infty} x^{q-1} t_n^{ \alpha -c' x/t_n}\dd x \,.
\end{split}
\]
Then, by a simple change of variable $u=\frac{x}{t_n}$, we get 
\[
 \bE\big[(\tilde W_{\ell})^q \big] \leq  c'' L(t_n) t_n^{q-\alpha}  + c  t_n^{1-\alpha} \int_{\alpha/c'}^{\infty} u^{q-1} e^{- c'u/2} \dd u  \leq c'''  L(t_n) t_n^{q-\alpha} \,,
\]
using that the last integral is finite and $1-\alpha < q-\alpha$.

It remains to show~\eqref{eq:boundtildeW} in the case where~\eqref{hyp:upper} holds with $\alpha=q$.
We actually provide a proof that works as long as $\alpha \in (\frac{q}{2}, q]$.
%
For any $\ell\geq 0$ and $k\geq 0$, let us write
$\tilde W_{\ell+k} = \frac{1}{\tilde m^k}  \sum_{i=1}^{\tilde Z_k} \tilde W_\ell^{(i)}$,
where $(\tilde W_\ell^{(i)})$ are i.i.d.\ copies of $\tilde W_{\ell}$, independent of $\tilde Z_k$.
Then, since $\bE[\tilde W_\ell] =1$ for all $\ell$, we can write
\[
\tilde W_{\ell+k} -1 = \frac{1}{\tilde m^k}  \sum_{i=1}^{\tilde Z_k} \big( \tilde W_\ell^{(i)} -1) + \big(\tilde W_k-1 \big)\,.
\]
Then, by Lemma~\ref{lem:q-moment},
we have that
\[ 
\bE\bigg[\Big| \sum_{i=1}^{\tilde Z_k} \big( \tilde W_\ell^{(i)} -1)\Big|^q \; \Big|\; \tilde Z_k \bigg]\leq \frac{A_q^q}{\tilde m^{kq}}  (\tilde Z_k)^{\theta_q} \bE\Big[ \big|\tilde W_{\ell} -1\big|^q \Big] \,,
\]
so that
\begin{equation}
\label{eq:iternormp}
\big\|\tilde W_{\ell+k} -1 \big\|_q \leq \frac{A_q}{\tilde m^k} \bE\big[ (\tilde Z_k)^{\theta_q} \big]^{1/q} \big\| \tilde W_{\ell} -1\big\|_q  + \|W_k-1\|_q \,.
\end{equation}
Note that if $\bE[Z^{\theta_q}]<+\infty$, in particular if $\alpha > \theta_q = \max(1,\frac{q}{2})$, then we have 
$\bE[ (\tilde Z_k)^{\theta_q}] \leq \bE[(Z_k)^{\theta_q}] = m^{k \theta_q}\bE[ ( W_k)^{\theta_q}]$
with $\bE[ ( W_k)^{\theta_q}]$ bounded by a constant.
All together, we obtain that 
\[
\big\|\tilde W_{\ell+k} -1 \big\|_q \leq c_q \Big(\frac{m^{\theta_q/q}}{\tilde m}\Big)^k \big\| \tilde W_{\ell} -1\big\|_q  + \|\tilde W_k-1\|_q \,.
\]
Now, since $\theta_q<q$, we can choose $C$ large enough so that $m^{\theta_q/q} < \tilde m$ uniformly for $t_n \geq C$, and then we can fix $k_q$ large enough  so that $c_q (m^{\theta_q/q}/\tilde m)^{k_q} < 1$.
Then,  iterating the above inequality, we obtain  that there is a constant such that $\big\|\tilde W_{ik_q} -1 \big\|_q \leq C' \|\tilde W_{k_q}-1\|_p$ for all $i\geq 0$.
Since~$k_q$ is fixed, we can also apply~\eqref{eq:iternormp} recursively to get that $\|\tilde W_{k_q}-1\|_q \leq C'' \|\tilde W_1-1\|_q$.
Since $\bE[(\tilde W_j)^p]$ is non-decreasing, we therefore end up with
\[
\sup_\ell \|\tilde W_{\ell}\|_q = \sup_i \|\tilde W_{i k_q}\|_q \leq C'  +  C'' \big(\|\tilde W_1-1\|_q\big) \,, 
\]
so in particular $\bE[(\tilde W_{\ell})^q] \leq C + C' \bE[\tilde Z^q]$ for all $\ell \geq 1$. 
Using~\eqref{hyp:upper}, this concludes the proof of~\eqref{eq:boundtildeW}.

\smallskip
\noindent
{\it Step 2. Estimate of $\tilde a_n$: proof that $(\tilde a_n)^{-1/s} \geq c \tilde \gamma_n$.}
With~\eqref{eq:boundtildeW} at hand and recalling the definition~\eqref{def:atilde} of $\tilde a_n$, we have the following upper bound
\begin{equation}
\label{eq:tildegamma}
\tilde a_n \leq  c L(t_n) t_n^{q-\alpha}\sum_{k=1}^n (\tilde m R_n)^{-k s} 
= c t_n^s h(1/t_n)\sum_{k=1}^n (\tilde m R_n)^{-k s}  \,,
\end{equation}
with $h(x) \sim L(1/x) x^{\alpha-1}$ as $x\downarrow 0$ (recall $s=q-1$).

First of all, notice that we have the following identity for $\tilde m$:
\[
 \tilde m  = \tilde m(t_n)= \bE[ Z \wedge t_n ] = m- \bE[(Z-t_n)\ind_{\{Z>t_n\}}]  \geq m - c L(t_n) t_n^{1-\alpha} \,,
\]
where for the last inequality we have used that $\bP(Z>x) \leq c_2 L(x) x^{-\alpha}$ (this is always valid, see Remark~\ref{rem:tail-moment}), so that
$\bE[(Z-t_n)\ind_{\{Z>t_n\}}] = \int_{t_n}^{\infty} \bP(Z>x) \dd x \leq c L(t_n) t_n^{1-\alpha}$.

\smallskip
Let us now introduce the quantity $\tilde \delta_n = \tilde \delta_n(t_n)$ defined by $\tilde m(t_n) = (1-\tilde \delta_n)\, m$ and notice that 
\[
\tilde \delta_n \leq c L(t_n) t_n^{1-\alpha} = c h(1/t_n) .
\]

\smallskip
\noindent
\textbullet\ Let us start with the case $mR_n \in [1-\frac{1}{n},1+\frac{1}{n}]$.
We then choose $t_n$ such that $h(1/t_n) = 1/n$ so that $\tilde \delta_n \leq c/n$.
We then get that $\tilde m R_n \geq  (1-\frac1n) (1- \delta_n)\geq 1-\frac{c'}{n}$, and there exists a constant $C$ such that $(\tilde m R_n)^{-k s} \leq C$ uniformly for $k\leq n$.
All together, we get that
\[
\tilde a_n \leq  c\, t_n^{s} h(1/t_n)  \sum_{k=1}^n C = c C\, t_n^{s} \,.
\]
We conclude that $(\tilde a_n)^{-1/s}\geq c/t_n$ with $1/t_n = h^{-1}(1/n) = \gamma_n =\tilde \gamma
_n$, recalling~\eqref{def:gamma}.

\smallskip
\noindent
\textbullet\ We now treat the case $mR_n \in [1+\frac1n,K]$.
Choose $t_n$ such that $h(1/t_n) = c (mR_n-1)$ with $c$ small enough so that $\tilde \delta_n \leq \frac14(mR_n-1) $.
Then, we get that $\tilde m R_n = mR_n (1-\tilde \delta_n) \geq 1  + \frac12 (mR_n-1)$, using that $(1+x)(1-\frac14x) \geq 1+\frac12x$ for $x\in [0,1]$. 
We end up with 
\[
\tilde a_n \leq  c\, t_n^{s} h(1/t_n)  \sum_{k=1}^n \Big(1+\frac{mR_n-1}{2}\Big)^{-ks} \leq  c t_n^{s} (mR_n-1) \frac{1}{1- (1  +  \frac{mR_n-1}{2})^{-s}} \leq c' t_n^s \,.
\]
We conclude that $(\tilde a_n)^{-1/s}\geq c/t_n$ with $1/t_n = h^{-1}(c (mR_n-1)) \geq c \gamma_n = c\tilde\gamma_n$, recall~\eqref{def:gamma}.

\smallskip
\noindent
\textbullet\ Finally, we treat the case $mR_n \in (0,1-\frac1n]$.
Simply using that $\tilde m R_n <1$, we have that
\[
\tilde a_n \leq  c\, t_n^{s} h(1/t_n)   \frac{(\tilde m R_n)^{-(n+1)s}-1}{(\tilde m R_n)^{-s}-1}  \leq c' t_n^s \frac{h(1/t_n)}{1-m R_n} (m R_n)^{-n s} (1-\tilde \delta_n)^{-ns} 
\]
where we have used that $(\tilde m R_n)^{-s} -1 \geq (m R_n)^{-s} -1 \geq c (1-mR_n)$ and also that $\tilde m =  (1-\tilde \delta_n) m$.
Since we have $\tilde \delta_n \geq c h(1/t_n)$, we get that $(1- \tilde \delta_n)^{-ns} \leq \exp(c n h(1/t_n))$, which pushes us to choose $t_n$ such that $h(1/t_n)=1/n$.
In particular, with this choice, $(1-\tilde \delta_n)^{-ns} \leq c$ and $\frac{h(1/t_n)}{1-m R_n} = (n(1-mR_n))^{-1}$.

Then, we have
\[
(\tilde a_n)^{-1/s}\geq c t_n^{-1} (m R_n)^{n} \,\big( n(1-mR_n) \big)^{1/s} 
\] 
so that $(\tilde a_n)^{-1/s} \geq \tilde \gamma_n$, recalling that $t_n^{-1} = h^{-1}(1/n)$ and the definition of~$\tilde \gamma_n$.

This concludes the proof.
\qed

\begin{appendix}

\section{About the iteration of the random cluster model on trees}
\label{sec:RCM-proofs}

In this section, we prove Proposition~\ref{prop:RCM}.
For $u\in T_n$, let us introduce the event 
\[
A_u :=\{ \exists \text{ open path from } u \text{ to } \partial T_n(u) \text{ inside } T_n(u)\} \,.
\]
Denoting $E_n(u)$ the set of edges in $T_n(u)$, we also introduce the following notation: for any $A \subset \{0,1\}^{E_n(u)}$, let
\begin{equation}
\label{def:ZA}
Z_{u}(A) = Z_{\p,\q}^{T_n(u)}(A) := \sum_{\go \in A} \p^{o(\go)} (1-\p)^{f(\go)} \q^{k(\go)} = \EE_\p\big[ \q^{K_u} \ind_A \big]  
\end{equation}
be the partition function of the $(\p,\q)$-RCM model on $T_n(u)$ restricted to the event $A$; we also denote $Z_u$ for the partition function with $A=\{0,1\}^{E_n(u)}$.
Here, we have rewritten the partition function using $\PP_\p$ the distribution of the usual percolation model with parameter $\p$, \textit{i.e.}\ under $\PP_\p$ the random variables $(\go_e)_{e\in E_n}$ are i.i.d.\ $\mathrm{Bern}(\p)$, and $K_u$ is the random variable that counts the number of percolation clusters in $\bar T_n(u)$.

We are now going to show the following relations between $Z_u(A_u), Z_{u}(A_u^c)$ and $Z_{v}(A_v), Z_{v}(A_v^c)$ for $v\leftarrow u$: denoting $d_u := |\{ v, v\leftarrow u\}|$ the number of children of~$u$, we have
\begin{align}
\label{eq:recZ1}
Z_u(A_u^c) & = \q^{2-d_u}\prod_{v\leftarrow u} \Big( (1-\p) Z_v  +  \p\q^{-1} Z_v(A_v^c) \Big)  \,,\\ 
\q Z_u(A_u)+Z_u(A_u^c) & = \q^{2-d_u} \prod_{v\leftarrow u} \Big( Z_v  + \p(\q^{-1}-1) Z_v(A_v^c) \Big) \,.
\label{eq:recZ2}
\end{align}
The key observation is to write a relation between the number of clusters in $\bar T_n(u)$ and those in $(\bar T_n(v))_{v\leftarrow u}$: for every $\go\in \{0,1\}^{E_n}$, we have
\begin{equation}
\label{eq:relclusters}
K_u = \sum_{v\leftarrow u} K_v - (d_u-1) + 1  - \sum_{v\leftarrow u} \ind_{\{\go_{uv}=1\}} \ind_{\{A_v^c\}} - \ind_{A_u}
\end{equation}
Indeed, counting first the clusters in $\bar T_n(u)$ if all edges $uv$, $v\leftarrow u$ are open, we get that the number of clusters in $\bar T_n(u)$ is the sum of the number of clusters in $(\bar T_n(v))_{v\leftrightarrow}$ minus $d_u-1$, because the wired boundary condition contracts the $d_u$ clusters attached to each $\partial \bar T_n(v)$ to a single cluster, plus $1$ to include the cluster of the root $u$.
Then, adding the edges $uv$ for which~${\go_{uv}=1}$, this reduces the number of cluster by
\begin{itemize}
\item $\ind_{A_v^c}$ each time that $\go_{uv}=1$, since having $\go_{uv}$ connects two clusters that are not already connected through $\partial T_n(u)$ (which is wired);

\item one (only once) on the event $A_u = \bigcup_{v\leftrightarrow u} \{\go_{uv}=1\}\cap A_v$, since then the root $u$ is connected to $\partial T_n(u)$ so adding more than one open edge $\go_{uv}=1$ with $A_v$ will not decrease further the number of clusters.
\end{itemize}

\noindent
We refer to Figure~\ref{fig:recursion} for an illustration of the relation~\eqref{eq:relclusters}.

\begin{figure}
\begin{center}
\includegraphics[scale=1]{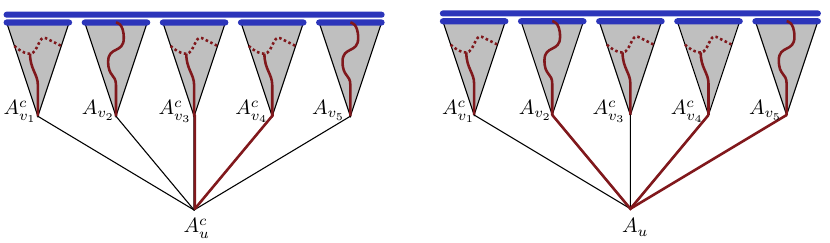}
\end{center}
\caption{\footnotesize Illustration of the identity~\eqref{eq:relclusters} that relates the numbers of clusters in $(\bar T_n(v))_{v\leftarrow u}$ to the number of clusters in $\bar T_n(u)$. 
We have illustrated two cases. 
On the left, the event $A_u$ is not verified \textit{i.e.}\ $u\not \leftrightarrow \partial T_n(u)$: there are open edges $uv$ (in thick red) only connecting to subtrees $\bar T_n(v)$ where $A_v$ is not verified, and each of these edges reduces the number of clusters by one.
On the right, the event $A_u$ is verified \textit{i.e.}\ $u \leftrightarrow \partial T_n(u)$: there are open edges $uv$ (in thick red) connecting to subtrees $\bar T_n(v)$ where $A_v$ is verified and all these open edges reduce the global number of clusters only by one (other open edges connecting to subtrees $\bar T_n(v)$ where $A_v$ is not verified still reduce the number of clusters each by one).
}
\label{fig:recursion}
\end{figure}

Let us now prove~\eqref{eq:recZ1}-\eqref{eq:recZ2}.
Starting from~\eqref{def:ZA} and since the event $ A_u^c$ can be written as $\bigcap_{v\leftarrow u} \big( \{\go_{uv}=0\} \cup ( \{\go_{uv}=1\} \cap A_v^c) \big)$, we have that
\[
\begin{split}
Z_u(A_u^c) &= \q^{2-d_u} \EE_p\Big[ \ind_{A_u^c} \prod_{v\leftarrow u} \q^{K_v} \q^{- \ind_{\{\go_{uv}=1\}} \ind_{A_v^c}}  \Big] \\
& =\q^{2-d_u} \EE_p\Big[ \prod_{v\leftarrow u} \q^{K_v} \big( \ind_{\{\go_{uv} =0\}} + \q^{-1} \ind_{\{\go_{uv}=1\}} \ind_{A_v^c}  \big)  \Big] \,,
\end{split}
\]
where we have also used the relation~\eqref{eq:relclusters} for the first identity.
Using the independence of the $\go_{uv}$ under $\PP_\p$, we get~\eqref{eq:recZ1}.

On the other hand, starting again from~\eqref{def:ZA} and the relation~\eqref{eq:relclusters}, we get that
\[
\q Z_u(A_u) = \q^{2-d_u} \EE_p\Big[ \ind_{A_u} \prod_{v\leftarrow u} \q^{K_v} \q^{- \ind_{\{\go_{uv}=1\}} \ind_{A_v^c}}  \Big]  \,.
\]
Writing $\ind_{A_u} = 1-\ind_{A_u^c}$, and recognizing the formula for $Z_u(A_u^c)$ from above, we get that 
\[
\begin{split}
\q Z_u(A_u) + Z_u(A_u^c)
& =\q^{2-d_u} \EE_p\Big[ \prod_{v\leftarrow u} \q^{K_v} \q^{- \ind_{\{\go_{uv}=1\}} \ind_{A_v^c}}  \Big] \\
& = \q^{2-d_u} \EE_p\Big[ \prod_{v\leftarrow u} \q^{K_v} \big( 1 + (\q^{-1}-1) \ind_{\{\go_{uv}=1\}} \ind_{A_v^c}  \big)  \Big] \,.
\end{split}
\]
Again, using the independence of the $\go_{uv}$ under $\PP_\p$, we get~\eqref{eq:recZ2}.

Now, from~\eqref{eq:recZ1}-\eqref{eq:recZ2}, noticing that $Z_u(A_u) =\pi_n(u) Z_u$ and $Z_u(A_u^c) = (1-\pi_n(u)) Z_u$, we get that
\begin{align*}
1- \pi_n(u) & = \q^{2-d_u} \frac{\prod_{v\leftarrow u} Z_v}{Z_u}  \times \prod_{v\leftarrow u} \Big( 1-\p+\p\q^{-1}  -  \p\q^{-1} \pi_n(v) \Big) \\
1+(\q-1)\pi_n(u)  & = \q^{2-d_u} \frac{\prod_{v\leftarrow u} Z_v}{Z_u} \times \prod_{v\leftarrow u} \Big( 1-\p+\p\q^{-1} + \p\q^{-1} (\q-1) \pi_n(v) \Big) \,,
\end{align*}
so that dividing the first line by the second one we get that
\[
\frac{1-\pi_n(u)}{1+(\q-1)\pi_n(u)} = \prod_{v\leftarrow u} \frac{1 - \gamma_{\p,\q} \pi_n(v)}{1+ \gamma_{\p,\q} (\q-1)  \pi_n(v)} \,,
\]
with $\gamma_{\p,\q} := \frac{\p\q^{-1}}{1-\p+\p\q^{-1}}  = \frac{\p}{\p+\q(1-\p)}$.
This concludes the proof of~\eqref{rec:pinu}.

Setting $b_n(u):=-\log \gp_\q(\pi_n(u))$, inverting the relation (and noticing that $\gp_\q^{-1} =\gp_\q$) we easily get that $\pi_n(u) = \gp_\q( \exp(-b_n(u)))  = \psi_\q(b_n(u)) $
\[
b_n(u) = \sum_{v\leftarrow u} -\log \gp_\q \big( \psi_\q (\gb) \psi_\q( b_n(v)) \big) \,,
\]
where we have also written $\gamma = \gp_{\q}(1-\p) = \psi_\q(\gb)$.
To conclude the proof of~\eqref{def:RCMrec}, it only remains to observe that $\psi_\q^{-1}(x) = - \log \gp_\q(x)$, since $\psi_\q(t) = \gp_\q(e^{-t})$ and $\gp_{\q}^{-1} = \gp_\q$.

\section{Branching processes with (truncated) heavy tails}
\label{app:tail}

This section is devoted to the proof of Proposition~\ref{prop:tailGW}.
Recall that we assume that 
\[
\bP(Z>x) \leq L(x) x^{-\alpha} \,, \qquad  \bP(\tilde Z>x) \leq L(x) x^{-\alpha} \ind_{\{x<t\}} \,,
\]
where $t$ is a fixed truncation parameter that we assume to be large.

\smallskip
We start with the proof of the first inequality in Proposition~\ref{prop:tailGW}: there is a constant $c>0$
\begin{equation}
\label{eq:tail1}
\bP(W_{\ell} >x) \leq c L(x) x^{-\alpha} \qquad \text{ for all } x\geq 1 \,.
\end{equation}
(We recall that this inequality could be deduced from the proof of~\cite{DKW13}, but we provide here a self-contained proof for completeness.)
We start with a first general lemma --- then, the proof relies on an inductive use of this lemma.

\begin{lemma}
\label{lem:tail}
Let $(X_i)_{i\geq 0}$ be independent non-negative random variables, with common mean $\bE[X_i]=1$ and which all satisfy
\begin{equation}
\label{eq:condtail}
\bP(X_i>x) \leq \kappa_1 L(x) x^{-\alpha} \,, \qquad \text{ for } x\geq 1 \,.
\end{equation}
Let $N$ be a $\NN$-valued random variable with mean $\mu>1$, independent of the $X_i$'s.
Then there is some constant $c>0$ and $\delta>0$ (depending only on $\kappa_1$, $\alpha$ and $L(\cdot)$) such that, for $x\geq 1$,
\[
\bP\Big(\frac{1}{\mu} \sum_{i=1}^{N} X_i >x  \Big) \leq  \bP\big(  N >  (1-\mu^{-\delta}) \mu x \big) + c  \mu^{-\frac12 \alpha} \bP\big( N> \tfrac12 \mu x\big) +  c \mu^{- \frac12 (\alpha-1)}  L(x) x^{-\alpha}  \,.
\]
\end{lemma}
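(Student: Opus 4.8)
The idea is to split the sum $\frac1\mu\sum_{i=1}^N X_i$ according to whether the number of summands $N$ is large or small, and within the "small $N$" regime, whether any individual $X_i$ is large. I would first handle the trivial contribution: on the event $\{N > (1-\mu^{-\delta})\mu x\}$ there is nothing to prove, since this event is one of the three terms on the right-hand side. So I condition on $N \le (1-\mu^{-\delta})\mu x =: n_0$ and must bound $\bP\big(\sum_{i=1}^N X_i > \mu x,\ N \le n_0\big)$.

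On this event, write $S_N = \sum_{i=1}^N X_i$. I would use a one-big-jump / truncation decomposition: let $M := \mu^{1/2} x$ (or some comparable threshold) and split each $X_i = X_i\ind_{\{X_i \le M\}} + X_i \ind_{\{X_i > M\}}$. The contribution of the "big" parts is controlled by a union bound: the probability that at least one of the (at most $n_0 \le \mu x$) variables exceeds $M$ is at most $n_0 \cdot \kappa_1 L(M) M^{-\alpha}$. Using $L$ slowly varying and $M = \mu^{1/2}x \ge x$, one gets $L(M)M^{-\alpha} \le c\, \mu^{-\alpha/2} L(x) x^{-\alpha}$ (up to a Potter-bound correction absorbed by slightly shrinking the exponent, which is where the freedom in $\delta$ and in "$\frac12\alpha$ vs $\alpha$" is used); so this term is $\le c\,\mu x \cdot \mu^{-\alpha/2} L(x)x^{-\alpha}$. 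Actually, to match the stated bound $c\mu^{-\alpha/2}\bP(N > \frac12\mu x)$ — wait, that term allows $N$ large; so more carefully: on the big-jump event I would *not* assume $N$ small. That is, the second term $c\mu^{-\alpha/2}\bP(N>\frac12\mu x)$ should absorb the case where $S_N > \mu x$ happens because $N > \frac12\mu x$ is itself large, and separately the genuine one-big-jump case contributes $c\mu^{-(\alpha-1)/2}L(x)x^{-\alpha}$ after summing the union bound over $i \le \frac12\mu x$ (giving one power of $\mu x$, hence $\mu^{1-\alpha/2}$, i.e. $\mu^{-(\alpha-1)/2}$, times $L(x)x^{-\alpha}$). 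This bookkeeping — deciding exactly which sub-event feeds which of the three output terms — is the part requiring care.

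Finally, on the "all small" event $\{X_i \le M\ \forall i \le N\} \cap \{N \le \frac12\mu x\}$, I would bound the truncated sum $\sum_{i=1}^N X_i\ind_{\{X_i\le M\}}$ by a Chernoff / exponential Markov argument, or more simply by Fuk–Nagaev-type estimates for sums of independent bounded variables: since $\bE[X_i]=1$ and $N \le \frac12\mu x$, the truncated sum has mean $\le \frac12\mu x$, so we need a deviation of order $\frac12\mu x$ above the mean, with increments bounded by $M = \mu^{1/2}x$. A Bernstein/Fuk–Nagaev bound gives a probability $\le \exp(-c\,\mu x / M) + (\text{polynomial tail term})$; with $M = \mu^{1/2}x$ the exponential becomes $\exp(-c\mu^{1/2})$, which is $o(\mu^{-\alpha/2}L(x)x^{-\alpha})$ uniformly — wait, not uniformly in $x$, since $x^{-\alpha}$ can be small. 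Here I would instead choose the truncation level $M$ as a function balancing the two contributions: roughly $M \asymp \mu x / \log$ something, or simply use the Fuk–Nagaev polynomial term $n_0 \bE[X_i^\alpha\wedge \cdot]/(\mu x)^\alpha$-type bound directly, which already has the shape $L(x)x^{-\alpha}$ times a negative power of $\mu$. The cleanest route is: bound $\bP(S_N > \mu x,\ \text{all }X_i\le M)$ by $\bP(\text{some }X_i\in(1,M])$ contributions plus a pure large-deviation term, choosing $M$ so that the exponential term is dominated by $\mu^{-(\alpha-1)/2}L(x)x^{-\alpha}$; the slowly-varying bookkeeping (Potter bounds, $L(\mu x)\le c_\epsilon \mu^\epsilon L(x)$) is then routine. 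The main obstacle is precisely this last step: getting a large-deviation bound for the truncated independent sum whose polynomial (or exponential) tail has the correct *joint* dependence on both $x$ and $\mu$, matching the third term $c\mu^{-(\alpha-1)/2}L(x)x^{-\alpha}$, rather than a cruder bound with the wrong power of $\mu$.
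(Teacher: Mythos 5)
Your overall plan (peel off the trivial event $\{N > (1-\mu^{-\delta})\mu x\}$, then use a one‑big‑jump decomposition on the rest) is the right shape, and it is indeed what the paper does; but the implementation has a genuine gap. The paper conditions on $N=\ell$ and then invokes a ready-made one-big-jump estimate for i.i.d.\ sums (Nagaev, Denisov--Dieker--Shneer, or the version in~\cite{Ber19b}): uniformly for $\mu x - \ell \geq \ell^{1-\delta}$,
\[
\bP\Big(\sum_{i=1}^\ell X_i > \mu x\Big) \leq c\,\ell\, L(\mu x - \ell)\,(\mu x-\ell)^{-\alpha}\,,
\]
and the three output terms then come from the three ranges $\ell\in[1,\tfrac12\mu x]$ (summed against $\bE[N]=\mu$, Potter's bound turns $L(\mu x)(\mu x)^{-\alpha}$ into $\mu^{-(\alpha-1)/2}L(x)x^{-\alpha}$), $\ell\in(\tfrac12\mu x,(1-\mu^{-\delta})\mu x]$ (factor out $\bP(N>\tfrac12\mu x)$ and bound the worst-case sum at $\ell_2$), and $\ell > (1-\mu^{-\delta})\mu x$ (trivially $\bP(N>\ell_2)$).

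The gap in your version is the choice of truncation level $M=\mu^{1/2}x$ together with the crude union bound $\bP(\exists\, i\leq n_0: X_i>M)\leq n_0\kappa_1 L(M)M^{-\alpha}$. With $n_0\asymp\mu x$ this gives $\asymp \mu^{1-\alpha/2}\,x^{1-\alpha}L(x)$, which overshoots the target $\mu^{-(\alpha-1)/2}L(x)x^{-\alpha}$ by a factor of order $\mu^{1/2}x$: your arithmetic ``$\mu^{1-\alpha/2}$, i.e.\ $\mu^{-(\alpha-1)/2}$'' is off, since $1-\alpha/2 = -(\alpha-2)/2 \neq -(\alpha-1)/2$, and the extra factor $x$ is not removable from a bare union bound over $\{X_i>M\}$. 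The point of the one-big-jump principle here is that for $\ell\leq \tfrac12\mu x$ the event $\{S_\ell>\mu x\}$ forces a single $X_i$ of size $\gtrsim\tfrac12\mu x$ (not merely $\gtrsim\mu^{1/2}x$), so the relevant tail is $L(\mu x)(\mu x)^{-\alpha}$, which is small enough; one cannot decouple ``some $X_i$ is moderately large'' from ``$S_\ell>\mu x$'' without losing a power of $\mu x$. Likewise the truncated/Bernstein part that you leave open is exactly what the cited Nagaev/Fuk--Nagaev packaging handles for you, uniformly in $\ell$, once you first fix $N=\ell$. I would recommend restructuring as the paper does: split on the value of $\ell$ first (three ranges), then apply a single citable one-big-jump estimate for each $\ell$, rather than doing a hand-rolled truncation with a threshold chosen before conditioning on~$N$.
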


\begin{proof}
Let $x\geq 1$ and let us set $\ell_1 := \frac12 \mu x$ and $\ell_2= (1 - \mu^{-\delta}) \mu x$ with $\delta>0$ fixed such that $\delta < \frac12 \wedge \frac{\alpha-1}{2\alpha}$.
Then, we can split the probability as
\[
\begin{split}
\bP \Big(\frac{1}{\mu} \sum_{i=1}^{N} X_i >x  \Big) & = \bigg( \sum_{\ell=1}^{ \ell_1} +  \sum_{\ell= \ell_1 +1}^{\ell_2}+ \sum_{\ell> \ell_2} \bigg) \bP(N=\ell) \bP\Big( \sum_{i=1}^{\ell} X_i > \mu x \Big) \\
&  =: \ T_1\ +\ T_2\ +\ T_3 \,.
\end{split}
\]

Notice already that for the last term $T_3$, by definition of $\ell_2$, we simply have
\[
T_3 \leq \bP\big( N >  \ell_{2} \big)  = \bP\big(  N > (1-\mu^{-\delta}) \mu x \big) \,.
\]
We now treat the remaining two terms. 
We use the following so-called one-big-jump behavior for sums of heavy tailed random variables, see e.g.\ \cite{Nagaev79,DDS08} (or \cite[Thm.~5.1 \& Eq. (5.1)]{Ber19b} for a more convenient formulation, close to~\eqref{eq:onebigjump} below).
We have the following statement: assuming~\eqref{eq:condtail}, there is a constant $c>0$ such that, uniformly for $\ell$ such that $\mu x-\ell \geq \ell^{1-\delta}$, we have
\begin{equation}
\label{eq:onebigjump}
\bP\Big( \sum_{i=1}^{\ell} X_i > \mu x \Big) = \bP\Big( \sum_{i=1}^{\ell} (X_i -\bE[X_i]) > \mu x-\ell \Big)  \leq c \ell\, \kappa_1 L(\mu x-\ell) (\mu x-\ell)^{-\alpha} \,.
\end{equation}

\smallskip
\noindent
{\it Term $T_1$.}
Using~\eqref{eq:onebigjump}, recalling that $\ell_1=\frac12 \mu x$ so that in particular $\mu x- \ell \geq \frac12 \mu x$, the first term is bounded by
\[
T_1 \leq c  L(\mu x) (\mu x)^{-\alpha} \sum_{\ell=1}^{ \frac12 \mu x }  \ell \bP(N=\ell) \leq c' \mu^{-\alpha+\delta}  L(x) x^{-\alpha}  \bE[N] \,,
\]
using Potter's bound \cite[Thm. 1.5.6]{BGT89}.
Since $\bE[N]=\mu$ and since we chose $\delta < \frac{1}{2} (\alpha-1)$, we get that
\[
T_1\leq c  \mu^{-\frac12 (\alpha-1)} L(x) x^{-\alpha} \,.
\]

\smallskip
\noindent
{\it Term $T_2$.}
Using~\eqref{eq:onebigjump}, recalling that $\ell_1=\frac12 \mu x$ and $\ell_2 = (1-\mu^{-\delta}) \mu x$ (which verifies $\mu x -\ell_2 = \mu^{1-\delta} x\geq (\mu x)^{1-\delta} \geq \ell_2^{1-\delta}$), we get that
\[
T_2\leq 
\bP\big( N >  \ell_1  \big)
\bP\Big( \sum_{i=1}^{\ell_2} X_i > \mu x \Big) \leq \bP\big(  N >  \tfrac12 \mu x  \big) \times c \kappa_1 L((\mu x)^{1-\delta}) (\mu x)^{-(1-\delta)\alpha} \,.
\]
All together, using again Potter's bound \cite[Thm. 1.5.6]{BGT89} and since $\delta <1/2$ and $x\geq 1$ we end up with
\[
T_2 \leq c \mu^{-\alpha/2}\bP\big( N > \tfrac12  \mu x  \big) \,.
\]
This concludes the proof.
\end{proof}

Let us now obtain~\eqref{eq:tail1} simply by iterating Lemma~\ref{lem:tail}; note that our assumption is that~$W_1$ satisfies the tail condition~\eqref{eq:condtail}.
We write that $W_{\ell+1} = \frac{1}{m^{\ell}} \sum_{i=1}^{Z_{\ell}} W_1^{(i)}$, where $(W_1^{(i)})_{i\geq 1}$ are i.i.d.\ non-negative random variables with mean one, independent of $Z_{\ell}$.
Let $c_\ell:=\prod_{i=1}^{\ell} (1-m^{-\delta\ell})^{-1}$ so that we have $c_{\ell+1}(1-m^{-\delta\ell}) =c_{\ell}$.
Then, applying Lemma~\ref{lem:tail} and noting that $\mu =\bE[Z_{\ell}]=m^{\ell}$, we get that for any $x\geq 1$
\begin{equation}
\label{eq:iterationWell}
\begin{split}
\bP\big( W_{\ell+1} \geq c_{\ell+1} x \big) & \leq \bP\big( W_{\ell} \geq c_{\ell} x \big) + c  m^{-\frac12 (\alpha-1)\ell} L(x) x^{-\alpha} +c m^{- \alpha\ell/2} \bP\big(W_{\ell} \geq  \tfrac12 c_{\ell} x\big) \,,  
\end{split}
\end{equation}
where we also used that $c_{\ell}$ is bounded by a universal constant $c_{\infty} <+\infty$ to get the bound $L(c_{\ell+1} x) (c_{\ell+1}x)^{-\alpha} \leq c L(x) x^{-\alpha}$.
Iterating~\eqref{eq:iterationWell}, we get 
\[
\bP\big( W_{\ell} \geq c_{\ell} x \big)  
\leq \kappa_{\ell}  L(x)x^{-\alpha} \,,
\]
with $\kappa_{\ell+1} = \kappa_{\ell} + c  m^{-\frac12 (\alpha-1)\ell} + c' \kappa_{\ell} m^{-\alpha\ell/2}$.
All together, since $\kappa:=\sup_{\ell \geq 1} \kappa_{\ell} <+\infty$, and $c_{\infty} := \sup_{\ell\geq 1}c_{\ell} <+\infty$, we get~\eqref{eq:tail1} (up to a change in the constants).

\smallskip
We now turn to the second inequality of Proposition~\ref{prop:tailGW}, on the martingale $\tilde W_{\ell}$ associated with the truncated branching process.
First, notice that we also have $\bP(\tilde Z>x) \leq c L(x) x^{-\alpha}$ for all $x\geq 1$, so by the first part of the Proposition we have that 
$\bP(\tilde W_{\ell} >x) \leq c L(x) x^{-\alpha}$, uniformly in $\ell\geq 1$.
It therefore remains to prove the last inequality, \textit{i.e.} that there is a constant $c'>0$ such that, provided that $t$ is large enough,
\begin{equation}
\label{eq:tail2}
\bP(\tilde W_{\ell} > x) \leq t^{- c' x/t} \qquad \text{ for all } x\geq t\,,
\end{equation}
uniformly in $\ell\geq 1$.
For this, we use the following standard Chernov's bound: for any $\lambda>0$, we have
\[
\bP(\tilde W_{\ell} > x) \leq e^{-\lambda x} \bE\big[ e^{\lambda \tilde W_{\ell}} \big] \,.
\]
Then, our next task is to bound the Laplace transform of $\tilde W_{\ell}$, uniformly in $\ell$.
We prove that there exists some $c >0$ such that, for all $\ell\geq 1$, provided that $t$ is large enough,
\begin{equation}
\label{eq:Laplace}
\bE\big[ e^{ c \frac{\log t}{t} \, \tilde W_{\ell}} \big] \leq 2 \,.
\end{equation}
Plugged into the above and choosing $\lambda = \frac{c}{t} \log t$, we end up with $\bP(\tilde W_{\ell} \geq x) \leq 2 t^{-c x/t}$ for all $x\geq 1$, which proves~\eqref{eq:tail2}.
It remains to prove~\eqref{eq:Laplace}, and we rely on the following Lemma.
\begin{lemma}
\label{lem:tailtrunc}
Let $(X_i)_{i\geq 0}$ be independent non-negative random variables, with common mean $\bE[X_i]=1$. Let $y>1$ and assume that $\bP(X_i\in [0,t])=1$ and let $\sigma^2(t)$ be such that $\bE[X_i^2]\leq \sigma^2(t)$ for all~$i$.
Let $N$ be a $\NN$-valued random variable with mean $\mu>0$, independent of the $X_i$'s.
Then, for all $\lambda >0$,
\[
\bE\bigg[ \exp\bigg(  \frac{\lambda}{\mu} \sum_{i=1}^N X_i \bigg) \bigg] \leq  \bE\bigg[ \exp\bigg( \Big(1+ \frac{\lambda}{\mu} \sigma^2(t) e^{t\lambda/\mu} \Big) \times  \frac{\lambda}{\mu}N \bigg) \bigg]\,.
\]
\end{lemma}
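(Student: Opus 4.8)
\textit{Proof proposal.} The plan is to condition on $N$ and exploit the independence of the $X_i$'s, thereby reducing the statement to a one-variable Laplace-transform bound for a single bounded nonnegative random variable with prescribed first and second moments.

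First I would record the elementary inequality that for every $s>0$ and every $x\in[0,t]$,
\[
e^{sx} \leq 1 + sx + \frac{(sx)^2}{2}\,e^{st} \,,
\]
which follows by writing $e^{sx}-1-sx=\sum_{k\geq 2}(sx)^k/k!$, using $k!\geq 2\,(k-2)!$ for $k\geq 2$, and bounding $sx\leq st$ in the resulting geometric-type series. Taking expectations and using $\bE[X_i]=1$ together with $\bE[X_i^2]\leq\sigma^2(t)$ gives, uniformly in $i$,
\[
\bE\big[e^{sX_i}\big] \leq 1 + s + \frac{s^2}{2}\,\sigma^2(t)\,e^{st} \leq \exp\!\Big( s\big(1+s\,\sigma^2(t)\,e^{st}\big)\Big) \,,
\]
where the last step uses $1+u\leq e^u$ and discards the harmless factor $\tfrac12$ in the quadratic term.

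Next, by independence of the $X_i$'s, for any fixed $\ell\in\NN$ we have
\[
\bE\Big[\exp\Big(s\sum_{i=1}^\ell X_i\Big)\Big] = \prod_{i=1}^\ell \bE\big[e^{sX_i}\big] \leq \exp\!\Big(\ell\,s\big(1+s\,\sigma^2(t)\,e^{st}\big)\Big) \,.
\]
Since $N$ is independent of the $X_i$'s, conditioning on $N$ and taking expectations yields
\[
\bE\Big[\exp\Big(s\sum_{i=1}^N X_i\Big)\Big] \leq \bE\Big[\exp\!\Big(N\,s\big(1+s\,\sigma^2(t)\,e^{st}\big)\Big)\Big] \,,
\]
and specializing $s=\lambda/\mu$, so that $st=t\lambda/\mu$, gives exactly the claimed inequality. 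The argument is entirely routine; the only point requiring a little care is the scalar bound on $\bE[e^{sX_i}]$ — one must make sure it is expressed solely in terms of $\bE[X_i]$, $\bE[X_i^2]$ and the truncation level $t$, so that it is uniform in $i$ and in $\ell$ and can therefore be carried through the conditioning on $N$.
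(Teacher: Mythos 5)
Your proof is correct and follows essentially the same route as the paper's: condition on $N$, bound $\bE[e^{sX_i}]$ via a Taylor-type inequality for $e^{sx}$ on $[0,t]$ (you carry the sharper $\tfrac12$ factor before discarding it, the paper omits it from the start), and finish with $1+u\leq e^u$.
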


\begin{proof}
First of all, notice that taking first the conditional expectation with respect to~$N$, we obtain
\[
\bE\Big[ \exp\Big( \lambda \frac{1}{\mu} \sum_{i=1}^N X_i \Big) \Big]
= \bE\bigg[ \prod_{i=1}^N \bE\Big[ e^{\frac{\lambda}{\mu} X_i} \Big]  \bigg] \,.
\]
We now control the term 
\begin{equation}
\label{eq:expandLaplace}
\bE\Big[ e^{\frac{\lambda}{\mu} X_i} \Big] \leq 1 + \frac{\lambda}{\mu} + \frac{\lambda^2}{\mu^2} \bE[X_i^2] e^{\frac{\lambda}{\mu} t} \leq \exp\Big( \frac{\lambda}{\mu} + \frac{\lambda^2}{\mu^2} \bE[X_1^2] e^{\lambda t/\mu} \Big) \,,
\end{equation}
where we used that $e^{x} \leq 1+x+ x^2 e^{t x}$ for all $x\in[0, t]$ and the fact that $\bE[X_i] =1$.
Bounding $\bE[X_i^2]\leq \sigma^2(t)$ and combined with the previous identity, this gives the desired result.
\end{proof}

We are now ready to conclude the proof of~\eqref{eq:Laplace}, using Lemma~\ref{lem:tailtrunc} iteratively with the recurrence relation
$\tilde W_{\ell+1} = \frac{1}{\tilde m^\ell} \sum_{i=1}^{\tilde Z_{\ell}} \tilde W_{1}^{(i)}$.
Recalling that $\tilde W_1 \in[0, t]$ almost surely, Lemma~\ref{lem:tailtrunc} then gives that, for any $\lambda>0$,
\begin{equation}
\label{eq:forLaplace}
\bE\Big[ e^{\lambda \tilde W_{\ell+1}} \Big] \leq \bE\Big[ e^{ (1+ \tilde m^{-\ell}\gep_\lambda)\, \lambda \tilde W_{\ell}} \Big] \,,
\end{equation}
with $\gep_\lambda := \lambda \bE[(\tilde W_1)^2] e^{\lambda t}$ (bounding also $e^{\lambda t/\tilde m^\ell} \leq e^{\lambda t}  $).

Using the fact that we have $\bP(\tilde Z >x) \leq c_2 L(x)x^{-\alpha} \ind_{\{x < t\}}$, we obtain that: if $\alpha\in (1,2)$, then $\bE[(\tilde W_1)^2]\leq c_2' L(t) t^{2-\alpha}$; if $\alpha \geq 2$, then $\bE[\tilde Z]\leq \hat L(t_n)$ for $\alpha \geq 2$, for some slowly varying function (a constant if $\alpha>2$). 
We can therefore write that 
\[
\gep_{\lambda} \leq c \lambda \hat L(t) t_n^{2- \alpha\wedge2} e^{\lambda t} \,,
\]
for some slowly varying function $\hat L(\cdot)$.
Now, if we take $\lambda \leq c_{\alpha} \frac{\log t}{t}$ with a constant $c_{\alpha}:=\frac12 (\alpha\wedge2 -1)>0$, we get that $\gep_{\lambda} \leq c_{\alpha} \log t \hat L(t) t^{-2c_{\alpha}} t^{c_{\alpha}}$, so in particular $\tilde \gep_{\lambda} \leq 1$, provided that~$t$ is large enough.

Let us fix $\lambda = \delta \frac{\log t}{t}$ with $\delta := c_{\alpha} \prod_{i\geq 1}(1+\tilde m^{-i})^{-1}$, and let us define $(\lambda_k)_{1\leq k \leq \ell}$ by setting $\lambda_1=\lambda$ and $\lambda_{k+1}:= (1+ \tilde m^{k-\ell}) \lambda_{k}$ for $1\leq k\leq \ell-1$. Notice that $\lambda_k \leq \lambda_{\ell} \leq c_{\alpha} \frac{\log t}{t}$; in particular $\gep_{\lambda_{k}} \leq 1$ for all $k\in \{1,\ldots, \ell\}$.
Applying~\eqref{eq:forLaplace} iteratively, we then get that 
\[
\bE\Big[ e^{\lambda  \tilde W_{\ell}} \Big]=
\bE\Big[ e^{\lambda_1  \tilde W_{\ell}} \Big] \leq \bE\Big[ e^{\lambda_2 \tilde W_{\ell-1}} \Big] \leq \cdots \leq \bE\Big[ e^{\lambda_{\ell} \tilde W_{1}} \Big] \,.
\]
Similarly to~\eqref{eq:expandLaplace}, this last expression is bounded by
\[
1+ \lambda_\ell + \lambda_\ell^2 \bE[(\tilde W_1)^2] e^{\lambda_\ell t} \leq 1+ \lambda_\ell (1+\gep_{\lambda_\ell}) \leq 1+2\lambda_\ell  \leq 2\,,  
\]
using again that $\gep_{\lambda_{\ell}}\leq 1$ and then that $\lambda_\ell \leq c_{\alpha} \frac{\log t}{t} \leq 1$ if $t$ is large enough.
This concludes the proof of~\eqref{eq:Laplace} and thus of the second part of Proposition~\ref{prop:tailGW}.
\qed

\end{appendix}

\paragraph*{Acknowledgement} We would like to thank Remco van der Hofstad, Piet Lammers, Arnaud Le Ny, Yueyun Hu and Raphaël Rossignol for several interesting and enlightening discussions on the subject; we also thank Christina Goldschmidt and Markus Heydenreich for communicating to us that they obtained the (same) constants in Theorem~\ref{thm:RCMcritical} in an unpublished work.
Finally, we are grateful for the insightful comments of two anonymous referees, who helped us improve the presentation of the paper.
Q. Berger acknowledges the support of grant ANR-22-CE40-0012.

\bibliographystyle{abbrv} 
\bibliography{biblio.bib}

\end{document}